\documentclass[a4paper,11pt]{amsart}
\usepackage[all]{xy}
\usepackage{graphicx,amsmath,amssymb,amsthm,amscd,mathrsfs}

\DeclareMathOperator{\supp}{supp}

\DeclareMathOperator{\pr}{pr}

\DeclareMathOperator{\ord}{ord}

\newtheorem{thm}{Theorem}[section]
\newtheorem{df}[thm]{Definition}
\newtheorem{prop}[thm]{Proposition}

\newtheorem{cor}[thm]{Corollary}
\newtheorem{rem}[thm]{Remark}

\newtheorem{exa}[thm]{Example}
\title[Moduli of connections, quivers, and integrable 
deformations]{Moduli spaces of meromorphic connections, quiver varieties, and 
integrable deformations}
\if0
\begin{volumeauthor}
	\name{Kazuki Hiroe}
	\address{Department of Mathematics, Josai University\\
1-1 Keyakidai Sakado-shi Saitama 350-0295 JAPAN}
	\email{kazuki@josai.ac.jp}
\end{volumeauthor}

\author{Kazuki Hiroe \thanks{E-mail
\texttt{kazuki@josai.ac.jp}\\
Department of Mathematics, Josai University\\
1-1 Keyakidai Sakado-shi Saitama 350-0295 JAPAN.\\
The author is supported by JSPS Grant-in-Aid for Young Scientists (B)
Grant Number 26800072.}
}
\keywords{Moduli space of meromorphic connections, Middle convolution, quiver varieties, isomonodromic deformation}
\subjclass{33E17, 34M56, 34M25, 16G20, 32G34}
\fi
\begin{document}
\title[]{Moduli spaces of meromorphic connections, quiver varieties, and 
integrable deformations}
\date{}
\author{Kazuki Hiroe}
\email{kazuki@josai.ac.jp}
\address{Department of Mathematics, Josai University\\ 
1-1 Keyakidai Sakado-shi Saitama 350-0295 JAPAN.}

\maketitle
\begin{abstract}
	This is a note in which we first 
	review symmetries of moduli spaces of stable
	meromorphic 
connections on trivial vector bundles over the Riemann sphere,
and next discuss symmetries 
of their integrable deformations as an application.
We shall give an interpretation of 
the list of 4-dimensional Painlev\'e type equations given by 
Kawakami-Nakamura-Sakai from our classification of 4-dimensional 
moduli spaces of connections.
In the study of the symmetries, a realization 
of the moduli spaces as quiver varieties is given and 
plays an essential role.
\end{abstract}
\section*{Introduction}
This note is designed to review symmetries of moduli spaces of meromorphic 
connections on trivial vector bundles over $\mathbb{P}^{1}$ and 
give an application of these symmetries for their isomonodromic deformations.
In the series of works by Okamoto  \cite{Okam}, it was clarified that 
Painlev\'e equations have affine 
Weyl group symmetries. After these pioneering works, many studies 
of symmetries of Painlev\'e type equations are successfully developed
in connection with the algebraic geometry, 
representation theory of affine Lie algebras and so on (see 
Noumi and Yamada \cite{NY}, Sakai \cite{Sak0}, Sasano \cite{Sas},
Boalch \cite{Bo}
and their references for instance).
On the other hand, 
the recent work of Kawakami, Nakamura and Sakai \cite{KNS} suggests
that many known Painlev\'e type equations 
are 
uniformly obtained from isomonodormic deformations of linear ordinary differential 
equations.
In this note, inspired by their work, we shall introduce a study
of symmetries of isomonodromic deformations from those of 
moduli spaces of meromorphic connections.

In the first section 
we shall explain some relationship among moduli spaces 
$\mathfrak{M(\mathbf{B})}$ (see \S \ref{moduli} for the 
definition) of 
stable meromorphic connections on trivial bundles with at most 
unramified 
irregular singularities over $\mathbb{P}^{1}$,
quiver varieties $\mathfrak{M}_{\lambda}^{\text{reg}}(\mathsf{Q},\alpha)$ 
(see \S \ref{Quiver}) and integrable deformations.
One of  the most remarkable facts which will be introduced there is the 
following.
\begin{thm}[Theorem 5.14 in \cite{H2}]
	For any moduli space $\mathfrak{M(\mathbf{B})}$,
		there exists a quiver $\mathsf{Q}$, a dimension vector 
		$\alpha$ and a complex parameter $\lambda$ such that 
		we have an open embedding 
		\[
			\mathfrak{M}(\mathbf{B})\hookrightarrow 
			\mathfrak{M}_{\lambda}^{\text{reg}}(
			\mathsf{Q},\alpha).
		\]
\end{thm}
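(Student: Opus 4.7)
The plan is to construct an explicit map from $\mathfrak{M}(\mathbf{B})$ into a space of quiver representations by reading off, at each singular point, a flag refining the formal normal form of the connection, and then to identify the image as the open locus of representations satisfying a Crawley-Boevey-type regularity condition.

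First, I would fix a representative $\nabla = d - A(z)\, dz$ for a class in $\mathfrak{M}(\mathbf{B})$, where $A(z)$ is rational on $\mathbb{P}^{1}$ with prescribed principal parts at each singular point $p_{i}$. Because the singularities are unramified, the Hukuhara--Turrittin--Levelt decomposition produces at each $p_{i}$ a splitting $V = \bigoplus_{\alpha} V_{p_{i},\alpha}$ labelled by distinct irregular types (polynomials in $1/(z-p_{i})$), together with a further refinement by the eigenspaces of the exponent of formal monodromy. This iterated decomposition is exactly the data recorded by the spectral type $\mathbf{B}$, and it determines a filtration of $V$ whose successive subquotients have fixed dimensions.

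Next, I would build the quiver $\mathsf{Q}$ in the style of Crawley-Boevey, Boalch and Yamakawa: take a central vertex of dimension $n=\dim V$ and attach one leg per singular point, whose inner vertices correspond to successive refinements of the HTL decomposition at $p_{i}$, with dimensions given by the cumulative multiplicities along the filtration. The dimension vector $\alpha$ is prescribed by these multiplicities, and the parameter $\lambda$ is chosen so that its component at each outer vertex equals the appropriate difference of eigenvalues of the irregular type or of the formal monodromy, while its component at the central vertex records the trace data of the residues. The map $\mathfrak{M}(\mathbf{B})\to \mathfrak{M}_{\lambda}(\mathsf{Q},\alpha)$ then sends $\nabla$ to the representation whose arrows are the inclusions of the successive filtration subspaces at each $p_{i}$ together with a choice of splittings; the moment-map relation at the central vertex reproduces precisely the no-pole-at-$\infty$ condition $\sum_{i} \mathrm{Res}_{p_{i}} A(z)\, dz = 0$.

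Finally, I would show that this map is an open embedding onto $\mathfrak{M}_{\lambda}^{\text{reg}}(\mathsf{Q},\alpha)$. Injectivity modulo simultaneous $\mathrm{GL}(V)$-gauge follows because one can recover every polar part of $A(z)$ from the flag data at $p_{i}$ together with the prescribed eigenvalues in $\lambda$. The main obstacle will be identifying the image precisely and verifying openness: one must show that the regularity condition (absence of subrepresentations supported away from the central vertex) is equivalent to the flag data coming from a genuine HTL decomposition of some connection, and that every regular representation arises in this way. I would argue this by inverting the construction leg-by-leg, using the injectivity/surjectivity guaranteed by regularity along each leg to produce the required direct-sum splittings, and then reassembling the local pieces into a rational matrix $A(z)$ with the prescribed formal type, whose gauge class lies in $\mathfrak{M}(\mathbf{B})$.
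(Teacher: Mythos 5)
Your overall strategy (encode the Hukuhara--Turrittin--Levelt data at each singular point as a quiver representation and identify the image inside a quiver variety) is the same as the paper's, and your treatment of the legs and of the moment-map relation reproducing $\sum_{i}\mathrm{Res}_{a_i}A\,dz=0$ is consistent with it. However, there are two genuine gaps.

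First, the quiver you build is only correct in the Fuchsian case. At an unramified irregular singular point the HTL normal form is a direct sum of $m^{(i)}$ blocks with distinct irregular types $q^{(i)}_{j}$; this is a splitting, not a filtration, so it cannot be encoded by a single linear leg attached to one central vertex. The paper's quiver has one vertex $[i,j]$ for each irregular type at each $i\in I_{\text{irr}}$, with $d_{i}(j,j')=\deg(q^{(i)}_{j}-q^{(i)}_{j'})-2$ arrows between $[i,j]$ and $[i,j']$, a complete set of arrows joining the vertex group at $\infty$ to the vertex groups at the other irregular points, and legs attached to each $[i,j]$ recording only the eigenvalue filtration of the residue block $R^{(i)}_{j}$. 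Moreover the image is cut out by the extra open condition that the block matrices $\left(x_{\rho^{[0,j]}_{[i,j']}}\right)$ be invertible, so once there is more than one irregular point the map is not onto $\mathfrak{M}^{\text{reg}}_{\lambda}(\mathsf{Q},\alpha)$; the paper claims surjectivity only when $I_{\text{irr}}=\{0\}$.

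Second, and more seriously, your claim that stability of the connection translates into Crawley-Boevey regularity (irreducibility) of the representation fails in general. A $\nabla$-invariant subbundle corresponds only to subrepresentations whose dimension vector lies in the sublattice $\mathcal{L}$ (equal totals over the vertex groups of the different irregular points), so stability of the connection yields only $\mathcal{L}$-irreducibility, which is strictly weaker than irreducibility when there are at least two irregular singular points. Consequently the natural image $\mathfrak{M}_{\lambda}(\mathsf{Q},\alpha)^{\text{dif}}$ need not be contained in $\mathfrak{M}^{\text{reg}}_{\lambda}(\mathsf{Q},\alpha)$ at all. The paper repairs this by composing with an addition $\mathrm{Add}_{\mathbf{v}}$: one chooses $\mathbf{v}\in\mathbb{C}^{p}$ so that for $\lambda'=\lambda+\bar{\mathbf{v}}$ every $0\le\beta\le\alpha$ with $\lambda'\cdot\beta=0$ automatically lies in $\mathcal{L}$ (Lemma 5.9 of \cite{H2}), whence $\mathcal{L}$-irreducibility does imply irreducibility and the image lands in the regular part. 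Without this genericity step your argument does not produce an embedding into the \emph{regular} quiver variety, which is the content of the theorem.
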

Here the embedding is an isomorphism if the number of irregular singular points
is less than or equal to 1.
This embedding is found  
by Crawley-Boevey \cite{C} when 
all singular points are regular singular, by 
Boalch \cite{Boarx} and  Yamakawa with the author 
\cite{HY} when only one singular point is irregular singular and the others 
are regular singular, and 
by the author \cite{H2} for general cases.

From the geometry of quiver varieties, the above embedding leads us to a natural 
proof of the fact:
\begin{thm}[Corollary 5.15 in \cite{H2}]
	The moduli space $\mathfrak{M}(\mathbf{B})$ has a 
		structure as a connected complex symplectic manifold.
\end{thm}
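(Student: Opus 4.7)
The plan is to exploit the open embedding $\mathfrak{M}(\mathbf{B}) \hookrightarrow \mathfrak{M}_{\lambda}^{\text{reg}}(\mathsf{Q},\alpha)$ supplied by the previous theorem and transport the known geometric structure of the quiver variety to the moduli space.

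First, I would recall that $\mathfrak{M}_{\lambda}^{\text{reg}}(\mathsf{Q},\alpha)$ arises as a holomorphic symplectic (Marsden--Weinstein) reduction. Concretely, the cotangent bundle of the representation space of the doubled quiver $\overline{\mathsf{Q}}$ of dimension vector $\alpha$ carries its canonical holomorphic symplectic form, and the moment map for the natural $GL(\alpha)=\prod_{i} GL_{\alpha_i}(\mathbb{C})$ action admits $\lambda$ as a regular value along the locus of simple (equivalently, $\lambda$-regular) representations. On this regular locus the action of $GL(\alpha)/\mathbb{C}^\times$ is free, so the quotient is smooth and inherits the reduced holomorphic symplectic form. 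Pulling back along the open embedding then immediately equips $\mathfrak{M}(\mathbf{B})$ with a smooth complex symplectic structure.

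The substantive step left is connectedness, since an open subset of a connected variety is not automatically connected. For this I would appeal to Crawley-Boevey's root-theoretic results on quiver varieties: when the dimension vector $\alpha$ is a positive root of the Kac--Moody algebra attached to $\mathsf{Q}$ and $\lambda\cdot\alpha=0$, the variety $\mathfrak{M}_{\lambda}^{\text{reg}}(\mathsf{Q},\alpha)$ is nonempty and irreducible. A nonempty Zariski-open subset of an irreducible variety is itself irreducible and therefore connected, and connectedness of $\mathfrak{M}(\mathbf{B})$ follows.

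The main obstacle is verifying that the triple $(\mathsf{Q},\alpha,\lambda)$ produced from the spectral data $\mathbf{B}$ really does satisfy Crawley-Boevey's positive-root condition, and that the open embedding lands inside the smooth (regular) locus where the Hamiltonian reduction argument applies. In other words, one must check that stability of the meromorphic connection in the sense used to define $\mathfrak{M}(\mathbf{B})$ matches simplicity of the corresponding representation of $\overline{\mathsf{Q}}$. These compatibilities are precisely the content of the constructions of Crawley-Boevey, Boalch, Yamakawa--Hiroe, and Hiroe cited after the first theorem, so the proof amounts to invoking those results rather than performing any fresh calculation.
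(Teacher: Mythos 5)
Your proposal is correct and follows essentially the same route as the paper: the symplectic structure is transported from the Marsden--Weinstein reduction $\mathfrak{M}^{\text{reg}}_{\lambda+\bar{\mathbf{v}}}(\mathsf{Q},\alpha)$ through the open embedding of Theorem \ref{embedding}, and connectedness is deduced from Crawley-Boevey's irreducibility results (Theorems \ref{CB} and \ref{CB2}) together with the fact that a nonempty open subset of an irreducible variety is connected. The only cosmetic difference is that the paper runs the connectedness argument upstairs on the moment-map fibre $\mu^{-1}(\lambda+\bar{\mathbf{v}})$ and then projects (adding a GAGA remark to pass to the analytic topology), whereas you argue directly on the quotient; the content is the same.
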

Compare with the results by Boalch \cite{Boa1} and  
Inaba-Saito \cite{IS}, etc. 
This fact should be essential for the theory of isomonodromic deformations
because it is believed that isomonodromic deformations should have 
descriptions as Hamiltonian systems over the moduli spaces.

Also we can determine the condition under which the moduli spaces are nonempty.
\begin{thm}[Corollary 7.13 in \cite{H2}]
	The explicit necessary and sufficient condition for 
		$\mathfrak{M}(\mathbf{B})\neq \emptyset$
		is determined 
		by means of the root system of the quiver $\mathsf{Q}$.
\end{thm}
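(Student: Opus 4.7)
The plan is to transfer the nonemptiness question through the open embedding of Theorem~1 and to apply Crawley-Boevey's combinatorial criterion for nonemptiness of quiver varieties. First I would make precise the data $(\mathsf{Q},\alpha,\lambda)$ attached to $\mathbf{B}$ by the construction underlying Theorem~1: the shape of $\mathsf{Q}$ is dictated by the refined spectral and irregular types at each singular point, the dimension vector $\alpha$ is built from the ranks in the filtrations arising from eigenvalue multiplicities and level structures, and $\lambda$ assembles the corresponding spectral parameters. The residue theorem then becomes the Fuchs-type identity $\lambda\cdot\alpha=0$.

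Next I would invoke Crawley-Boevey's nonemptiness theorem, which asserts that $\mathfrak{M}_\lambda^{\text{reg}}(\mathsf{Q},\alpha)$ is nonempty precisely when $\alpha$ is a positive root of the Kac-Moody root system attached to $\mathsf{Q}$ with $\lambda\cdot\alpha=0$, satisfying $p(\alpha)>\sum_{i=1}^r p(\beta^{(i)})$ for every nontrivial decomposition $\alpha=\beta^{(1)}+\cdots+\beta^{(r)}$ into positive roots $\beta^{(i)}$ with $\lambda\cdot\beta^{(i)}=0$, where $p$ denotes the usual deficiency function on the root lattice. This last condition singles out the open stratum of simple representations, which is precisely the ``reg'' part.

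The principal technical obstacle is that the embedding of Theorem~1 is only open, so a priori $\mathfrak{M}(\mathbf{B})$ could be empty while $\mathfrak{M}_\lambda^{\text{reg}}(\mathsf{Q},\alpha)$ is not. I would resolve this by identifying the complement of the image as a proper closed subvariety, cut out by the degeneration of the block/filtration structures that encode the prescribed spectral and irregular types; irreducibility of the ambient quiver variety then forces the image to be a nonempty dense open subset whenever the root condition holds. Finally one translates the root-theoretic criterion back into explicit combinatorial inequalities on the numerical invariants of $\mathbf{B}$ (multiplicities in the irregular types, dimensions of residue eigenspaces, pole orders, and so on), yielding the desired explicit necessary and sufficient condition.
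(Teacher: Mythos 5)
Your overall strategy --- realize $\mathfrak{M}(\mathbf{B})$ inside a quiver variety and quote Crawley-Boevey --- is indeed the route the paper takes, but there are two genuine gaps. First, the criterion you propose is the wrong one. The bijection of Theorem \ref{quiverreal} identifies $\mathfrak{M}(\mathbf{B})$ with classes of $\mathcal{L}$-\emph{irreducible} representations (plus determinant conditions), not irreducible ones, so the correct answer is $\alpha\in\Sigma_{\lambda}^{\text{dif}}$, where condition (2) only quantifies over decompositions $\alpha=\beta_{1}+\cdots+\beta_{r}$ with each $\beta_{i}$ a positive root lying in the sublattice $\mathcal{L}^{+}$. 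This is strictly weaker than membership in Crawley-Boevey's $\Sigma_{\lambda}$, which quantifies over all decompositions into positive roots. Your proposal never mentions $\mathcal{L}$ or $\mathcal{L}$-irreducibility, and the bridge between the two criteria --- the addition $\mathrm{Add}_{\mathbf{v}}$ of Theorem \ref{embed}, which shifts $\lambda$ to $\lambda+\bar{\mathbf{v}}$ so that every subrepresentation dimension $\beta\le\alpha$ with $\lambda'\cdot\beta=0$ is forced into $\mathcal{L}$, together with the identification of $\Sigma_{\lambda+\bar{\mathbf{v}}}$ with $\Sigma_{\lambda}^{\text{dif}}$ --- is entirely absent. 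Without it, applying Crawley-Boevey at the original $\lambda$ gives a condition that can fail while $\mathfrak{M}(\mathbf{B})$ is nonempty.

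Second, your resolution of the ``only an open embedding'' obstacle is circular. The image of $\Phi$ is the locus in $\mathfrak{M}^{\text{reg}}_{\lambda+\bar{\mathbf{v}}}(\mathsf{Q},\alpha)$ where $\det\left(x_{\rho^{[0,j]}_{[i,j']}}\right)\neq 0$ for all $i\in I_{\text{irr}}\setminus\{0\}$. Irreducibility of $\mu^{-1}(\lambda+\bar{\mathbf{v}})$ tells you a \emph{nonempty} open subset is dense; it does not tell you this particular open subset is nonempty --- the complement being closed does not make it proper. Establishing that some irreducible solution of the moment map equation actually satisfies the determinant conditions is the substantive content of Theorem 0.9 in \cite{H2} (and of the earlier special cases in \cite{C}, \cite{Boarx}, \cite{HY}), and it requires an argument specific to the structure of the quiver (e.g., showing that the moment map relations at the vertices $[i,j]$ force the relevant block maps of an irreducible representation to be invertible, or an explicit construction). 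Asserting that the complement is ``a proper closed subvariety cut out by degeneration of the block structures'' names the set but does not prove it is proper, so this step of your argument does not go through as written.
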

The above fact is also called a {\em additive Deligne-Simpson problem},
see Kostov \cite{Kos}, Crawley-Boevey \cite{C}, 
Boalch \cite{Boarx}, Yamakawa with the author \cite{HY} and the author 
\cite{H2}, etc.

Further, there is another advantage 
of this realization of moduli spaces as quiver varieties, which shall 
be explained in the second section.
Namely, quiver varieties naturally have Weyl group actions generated by 
the reflection functors. 
Thus we can translate these Weyl group
symmetries of quiver varieties to those of 
the moduli spaces.
On the other hand, the moduli spaces themselves have symmetries generated 
by middle convolutions. Thus  
we shall compare these symmetries of middle convolutions 
and of quivers. 
Afterward, the orbits under these 
Weyl groups are in our interest.
It will be shown that a kind of a finiteness of the fundamental 
domains under these actions:
\begin{thm}
	If we fix the dimension of moduli spaces, then 
		there exist only finite numbers of fundamental 
		spectral types  (see Definition \ref{spectype}) of moduli spaces.
\end{thm}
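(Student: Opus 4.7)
The plan is to translate the finiteness statement into one about dimension vectors of quivers, and then to invoke the combinatorics of the fundamental region of the Kac--Moody root system attached to the quiver.

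First, by the open embedding of the first theorem in the introduction, every moduli space $\mathfrak{M}(\mathbf{B})$ is associated with a triple $(\mathsf{Q},\alpha,\lambda)$, and the spectral type (Definition \ref{spectype}) is recovered from the combinatorial data $(\mathsf{Q},\alpha)$. The dimension of the quiver variety satisfies $\dim \mathfrak{M}_{\lambda}^{\mathrm{reg}}(\mathsf{Q},\alpha) = 2 - (\alpha,\alpha)_{\mathsf{Q}}$, where $(\cdot,\cdot)_{\mathsf{Q}}$ is the Cartan form of $\mathsf{Q}$. On the moduli side the Weyl group acts via middle convolutions together with the permutation symmetries of the spectral data; under the embedding this matches the reflection action of $W_{\mathsf{Q}}$ on dimension vectors. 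A \emph{fundamental} spectral type therefore corresponds to a dimension vector $\alpha$ lying in the fundamental region $F_{\mathsf{Q}}$: an $\alpha$ of connected support with $(\alpha,\alpha_i)_{\mathsf{Q}} \leq 0$ for every simple root $\alpha_i$.

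The theorem now reduces to the following: for a fixed integer $c$, only finitely many pairs $(\mathsf{Q},\alpha)$ arising from spectral types satisfy $\alpha \in F_{\mathsf{Q}}$ and $(\alpha,\alpha)_{\mathsf{Q}} = c$. The starting point is the identity $(\alpha,\alpha)_{\mathsf{Q}} = \sum_{i}\alpha_i (\alpha,\alpha_i)_{\mathsf{Q}}$. Each summand is non-positive on the fundamental region, so fixing $c$ bounds $\alpha_i$ at every vertex where $(\alpha,\alpha_i)_{\mathsf{Q}}<0$. Combined with the explicit star- or comet-shaped structure of the quivers produced by the Crawley-Boevey/Boalch/Hiroe construction---legs encoding the partition data of the spectral type attached at central vertices recording the Laurent tails---the inequality $(\alpha,\alpha_i)_{\mathsf{Q}} \leq 0$ forces the coordinates along each leg to form a concave positive integer sequence from the central vertex outward. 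This bounds both the length of each leg and the size of its entries in terms of $|c|$; the number of legs is likewise controlled because each makes a definite positive contribution to $-(\alpha,\alpha)_{\mathsf{Q}}$.

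The main obstacle I foresee is the treatment of vertices where $(\alpha,\alpha_i)_{\mathsf{Q}} = 0$, since the identity above gives no direct bound on $\alpha_i$ there. One must argue that, in the specific class of quivers at hand, such vertices appear only in tightly constrained configurations: for instance, as the interior of an affine ADE sub-diagram along a leg. A case analysis along each leg, exploiting the fact that one endpoint has valence one (so the fundamental-region inequality becomes strict unless the adjacent coordinate is twice the endpoint's), should rule out unbounded chains of zero-defect vertices. Once this is settled, $(\mathsf{Q},\alpha)$ is determined by a finite amount of data, so only finitely many fundamental spectral types of the given dimension can occur.
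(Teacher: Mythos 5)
Your overall plan---convert the dimension into the norm via $\dim\mathfrak{M}(\mathbf{B})=2p(\alpha)=2-(\alpha,\alpha)$, so that fixing the dimension fixes $\mathrm{idx}\,\mathbf{m}=(\alpha,\alpha)$, and then run a finiteness argument in the fundamental region of a root lattice using $(\alpha,\alpha)=\sum_a\alpha_a(\alpha,\epsilon_a)$---is exactly the route the paper takes: it reduces the statement to Theorem \ref{finitespec}, which is Theorem 8 of \cite{HO}, and that reference proves finiteness by precisely this kind of fundamental-set combinatorics. However, as written your reduction has two concrete gaps. First, ``fundamental'' in Definition of fundamental spectral types means $\alpha\in\tilde F$, the $\mathcal{L}$-fundamental set, which requires $(\alpha,\epsilon_a)\le 0$ only for $a\in\mathcal{J}\cup\mathsf{Q}_0^{\text{leg}}$; here the $\epsilon_{\mathbf{i}}$ ($\mathbf{i}\in\mathcal{J}$) are the \emph{composite} roots $\sum_{i\in I_{\text{irr}}}\epsilon_{[i,j_i]}$ attached to middle convolutions, and the relevant Weyl group is $W^{\mathrm{mc}}$, not the full Weyl group of $\mathsf{Q}$. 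When there are two or more irregular singular points, $\tilde F$ is not the fundamental region $F_{\mathsf{Q}}$ you work with, and your key identity no longer decomposes into non-positive terms, since $(\alpha,\epsilon_{[i,j]})$ for an individual irregular vertex need not be $\le 0$. The paper's remedy is the lift $\Xi\colon\widehat{\mathcal{L}}\to\mathcal{L}$ of Proposition \ref{quotientmap}: an isometry from a genuine symmetric Kac--Moody root lattice whose simple roots are indexed by $\mathcal{J}\cup\mathsf{Q}_0^{\text{leg}}$, where the standard argument can be run. Your sketch needs this (or an equivalent device) to be correct beyond the one-irregular-point case.

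Second, the set you reduce to is actually infinite without the \emph{effectiveness} hypothesis, which your proposal never uses. For dimension $2$ (index $0$) every multiple $m\delta$ of an affine null root has connected support, satisfies $(m\delta,\epsilon_a)\le 0$ for all $a$, and has $(m\delta,m\delta)=0$; so ``$\alpha$ in the fundamental region with fixed norm'' admits infinitely many solutions on a single quiver. These are excluded only because $m\delta\notin\Sigma_\lambda^{\text{dif}}$ for $m\ge 2$: the decomposition $m\delta=\delta+\cdots+\delta$ gives $p(m\delta)=1<m=\sum p(\delta)$, violating condition (2) in the definition of $\Sigma_\lambda^{\text{dif}}$. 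This is also exactly where your acknowledged difficulty with zero-defect vertices lives: the unbounded chains and radical directions you worry about are eliminated by effectiveness (and reducedness, which bounds the number of singular points), not by the fundamental-region inequalities alone. So the missing ingredients are not a routine case analysis but the two structural inputs the paper supplies: the Kac--Moody lift $\widehat{\mathcal{L}}$ and the non-emptiness criterion $\alpha\in\Sigma_\lambda^{\text{dif}}$ of Theorem \ref{DSproblem}.
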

This was proved by Oshima \cite{O} when all singularities are regular singular 
and by Oshima and the author \cite{HO} for general cases.
We also give the classifications of fundamental 
spectral types of dimension 2 and 4 
for example.

Finally, we shall translate the symmetries of moduli spaces into those of 
isomonodromic deformations. Namely,
Haraoka and Filipuk \cite{HF} showed that 
deformation equations of isomonodromic deformations of 
linear Fuchsian differential equations are invariant under 
middle convolutions. This theory is generalized for 
isomonodormic deformations of irregular singular differential equations by 
Boalch \cite{Bo} and Yamakawa \cite{Y2}.
As a corollary of these facts, 
it will be shown that there exist fundamental domains of isomonodromic 
deformations under the action of middle convolutions. Moreover 
we shall see the finiteness of  spectral types of 
fundamental isomonodromic deformations of a fixed dimension
and the classifications of them for lower dimensional cases.
As an application of these, it will be shown that the spectral types 
appeared in the paper of Kawakami-Nakamura-Sakai \cite{KNS} in which 
they considered confluences of 4-dimensional isomonodromic deformations 
is the complete 
list of fundamental spectral types of dimension 4, 
and moreover we classify their Weyl group
symmetries explicitly.

We should mention related previous works.
In \cite{Yam}, Yamakawa gave another realization of 
moduli spaces as generalized quiver varieties associated 
with non-symmetric root systems and clarified the symmetries 
of moduli spaces from his quivers.
In \cite{Bo},
Boalch gave a formulation of isomonodromic deformations 
as Hamiltonian systems on quiver varieties and studied the symmetries
of them when the meromorphic connections have
only one irregular singular point and an arbitrary number of regular 
singular points.
In this note we do not deal with Hamiltonian equations of isomonodromic 
deformations directly. 
However our study of the symmetries of isomonodromic 
deformations of meromorphic connections with general unramified 
irregular singularities should induce the symmetries of their Hamiltonian 
equations. Also, our study after these works will shed a light over the 
importance of quiver varieties for the study of isomonodromic deformations.
\section{Moduli spaces of meromorphic connections, 
quiver varieties and integrable deformations}
For a commutative ring $R$, $M(n,R)$ denotes the set of $n\times n$
matrices with coefficients in $R$ and $\mathrm{GL}(n,R)\subset M(n,R)$ consists
of invertible elements.
The sheaves of holomorphic functions and meromorphic functions 
on a complex manifold $X$ are written by  $\mathcal{O}_{X}$ and 
$\mathcal{M}_{X}$ 
respectively. In particular when $X=\mathbb{P}^{1}$, we write $\mathcal{O}=
\mathcal{O}_{\mathbb{P}^{1}}$ and $\mathcal{M}=\mathcal{M}_{\mathbb{P}^{1}}$
for short.
Let us denote the ring of convergent (resp. formal) power series of $z$
by $\mathbb{C}\{z\}$ (resp. $\mathbb{C}[\![z]\!]$). 
Their total quotient fields are written by $\mathbb{C}\{\!\{z\}\!\}$ and 
$\mathbb{C}(\!(z)\!)$ respectively.

In this section, we first give a quick review of meromorphic connections 
on vector bundles over $\mathbb{P}^{1}$ and their relation with systems 
of first order linear ordinary differential equations defined on 
$\mathbb{P}^{1}$. Next we introduce moduli spaces of stable meromorphic 
connections on trivial vector bundles over $\mathbb{P}^{1}$ and 
give their realizations as quiver varieties. Finally, we consider 
integrable deformations of connections in these moduli spaces.
\subsection{Gauge equivalences of differential equations}
We recall gauge transformations of systems of first order linear 
ordinary differential equations defined locally on $\mathbb{P}^{1}$ and 
moreover recall Hukuhara-Turrittin-Levelt normal forms of
local differential equations under formal gauge transformations.

Let $U$ be an open subset of $\mathbb{P}^{1}$ and $z$ a local coordinate on $U$.
\begin{df}[gauge transformation]\normalfont
For a linear differential equation 
\[
	\frac{d}{dz}Y=AY
\]
with $A\in M(n,\mathcal{M}(U))$ and $X\in \mathrm{GL}(n,\mathcal{M}(U))$, 
we define a new differential equation
$\frac{d}{dz}\tilde{Y}=B\tilde{Y}$ by 
\[
	B:=XAX^{-1}+\left(\frac{d}{dz}X\right)X^{-1}.
\]
We call $B$ the {\em meromorphic gauge transformation} of $A$ by $X$
and write $B=:X[A]$.
In particular if $X\in \mathrm{GL}(n,\mathcal{O}(U))$, we say 
the {\em holomorhic gauge transformation}.
\end{df}
Here we note that if a vector $Y$ is a solution of  
$\frac{d}{dz}Y=AY$ then $\tilde{Y}=XY$ is a solution of $\frac{d}{dz}\tilde{Y}
=B\tilde{Y}$ for $B=X[A]$.

Let us take $a\in U$ and choose a local coordinate $z$ which is zero at $a$. 
Then the stalks $\mathcal{O}_{a}$ and $\mathcal{M}_{a}$ at $a$ 
can be identified with 
$\mathbb{C}\{z\}$  and $\mathbb{C}\{\!\{z\}\!\}$. 
We can similarly define holomorphic and meromorphic gauge 
transformations of a local differential equation 
$\frac{d}{dz}Y=AY$ with $A\in M(n,\mathcal{M}_{a})$.
In this case, we can moreover define formal gauge transformations,
namely we say $X[A]$ is the {\em formal holomorphic gauge transformation} 
of $A$ by $X$ 
if $X\in \mathrm{GL}(n,\mathbb{C}[\![z]\!])$ and 
{\em formal meromorphic gauge transformation}  if 
$X\in \mathrm{GL}(n,\mathbb{C}(\!(z)\!))$.

For a local differential equation $\frac{d}{dz}Y=AY$ with 
$A\in M(n,\mathbb{C}(\!(z)\!))$, it is known that there exists 
a normal form under the formal meromorphic gauge transformations as 
follows.

Let $\mathcal{P}:=\bigcup_{s\in \mathbb{Z}_{>0}}\mathbb{C}(\!(z^{\frac{1}{s}})\!)$, the 
field of Puiseux series. 

\begin{df}[Hukuhara-Turrittin-Levelt normal form]\normalfont
By {\em Hukuhara-Turrittin-Levelt normal form} or {\em HTL normal form}
for short, we mean an element of the form
\[
	\mathrm{diag}\left(q_{1}(z^{-\frac{1}{s}})I_{n_{1}}+R_{1},\ldots,
	q_{m}(z^{-\frac{1}{s}})I_{n_{m}}+R_{m}
	\right)z^{-1}\in M(n,\mathbb{C}(\!(z^{\frac{1}{s}})\!))\subset 
M(n, \mathcal{P})
\]
where $q_{i}(t)\in t\,\mathbb{C}[t]$ satisfying $q_{i}\neq q_{j}$
if $i\neq j$, and $R_{i}\in M(n_{i},\mathbb{C})$ with $n_1+\cdots n_{m}=n$.
\end{df}
For an HTL normal form $H\in M(n,\mathcal{P})$, we call $H_{\text{irr}}:=
H-\mathrm{pr}_{\text{res}}(H)z^{-1}$ the {\em irregular part} of $H$.
Here we denote the coefficient matrix of $z^{-1}$ in $A\in M(n,\mathcal{P})$
by $\mathrm{pr}_{\text{res}}(A)$.

The following is a fundamental fact of the local formal theory of 
differential equations with irregular singularity.

\begin{thm}[Hukuhara-Turrittin-Levelt, see \cite{W} for instance]
	For any $A\in M(n,\mathbb{C}(\!(z)\!))$, there exist an integer 
	$r\in \mathbb{Z}_{>0}$ and 
	$X\in 
	\mathrm{GL}(n,\mathbb{C}(\!(z^{\frac{1}{r}})\!))$ such that 
	$X[A]$ is an HTL normal form in $M(n,\mathbb{C}(\!(z^{\frac{1}{r}})\!))$.
	We call this $X[A]$ the {\em normal form} of $A$.
\end{thm}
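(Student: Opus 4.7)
The plan is to proceed by induction on the matrix size $n$, reducing to smaller blocks via a splitting lemma whenever possible and introducing ramification to handle cases where naïve splitting fails. For the base case $n=1$, any $a(z) = \sum_{k \ge -N} a_k z^k$ decomposes as $a(z) = p(z^{-1}) + \lambda z^{-1} + h(z)$ with $p(s) \in s^2 \mathbb{C}[s]$, $\lambda \in \mathbb{C}$, and $h(z) \in \mathbb{C}[\![z]\!]$; the scalar gauge transformation $X(z) = \exp\!\bigl(-\int h(z)\,dz\bigr) \in 1+z\mathbb{C}[\![z]\!]$ eliminates $h$, yielding the HTL form already over $\mathbb{C}(\!(z)\!)$ itself (so $r=1$).

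For the inductive step the key tool I would develop is a splitting lemma: writing $A = z^{-N}A_{-N} + z^{-N+1}A_{-N+1}+\cdots$, if $A_{-N}$ admits a block decomposition $A_{-N} = \mathrm{diag}(B_1, B_2)$ with disjoint spectra, then there exists $X \in \mathrm{GL}(n, \mathbb{C}[\![z]\!])$ making $X[A]$ block-diagonal with the same partition. I would construct $X = I + z X_1 + z^2 X_2 + \cdots$ order by order; the coefficient equations reduce to Sylvester equations $B_1 Y - Y B_2 = C$, whose solutions exist and are unique thanks to the disjointness of spectra. After splitting, each block has size $<n$, so the induction hypothesis applies to each block independently (over possibly distinct ramified covers).

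When $A_{-N}$ has only one eigenvalue $\alpha$, I would first replace $A$ by $A - \alpha z^{-N} I$, a legitimate step since this scalar shift will be absorbed into the $q_i$ parts of the final HTL form. If the new coefficient of $z^{-N}$ is non-scalar, the splitting lemma applies; otherwise I continue to lower-order coefficients and repeat. Either the iteration eventually exposes a non-scalar coefficient (triggering splitting), or the slopes of the Newton polygon of $A$ are non-integral, in which case I would perform the ramified substitution $z = t^r$ with $r$ the least common denominator of the slopes. After this ramification all slopes become integers, and the splitting/shift procedure resumes in the variable $t$.

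The hard part will be proving termination: I must exhibit a complexity invariant---for instance the lexicographic pair (matrix size, Katz invariant)---that strictly decreases under splitting, shifting and ramification, so that the process halts in finitely many steps. A secondary bookkeeping issue is that the ramification indices arising in different blocks after splitting must be combined, via least common multiples, into a single $r$, so that the final $X$ lies in $\mathrm{GL}(n, \mathbb{C}(\!(t)\!))$ for one fixed $t$ with $t^r = z$.
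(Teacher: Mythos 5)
The paper does not prove this theorem; it cites Wasow \cite{W}, and your outline follows the standard proof found there (splitting via the leading coefficient, Sylvester equations, ramification, descending induction on a complexity invariant). So the comparison is really between your sketch and the classical argument, and there is one genuine gap in your sketch.

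The problem is the step ``When $A_{-N}$ has only one eigenvalue $\alpha$, I would first replace $A$ by $A-\alpha z^{-N}I$\dots\ If the new coefficient of $z^{-N}$ is non-scalar, the splitting lemma applies.'' After the scalar shift the new coefficient of $z^{-N}$ is $A_{-N}-\alpha I$, which is \emph{nilpotent}; if it is non-scalar it is a non-zero nilpotent matrix, which still has only the single eigenvalue $0$. Your splitting lemma is predicated on a block decomposition $\mathrm{diag}(B_1,B_2)$ with \emph{disjoint spectra} (that disjointness is exactly what makes the Sylvester equations $B_1Y-YB_2=C$ uniquely solvable), so it cannot be applied to a nilpotent leading coefficient: there is no spectral decomposition to split along. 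This nilpotent case is precisely the crux of the Hukuhara--Turrittin theorem, and it is not resolved by passing to lower-order coefficients either, since the leading term is non-zero. What is missing is the shearing transformation: a gauge by a diagonal matrix $\mathrm{diag}(z^{k_1/r},\dots,z^{k_n/r})$ with rational exponents read off from the Newton polygon, which (after the ramification $z=t^r$ you already allow) either lowers the effective polar order or converts the leading coefficient into one with at least two distinct eigenvalues, at which point your splitting lemma genuinely applies. Your invocation of the Newton polygon and the Katz invariant shows you are aware of the right bookkeeping, but as written the procedure stalls (or asserts a false reduction) on any $A$ whose leading coefficient is a single non-trivial Jordan block, and the termination argument cannot be completed until the shearing step is inserted and shown to strictly decrease your lexicographic invariant.
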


In the above theorem, we may assume the field extension is minimal, i.e.,
\[
	r=\mathrm{min}\left\{
		s\,\middle|\, \text{normal form }X[A]\in
		M(n,\mathbb{C}(\!(z^{\frac{1}{s}})\!))
	\right\}.
\]
If two HTL normal forms $H,H'\in M(n,\mathbb{C}(\!(z^{\frac{1}{r}})\!))$ are 
	normal forms of an $A\in M(n,\mathbb{C}(\!(z)\!))$, then  
	there exists $g\in \mathrm{GL}(n,\mathbb{C})$ such that 
	\begin{align*}
		g^{-1}H_{\text{irr}}g&=H'_{\text{irr}},&
	g^{-1}\mathrm{exp}(2\pi\sqrt{-1}k\,\mathrm{pr}_{\text{res}}(H))g&=
	\mathrm{exp}(2\pi\sqrt{-1}k\,\mathrm{pr}_{\text{res}}(H'))
\end{align*}for some 
	integer $k\ge 1$, see Theorem 6.3 in \cite{BV} for example.

\subsection{Meromorphic connections}
Let us recall the notion of meromorphic connections and 
see their relationship with differential equations.
For $f=\sum_{i>-\infty}^{\infty}a_{i}z^{i}
\in \mathbb{C}(\!(z)\!)$, the {\em order} is  
\[
	\mathrm{ord}(f):=\mathrm{min}\{i\mid a_{i}\neq 0\}.
\]
If $f=0$, we formally put $\mathrm{ord}(f)=\infty$.
For a meromorphic function $f$ locally defined near $a\in \mathbb{P}^{1}$, 
we denote the germ of $f$ at 
$a$ by $f_{a}$. 
We may see $f_{a}\in \mathbb{C}\{\!\{z_{a}\}\!\}\subset 
\mathbb{C}(\!(z_{a})\!)$ by setting $z_{a}=z-a$
if $a\in \mathbb{C}$ and $z_{a}=1/z$ if $a=\infty$ where we take 
$z$ as the standard coordinate of $\mathbb{C}$.
Then define 
\[
	\mathrm{ord}_{a}(f):=\mathrm{ord}(f_{a}).
\]
For a meromorphic 1-form $\omega$ defined on $\mathbb{P}^{1}$, the order 
$\mathrm{ord}_{a}(\omega)$ can be defined as follows. 
Set $U_{1}=\mathbb{P}^{1}\backslash 
\{\infty\}$ and $U_{2}=\mathbb{P}^{1}\backslash\{0\}$. 
Let $z_{i}$ be coordinates of $U_{i}$, $i=1,2$, such that 
$z_{1}(0)=z_{2}(\infty)=0$ and $z_{2}=1/z_{1}$ in $U_{1}\cap U_{2}$.
Then there exist meromorphic functions $f_{i}$ on $U_{i}$ such that 
\[
	\omega =f_{i}\,dz_{i}
\]
on $U_{i}$ for $i=1,2$. Here we note that 
\[
	z_{1}^{2}f_{1}(z_{1})=-f_{2}(1/z_{1})
\]
for  $z_{1}\in U_{1}\cap U_{2}=\mathbb{C}\backslash\{0\}$.
Then define
\[
	\mathrm{ord}_{a}(\omega):=\ord_{a}(f_{i})
\]
for $a\in U_{i}$, $i=1,2$.
 
Let us fix a collection of  points ${a_{0},\ldots,a_{p}}\in \mathbb{P}^1$
 and set
$S:=k_{0}a_{0}+\cdots +k_{p} a_{p}$ as an effective
divisor with $k_0,\ldots,k_{p}>0$.
For $a\in \mathbb{P}^{1}$ 
let $S(a)$ be the coefficient of $a$ in $S$, i.e.,
\[
	S(a):=
	\begin{cases}
	k_{i}&\text{if }a=a_{i}\text{ for }i=0,\ldots,p,\\
	0&\text{otherwise.}
	\end{cases}
\]
For an open set $U\subset \mathbb{P}^{1}$ we define 
$\Omega_{S}(U)$ to be the set of all meromorphic 1-forms $\omega$
on $U$ satisfying $\mathrm{ord}_a(\omega)\ge -S(a)$ for any $a\in U$.
This correspondence defines the sheaf $\Omega_{S}$ by the natural
restriction mappings.

Let $\mathcal{E}$ be a locally free sheaf of rank $n$ on $\mathbb{P}^{1}$,
namely a sheaf of $\mathcal{O}$-modules satisfying 
that for any $a\in \mathbb{P}^{1}$
there exists an open neighbourhood $V\subset \mathbb{P}^{1}$ such that 
$\mathcal{E}|_{V}\cong \mathcal{O}^{n}|_V$.
We may sometimes regard $\mathcal{E}$ as a holomorphic vector bundle over 
$\mathbb{P}^{1}$.
\begin{df}[meromorphic connection]\normalfont
A {\em meromorphic connection} is a pair $(\mathcal{E},\nabla)$ 
of a locally free sheaf $\mathcal{E}$ and a morphism 
$\nabla\colon \mathcal{E}\rightarrow \mathcal{E}\otimes \Omega_{S}$ 
of sheaves of $\mathbb{C}$-vector spaces satisfying
\[
	\nabla(fs)=df\otimes s+f\otimes \nabla(s)
\]
for all $f\in \mathcal{O}(U)$, $s\in \mathcal{E}(U)$
and open subsets $U\subset \mathbb{P}^{1}$.
\end{df}
Let $U\subset \mathbb{P}^{1}$ be an open subset which gives a local 
trivialization of $\mathcal{E}$ and $z$ a local coordinate of $U$.
Then if we fix an identification
$\mathcal{E}|_{U}\cong \mathcal{O}^{n}|_{U}$, 
we can write $\nabla=d-A\,dz$ by $A\in M(n,\mathcal{M}(U))$ on $U$. 
Note that if we write $\nabla=d-A'\,dz$ by another identification
$\mathcal{E}|_{U}\cong \mathcal{O}^{n}|_{U}$, then $A'$ can be obtained by 
a holomorphic gauge transformation of $A$, namely there exists 
$X\in \mathrm{GL}(n,\mathcal{O}(U))$ such that 
\[
	A'=X[A].
\]
Thus we may say that $(\mathcal{E},\nabla)$ defines a holomorphic gauge 
equivalent class of a local differential equation 
\[
	\frac{d}{dz}Y=AY
\]
on $U\subset \mathbb{P}^{1}$.

In particular, suppose that  $\mathcal{E}$ is {\em trivial}, i.e., 
$\mathcal{E}\cong \mathcal{O}^{n}$ and set $U_{1}=\mathbb{P}^{1}
\backslash\{\infty\}$ and $U_{2}=\mathbb{P}^{1}\backslash\{0\}$
as before.
Then if we fix a trivialization $\mathcal{E}\cong\mathcal{O}^{n}$, 
we have
$\nabla=d-A(z_{1})dz_{1}$ on $U_{1}$ with 
$A(z_{1})=(\alpha_{i,j}(z_{1}))_{i,j=1,\ldots,n}\in M(n,\mathbb{C}(z))$ 
satisfying
$\ord_{a}(\alpha_{i,j})\ge -S(a)$ for all $a\in U_{1}$.
Similarly on $U_{2}$ we have $\nabla=d-B(z_{2})dz_{2}$. 
Since $\mathcal{O}^{n}$ is trivial bundle, 
\[
	A(z_{1})dz_{1}=B(z_{2})dz_{2}\text{ on }U_{1}\cap U_{2}.
\]
Namely, 
\[
	B(z_{2})=-\frac{A(1/z_{2})}{z_{2}^{2}}.
\]
This is nothing but the coordinate exchange $t=\frac{1}{z}$ for a differential
equation
\[
	\frac{d}{dz}Y=A(z)Y\longmapsto
	-t^{2}\frac{d}{dt}Y=A(1/t)Y.
\]

Thus a meromorphic connection 
$(\mathcal{O}^n,\nabla)$, $\nabla=d-A\,dz$ on the trivial bundle $\mathcal{O}^{n}$
over $\mathbb{P}^{1}$ corresponds to 
a meromorphic differential equation 
$\frac{d}{dz}Y=AY$ with 
$A=(\alpha_{i,j})_{i,j=1,\ldots,n}\in M(n,\mathbb{C}(z))$ satisfying
$\ord_{a}(\alpha_{i,j})\ge -S(a)$ for all $a\in \mathbb{P}^{1}$,
and vice versa. 
\subsection{Quiver varieties}\label{Quiver}
In this subsection we shall introduce quiver varieties. 
\subsubsection{Complex symplectic manifold and symplectic reduction}
Before seeing the definition of quiver varieties, 
let us recall complex symplectic manifolds.
\begin{df}[complex symplectic manifold]\normalfont
	Let $M$ be an even dimensional complex manifold 
	and $\omega$ a closed nondegenerate holomorphic 2-form on  $M$.
	Then $\omega$ is called a {\em symplectic form} on $M$
	and the pair $(M,\omega)$ is called a {\em complex symplectic 
	manifold}.
\end{df}

\begin{exa}[cotangent bundle of $\mathbb{C}^{n}$]\normalfont
Let us see that the holomorphic cotangent bundle 
$T^{*}\mathbb{C}^{n}$ of
$\mathbb{C}^{n}$ is a complex symplectic manifold.
Since $T^{*}\mathbb{C}^{n}\cong \mathbb{C}^{n}\times (\mathbb{C}^{n})^{*}$,
the coordinate system $(z_{1},\ldots,z_{n})$ of $\mathbb{C}^{n}$ and 
the dual coordinate $(\xi_{1},\ldots,\xi_{n})$ of $(\mathbb{C}^{n})^{*}$
defines a coordinate system of $T^{*}\mathbb{C}^{n}$.
Then the 2-form of $T^{*}\mathbb{C}^{n}$  defined by 
\[
	\omega:=\sum_{i=1}^{n}dz_{i}\wedge d\xi_{i}
\]
is closed since $\omega=-d\alpha$,
\[
	\alpha:=\sum_{i=1}^{n}\xi_{i}\,dz_{i}.
\]
This $\omega$ is called the {\em canonical symplectic form} of 
$T^{*}\mathbb{C}^{n}$.
\end{exa}

Let us define a moment map on $(M,\omega)$. 
Suppose that $M$ has an action
of a complex Lie group $G$. Let $\mathfrak{g}$  be the Lie algebra of $G$ and 
$\mathfrak{g}^*$ the dual vector space of $\mathfrak{g}$.
Let us denote the exponential map by
\[
	\mathrm{exp}\colon \mathfrak{g}\longrightarrow G.
\]
Then any $\xi\in \mathfrak{g}$ defines the holomorphic 
vector field $\xi_{M}\in TM$
by the action of the one parameter subgroup
\[
	m\longmapsto \mathrm{exp}(-t\xi)\cdot m\,\quad m\in M,\,t\in\mathbb{R}.
\]
Then a moment map is defined as follows.
\begin{df}[moment map]\normalfont
	Let $(M,\omega)$ be a complex symplectic manifold with 
	a $G$-action as above. Then a $G$-equivariant map 
	\[
		\mu\colon M\longrightarrow \mathfrak{g}^{*}
	\]
	is called {\em moment map} if it satisfies
	\[
		d\langle \mu(\cdot),\xi\rangle(v)=
		\omega(v,\xi_M)
	\]
	for all $v\in \mathfrak{X}(M)$, vector fields 
	on $M$. Here $\langle\ ,\ \rangle\colon \mathfrak{g}^{*}\times
	\mathfrak{g}\rightarrow \mathbb{C}$ is the canonical 
	pairing and we regard $\langle \mu(\cdot),\xi\rangle\in C^{\infty}(
	M)$.
\end{df}

Let us assume that the action of $G$ is free and proper, namely stabilizers 
in $G$ of $m$ are trivial for all $m\in M$ and the action map
\[
	\begin{array}{ccc}
		G\times M&\longrightarrow &M\times M\\
		(g,m)&\longmapsto&(g\cdot m,m)
	\end{array}
\]
is a proper map.
In this case it is known that the orbit space $M/G$ becomes a manifold.
Let us take $\xi\in (\mathfrak{g}^{*})^{G}$, 
$G$-invariant under the coadjoint action, and consider $\mu^{-1}(\xi)$.
Then $\mu^{-1}(\xi)/G$ becomes a complex manifold and moreover has a 
symplectic form $\underline{\omega}$ defined as follows.
For $\underline{p}\in \mu^{-1}(\xi)/G$ and $\underline{u},\underline{v}
\in T_{\underline{p}}(\mu^{-1}(\xi)/G)$, take $p\in \pi^{-1}(\underline{p})$
, the inverse image of the projection $\pi\colon \mu^{-1}(\xi)\rightarrow 
\mu^{-1}(\xi)/G$, and $u,v\in T_{p}\mu^{-1}(\xi)$ so that 
$(\pi_{*})_{p}(u)=\underline{u}$ and $(\pi_{*})_{p}(v)=\underline{v}$.
Here $\pi_{*}\colon T\mu^{-1}(\xi)\rightarrow T(\mu^{-1}(\xi)/G)$ 
is the differentiation of $\pi$.
Then we define $\underline{\omega}_{\underline{p}}(\underline{u},
\underline{v}):=\omega_{p}(u,v)$. It can be shown that $\underline{\omega}$
is well-defined because $\mu$ is a moment map.

\begin{df}[symplectic reduction, Marsden-Weinstein reduction]\normalfont
	We call the 
	symplectic manifold $(\mu^{-1}(\xi)/G,\underline{\omega})$ 
	a {\em symplectic reduction} or {\em Marsden-Weinstein 
	reduction} of $(M,\omega)$ under the action of $G$.
\end{df}
Sometimes we drop the assumption that the action of $G$ is free and proper,
and call $\mu^{-1}(\xi)/G$ a symplectic reduction too though it may have 
singularities.
\subsubsection{Representations of quiver and quiver variety}
Now let us recall representations of quivers
 
\begin{df}[quiver]
\normalfont
	A \textit{quiver} $\mathsf{Q}
	=(\mathsf{Q}_{0},\mathsf{Q}_{1},s,t)$ is the
	quadruple consisting 
	of $\mathsf{Q}_{0}$, the set of \textit{vertices}, 
	and $\mathsf{Q}_{1}$,
        the set of \textit{arrows} connecting 
	vertices in $\mathsf{Q}_{0}$,
	and two maps $s,t\,\colon \mathsf{Q}_{1}\rightarrow \mathsf{Q}_{0}$,
	which associate to each arrow $\rho\in \mathsf{Q}_{1}$ its 
	{\em source} $s(\rho)\in \mathsf{Q}_{0}$ and 
	its {\em target} $t(\rho)\in \mathsf{Q}_{0}$ respectively.
\end{df}
\begin{df}[representation of quiver]
      \normalfont
      Let $\mathsf{Q}$ be a finite quiver, i.e., 
      $\mathsf{Q}_{0}$ and $\mathsf{Q}_{1}$ are finite sets. 
      A {\em representation} $M$ of $\mathsf{Q}$ is defined by the 
      following data:
      \begin{enumerate}
	      \item To each vertex $a$ in $\mathsf{Q}_{0}$,  a finite 
              dimensional $\mathbb{C}$-vector space $M_{a}$ is attached.
      \item To each arrow $\rho\colon a\rightarrow b$ in $\mathsf{Q}_{1}$,
              a $\mathbb{C}$-linear map 
              $x_{\rho}\colon M_{a}\rightarrow M_{b}$
              is attached.
      \end{enumerate}   
\end{df}      
     We denote the representation by 
      $M=(M_{a},x_{\alpha})_{a\in \mathsf{Q}_{0},\alpha\in \mathsf{Q}_{1}}$.
      The collection of integers defined by $\mathbf{dim\,}M
      =(\mathrm{dim}_{\mathbb{C}}M_{a})_{a\in \mathsf{Q}_{0}}$ 
      is called the {\em dimension vector} of 
      $M$.

For a fixed vector $\alpha\in (\mathbb{Z}_{\ge 0})^{\mathsf{Q}_{0}}$,
the representation space is 
\[
	\mathrm{Rep}(\mathsf{Q},V,\alpha)=\bigoplus_{\rho\in \mathsf{Q}_{1}}
	\mathrm{Hom}_{\mathbb{C}}(V_{s(\rho)},V_{t(\rho)}),
\]
where $V=(V_{a})_{a\in \mathsf{Q}_{0}}$ is a collection of finite dimensional 
$\mathbb{C}$-vector spaces with 
$\mathrm{dim}_{\mathbb{C}}V_{a}=\alpha_{a}$.
If $V_{a}=\mathbb{C}^{\alpha_{a}}$ for all $a\in \mathsf{Q}_{0}$, we simply
write 
$$\mathrm{Rep\,}(\mathsf{Q},\alpha)=
\bigoplus_{\rho\in \mathsf{Q}_{1}}
\mathrm{Hom}_{\mathbb{C}}(\mathbb{C}^{\alpha_{s(\rho)}},
\mathbb{C}^{\alpha_{t(\rho)}}).$$

To each $(x_{\rho})_{\rho\in \mathsf{Q}_{1}}
\in \mathrm{Rep}(\mathsf{Q},V,\alpha)$, the representation 
$(V_{a},x_{\rho})_{a\in \mathsf{Q}_{0},\rho\in \mathsf{Q}_{1}}$ associates.
Thus we may identify $(x_{\rho})_{\rho\in \mathsf{Q}_{1}}$ with
$(V_{a},x_{\rho})_{a\in \mathsf{Q}_{0},\rho\in \mathsf{Q}_{1}}$.
  
The representation space $\mathrm{Rep}(\mathsf{Q},V,\alpha)$ has 
an action of $\prod_{a\in \mathsf{Q}_{0}}\mathrm{GL}(V_{a})$.
For $(x_{\rho})_{\rho\in \mathsf{Q}_{1}}\in \mathrm{Rep\,}(\mathsf{Q},V,\alpha)$ and 
$g=(g_{a})\in \prod_{a\in \mathsf{Q}_{0}}\mathrm{GL}(V_{a})$,
then 
$g\cdot (x_{\rho})_{\rho\in \mathsf{Q}_{1}}\in \mathrm{Rep\,}(\mathsf{Q},V,\alpha)$ 
consists of 
$x'_{\rho}=g_{t(\rho)}x_{\rho}g_{s(\rho)}^{-1}
\in \mathrm{Hom}_{\mathbb{C}}(V_{s(\rho)},V_{t(\rho)})$.
   
Let  
$M=(M_{a},x^{M}_{\rho})_{a\in \mathsf{Q}_{0},\rho\in \mathsf{Q}_{1}}$
and 
$N=(N_{a},x^{N}_{\rho})_{a\in \mathsf{Q}_{0},\rho\in \mathsf{Q}_{1}}$ 
be representations of a quiver $\mathsf{Q}$.
Then $N$ is called the {\em subrepresentation} of M
if we have the following:
\begin{enumerate}
          \item There exists a direct sum decomposition
		  $M_{a}=N_{a}\oplus N'_{a}$ for each $a\in \mathsf{Q}_{0}$.
	  \item For each $\rho\colon a\rightarrow b\in \mathsf{Q}_{1}$,
              the equality $x^{M}_{\rho}|_{N_{a}}
              =x^{N}_{\rho}$ holds.
\end{enumerate}
In this case we denote $N\subset M$.
Moreover if  
\begin{itemize}
	      \item[3.] 
		      for each $\rho\colon a\rightarrow b\in \mathsf{Q}_{1}$,
              we have $x^{M}_{\rho}|_{N'_{a}}\subset N_{b}^{'}$,
\end{itemize}
then we denote $M=N\oplus N'$ as {\em direct sum} 
 where 
$N'
=(N'_{a},x^{M}_{\rho}|_{N_{a}^{'}})_{a\in \mathsf{Q}_{0},\rho\in \mathsf{Q}_{1}}$.

The representation $M$ is said to be {\em irreducible} if 
$M$ has no subrepresentations other than $M$ and $\{0\}$. 
Here $\{0\}$ is the representation of $\mathsf{Q}$ 
which consists of zero vector spaces and zero
linear maps.
On the other hand if any direct sum 
decomposition  $M=N\oplus N'$
satisfies either  $N=\{0\}$ or $N'=\{0\}$, then
$M$ is said to be {\em indecomposable}.

Let us recall the double of 
a quiver $\mathsf{Q}$.
\begin{df}[double quiver]
\normalfont
	Let $\mathsf{Q}=(\mathsf{Q}_{0},\mathsf{Q}_{1})$ be a finite quiver. 
	Then the {\em double quiver} $\overline{\mathsf{Q}}$ of $\mathsf{Q}$ is the 
        quiver obtained by 
        adjoining the reverse arrow $\rho^{*}\colon b\rightarrow a$ 
        to each arrow $\rho\colon a\rightarrow b$. Namely
	$\overline{\mathsf{Q}}:=(\overline{\mathsf{Q}}_{0}:=
	\mathsf{Q}_{0},\overline{\mathsf{Q}}_{1}:=
	\mathsf{Q}_{1}\cup \mathsf{Q}_{1}^{*})$ where 
	$\mathsf{Q}_{1}^{*}:=\{\rho^{*}\colon t(\rho)\rightarrow s(\rho)\mid 
	\rho \in \mathsf{Q}_{1}\}$. 
\end{df}

Let us note that for each $\rho \in \mathsf{Q}_{1}$ we can identify 
\[
	\mathrm{Hom}_{\mathbb{C}}(\mathbb{C}^{\alpha_{s(\rho)}},\mathbb{C}^{
\alpha_{t(\rho)}})^{*}\cong 
\mathrm{Hom}_{\mathbb{C}}(\mathbb{C}^{\alpha_{s(\rho^{*})}},\mathbb{C}^{
\alpha_{t(\rho^{*})}})
\]
by the trace pairing.
Thus the representation space $\mathrm{Rep}(\overline{\mathsf{Q}},\alpha)$ can be 
identified with the cotangent bundle
\[
	T^{*}\mathrm{Rep}(\mathsf{Q},\alpha)\cong \mathrm{Rep}(
	\overline{\mathsf{Q}},\alpha).
\]
In this case the canonical symplectic form is given by 
\[
	\omega(x,y)=\sum_{\rho\in \mathsf{Q}_{1}}(
	\mathrm{tr}(x_{\rho}y_{\rho^{*}})-\mathrm{tr}(x_{\rho^{*}}y_{\rho})).
\]
Thus we can see $\mathrm{Rep(\overline{Q},\alpha)}$ as a complex symplectic
manifold with the action of 
\[
	\mathbf{G}:=\prod_{a\in \mathsf{Q}_{0}}
	\mathrm{GL}(\alpha_{a},\mathbb{C}).
\]
Then the following map  is a moment map; 
\[
\mu_{\alpha}\colon \mathrm{Rep}(\overline{\mathsf{Q}},\alpha)\rightarrow 
\prod_{a\in \mathsf{Q}_{0}}M(\alpha_{a},\mathbb{C})
\]
whose images 
$(\mu_{\alpha}(x)_{a})_{a\in \mathsf{Q}_{0}}$ are given by
\[
	\mu_{\alpha}(x)_{a}=\sum_{
	\substack{\rho\in \mathsf{Q}_{1}\\ t(\rho)=a}}x_{\rho}
        x_{\rho^{*}}-
        \sum_{
	\substack{\rho\in \mathsf{Q}_{1}\\s(\rho)=a}}x_{\rho^{*}}
        x_{\rho}.
\]      
Now we are ready to define quiver varieties.
\begin{df}[quiver variety]\normalfont
Let us take
a collection of complex numbers
$\lambda=(\lambda_{a})\in \mathbb{C}^{\mathsf{Q}_{0}}$. 
Then a {\em quiver variety} is the affine quotient 
\[
	\mathfrak{M}_{\lambda}(\mathsf{Q},\alpha):=
	\mu^{-1}(\lambda)/\!/\mathbf{G}:=
	\mathrm{Specm\,}\mathbb{C}[\mu^{-1}(\lambda)]^{\mathbf{G}}.
\]
\end{df}
Here
$\mathbb{C}[\mu^{-1}(\lambda)]$ is 
the coordinate ring of $\mu^{-1}(\lambda)$. 

Since this variety might have singularities, 
we moreover consider the regular part
of this variety defined as follows.
\begin{df}\normalfont
We say that $x\in \mathrm{Rep}(\overline{\mathsf{Q}},\alpha)$ 
is {\em stable} if 
\begin{enumerate}
	\item the orbit $\mathbf{G}\cdot x$
is closed,
\item  stabilizer of $x$ in 
$\mathbf{G}/\mathbb{C}^{\times}$ 
is finite.
\end{enumerate}
\end{df}
Here we note that $\mathbb{C}^{\times}
\subset \mathbf{G}$
acts trivially on $\mathrm{Rep}(\overline{\mathsf{Q}},\alpha)$.

It is known that the stability of $x$ assures that the morphism
\[
	\begin{array}{lccc}
		\sigma_{x}\colon& 
		\mathbf{G}&
		\longrightarrow &
		\mathrm{Rep}(\overline{\mathsf{Q}},\alpha)\\
		&g&\longmapsto&g\cdot x
	\end{array}
\]
is proper.

In our case moreover the stability 
can be rephrased by the irreducibility of representations.
\begin{thm}[King \cite{Kin}]
	$x\in \mathrm{Rep}(\overline{\mathsf{Q}},\alpha)$ is stable 
	if and only if $x$ is an irreducible representation.
\end{thm}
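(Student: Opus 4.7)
The plan is to prove both implications by standard GIT arguments: Schur's lemma handles the stabilizer condition, while the Hilbert--Mumford criterion converts closedness of the orbit into a statement about subrepresentations. For the direction $(\Leftarrow)$, assume $x$ is irreducible. The stabilizer of $x$ in $\mathbf{G}$ consists of tuples $(g_a)$ with $g_{t(\rho)} x_\rho = x_\rho g_{s(\rho)}$ for all $\rho \in \overline{\mathsf{Q}}_1$, i.e., exactly the endomorphism ring of $x$ as a representation of $\overline{\mathsf{Q}}$; by Schur's lemma this equals $\mathbb{C} \cdot \mathrm{id}$, so the stabilizer in $\mathbf{G}/\mathbb{C}^{\times}$ is trivial. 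For closedness I would apply Hilbert--Mumford: if $\mathbf{G} \cdot x$ is not closed, there exists a one-parameter subgroup $\mu \colon \mathbb{C}^\times \to \mathbf{G}$ such that $\lim_{t \to 0} \mu(t) \cdot x$ exists and lies outside the orbit. Such a $\mu$ determines a $\mathbb{Z}$-grading $V_a = \bigoplus_{n} V_a^{(n)}$, and convergence of the limit is equivalent to $x_\rho(V_{s(\rho)}^{(n)}) \subset \bigoplus_{m \ge n} V_{t(\rho)}^{(m)}$ for every arrow $\rho$ of $\overline{\mathsf{Q}}$. The partial sums $F^{n_0} := \bigoplus_{n \ge n_0} V^{(n)}$ then form a descending chain of subrepresentations of $x$, which by irreducibility must be trivial, so the grading is concentrated in a single weight; then $\mu(t)$ is a scalar and fixes $x$, forcing the limit to lie in the orbit, a contradiction.

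For the direction $(\Rightarrow)$, I would argue by contrapositive. Suppose $N \subsetneq M$ is a proper nontrivial subrepresentation, and fix splittings $V_a = N_a \oplus N'_a$. Define the one-parameter subgroup $\mu(t) := (t \cdot \mathrm{id}_{N_a} \oplus \mathrm{id}_{N'_a})_{a \in \mathsf{Q}_0}$ in $\mathbf{G}$. A block-matrix calculation shows that $\mu(t) \cdot x_\rho \mu(t)^{-1}$ has its $N_a \to N'_b$ block killed by the subrepresentation hypothesis and its $N'_a \to N_b$ block multiplied by $t$, while the two diagonal blocks are unchanged. Consequently $y := \lim_{t \to 0} \mu(t) \cdot x$ exists in $\mathrm{Rep}(\overline{\mathsf{Q}}, \alpha)$ and is visibly fixed by $\mu$. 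If the orbit of $x$ were closed, then $y = g \cdot x$ for some $g \in \mathbf{G}$, and the conjugate $g^{-1} \mu g$ would be a one-parameter subgroup of the stabilizer of $x$; since $N$ is neither $0$ nor $M$, $\mu$ is not a scalar and its image in $\mathbf{G}/\mathbb{C}^\times$ is a nontrivial $\mathbb{C}^\times$, violating finiteness of the stabilizer modulo $\mathbb{C}^\times$. Hence at least one of the two stability conditions must fail.

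The main obstacle is the Hilbert--Mumford step in the $(\Leftarrow)$ direction: one has to verify carefully that the weight filtration produced by a destabilizing one-parameter subgroup is preserved by every $x_\rho$ and every $x_{\rho^*}$, so that it genuinely defines a subrepresentation of the doubled quiver $\overline{\mathsf{Q}}$. No moment-map (preprojective) relation is invoked anywhere, so the statement is really King's characterization specialized to the trivial stability parameter, applied to $\overline{\mathsf{Q}}$ viewed as an ordinary quiver.
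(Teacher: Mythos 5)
Your argument is correct. Note first that the paper offers no proof of this statement at all: it is quoted as a citation to King's paper, so there is nothing internal to compare against. Your two-part argument is the standard one and is complete in its essentials: Schur's lemma (valid here since the representations are finite-dimensional over $\mathbb{C}$) identifies the stabilizer of an irreducible $x$ with $\mathbb{C}^{\times}$; the closedness of the orbit follows from Kempf's theorem for reductive groups acting on affine varieties (be careful to cite that version rather than the projective Hilbert--Mumford criterion --- you need a one-parameter subgroup whose limit lands in the unique closed orbit inside $\overline{\mathbf{G}\cdot x}$), combined with your correct observation that the weight filtration $F^{n_{0}}=\bigoplus_{n\ge n_{0}}V^{(n)}$ is $x_{\rho}$-invariant for every arrow of $\overline{\mathsf{Q}}$ precisely when the limit exists, so irreducibility forces the grading into a single weight and the subgroup acts by conjugation by scalars. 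For the converse your block computation is right: invariance of $N$ kills exactly the block that would be scaled by $t^{-1}$, the limit is the block-diagonal degeneration, and if the orbit were closed the conjugated one-parameter subgroup $g^{-1}\mu g$ would lie in the stabilizer of $x$; since its image in $\mathbf{G}/\mathbb{C}^{\times}$ is connected and nontrivial (as $\mu$ is non-scalar when $N$ is proper and nonzero), it is a positive-dimensional torus, contradicting finiteness. Your closing remark is also accurate: no moment-map relation enters, and the statement is King's criterion for the trivial stability parameter applied to $\overline{\mathsf{Q}}$ as an ordinary quiver.
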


Thus let us consider the (possibly empty) subspace
\[
	\mu^{-1}(\lambda)^{\text{irr}}:=\{
	x\in \mu^{-1}(\lambda)\mid x\text{ is irreducible}\}.
\]
Then the action of $\mathbf{G}/\mathbb{C}^{\times}$ on this space 
is proper and moreover free (see King \cite{Kin}). Thus the symplectic reduction
\[
	\mathfrak{M}^{\text{reg}}_{\lambda}(\mathsf{Q},\alpha):=
	\mu^{-1}(\lambda)^{\text{irr}}/\mathbf{G}
\]
can be seen as a complex manifold with the symplectic structure, i.e.,
a complex symplectic manifold.
We call this manifold the quiver variety too.
\begin{rem}\normalfont
	The above quiver varieties are special ones of Nakajima quiver 
	varieties which enjoy rich geometric properties and applications 
	for representation theory and theoretical physics and so on (see 
	\cite{Nak} for instance).
\end{rem}
\subsubsection{Some geometry of quiver varieties}\label{some geometry}
As we noted before, the complex symplectic manifold 
$\mathfrak{M}^{\text{reg}}_{\lambda}(\mathsf{Q},\alpha)$ is possibly 
empty. 
Thus next we see a necessary and sufficient condition for the non-emptiness
of $\mathfrak{M}^{\text{reg}}_{\lambda}(\mathsf{Q},\alpha)$ obtained by
Crawley-Boevey in \cite{C1}.

In order to explain the condition,
recall the root system of a quiver $\mathsf{Q}$
(cf. \cite{Kac}).
Let $\mathsf{Q}$ be a finite quiver.  
From the  {\em Euler form}  $$\langle \alpha,\beta\rangle 
:=\sum_{a\in \mathsf{Q}_{0}}\alpha_{a}\beta_{a}-\sum_{\rho\in \mathsf{Q}_{1}}
\alpha_{s(\rho)}\beta_{t(\rho)},$$  
a symmetric bilinear form and quadratic form are  defined by 
\begin{align*}(\alpha,\beta)&:=\langle \alpha,\beta\rangle
+\langle \beta,\alpha\rangle,\\
	      q(\alpha)
	      &:=\frac{1}{2}(\alpha,\alpha)
\end{align*}      
	      and  set
      $p(\alpha):=1-q(\alpha)$. 
      Here $\alpha,\beta\in \mathbb{Z}^{\mathsf{Q}_{0}}$.

For each vertex $a\in \mathsf{Q}_{0}$, define 
 $\epsilon_{a}\in \mathbb{Z}^{\mathsf{Q}_{0}}$ ($a\in \mathsf{Q}_{0}$) 
  so that $(\epsilon_{a})_a=1$, $(\epsilon_{a})_{b}=0$, 
$(b\in \mathsf{Q}_{0}\backslash\{a\})$.
We call $\epsilon_{a}$ a {\em fundamental root} if 
the vertex $a$ has no edge-loop, i.e., there is no arrow $\rho$ such that 
$s(\rho)=t(\rho)=a$.
Denote by $\Pi$ the set of fundamental roots.
For a fundamental root $\epsilon_{a}$, define the {\em fundamental 
reflection} $s_{a}$ by 
\[
	s_{a}(\alpha):=\alpha-(\alpha,\epsilon_{a})\epsilon_{a}
	\text{ for }\alpha\in \mathbb{Z}^{\mathsf{Q}_{0}}.
\]
The group $W\subset \mathrm{Aut\,}\mathbb{Z}^{\mathsf{Q}_{0}}$ generated
by all fundamental reflections is called the {\em Weyl group} of 
the quiver $\mathsf{Q}$. Note that the bilinear form $(\,,\,)$ is 
$W$-invariant. 
Similarly we can define the reflection $r_{a}\colon 
\mathbb{C}^{\mathsf{Q}_{0}}\rightarrow
\mathbb{C}^{\mathsf{Q}_{0}}$ by 
\[r_{a}(\lambda)_{b}:=
\lambda_{b}-(\epsilon_{a},\epsilon_{b})\lambda_{a}
\]
for $\lambda\in \mathbb{C}^{\mathsf{Q}_{0}}$ and  $a,b\in \mathsf{Q}_{0}$.
Define the set of {\em real roots} by 
\[
	\Delta^{\text{re}}:=\bigcup_{w\in W}w(\Pi).
\]

For an element $\alpha=(\alpha_{a})_{a\in \mathsf{Q}_{0}}
\in \mathbb{Z}^{\mathsf{Q}_{0}}$ the {\em support} of $\alpha$
is the set of $\epsilon_{a}$ such that $\alpha_{a}\neq 0$, and 
denoted by $\mathrm{supp\,}(\alpha)$.
We say the support of $\alpha$ is {\em connected} if 
the subquiver consisting of 
the set of vertices $a$ satisfying 
$\epsilon_{a}\in \mathrm{supp\,}(\alpha)$ and 
all arrows joining these vertices, is connected.
Define the {\em fundamental set} $F\subset \mathbb{Z}^{\mathsf{Q}_{0}}$ by
\[
	F:=
	\left\{\alpha\in (\mathbb{Z}_{\ge 0})^{\mathsf{Q}_{0}}\backslash\{0\}
        \mid (\alpha,\epsilon)\le 0\text{ for all }\epsilon\in\Pi,\,
        \text{support of }\alpha\text{ is connected}
        \right\}.
\]
Then define the set of {\em imaginary roots} by
\[
	\Delta^{\text{im}}:=\bigcup_{w\in W}w(F\cup -F).
\]
Then the {\em root system} is
\[
	\Delta:=\Delta^{\text{re}}\cup\Delta^{\text{im}}.
\]
An element $\Delta^{+}:=
\alpha\in \Delta\cap(\mathbb{Z}_{\ge 0})^{\mathsf{Q}_{0}}$
is called a {\em positive root}.
    
Now we are ready to see Crawley-Boevey's theorem.
For a fixed $\lambda=(\lambda_{a})\in \mathbb{C}^{\mathsf{Q}_{0}}$,
the set $\Sigma_{\lambda}$ consists of 
the positive roots satisfying 
\begin{enumerate}
              \item $\lambda\cdot \alpha:=
		      \sum_{a\in \mathsf{Q}_{0}}\lambda_{a}\alpha_{a}=0$,
              \item if there exists a decomposition 
                  $\alpha=\beta_{1}+\beta_{2}+\cdots,$
                  with $\beta_{i}\in \Delta^{+}$ and 
                  $\lambda\cdot \beta_{i}=0$, 
                  then 
                  $p(\alpha)>p(\beta_{1})+p(\beta_{2})+\cdots.$
\end{enumerate}

\begin{thm}[Crawley-Boevey. Theorem 1.2 in \cite{C1}]\label{CB}
	Let $\mathsf{Q}$ be a finite quiver and $\overline{\mathsf{Q}}$ the double of $\mathsf{Q}$.
        Let us fix a dimension vector 
	$\alpha\in (\mathbb{Z}_{\ge 0})^{\mathsf{Q}_{0}}$ and 
	$\lambda\in \mathbb{C}^{\mathsf{Q}_{0}}.
        $
	Then  $\mu^{-1}(\lambda)^{\text{irr}}\subset 
	\mathrm{Rep\,}(\overline{\mathsf{Q}},\alpha)$
	is nonempty 
        if and only if 
	$\alpha\in \Sigma_{\lambda}$.
	Furthermore, in this case $\mu^{-1}(\lambda)$ is 
	an irreducible algebraic variety and $\mu^{-1}(\lambda)^{\text{irr}}$ 
	is dense in $\mu^{-1}(\lambda)$.
\end{thm}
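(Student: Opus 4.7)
The plan is to establish both directions by induction on $\dim \alpha = \sum_a \alpha_a$, with reflection functors as the principal engine for the inductive step.

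For the necessity direction, suppose $x \in \mu^{-1}(\lambda)^{\text{irr}}$. Taking the sum of traces of the moment map equations $\mu_\alpha(x)_a = \lambda_a I_{\alpha_a}$ over all $a \in \mathsf{Q}_0$ causes the commutator-like terms $\mathrm{tr}(x_\rho x_{\rho^*}) - \mathrm{tr}(x_{\rho^*} x_\rho)$ to cancel, yielding $\lambda \cdot \alpha = 0$. For the strict inequality condition on $p$, I would analyze the stratification of $\mu^{-1}(\lambda)$ by Jordan–Hölder types: given a decomposition $\alpha = \beta_1 + \cdots + \beta_r$ with each $\beta_i \in \Delta^+$ and $\lambda \cdot \beta_i = 0$, representations $x$ admitting a filtration with simple subquotients of dimension vectors $\beta_i$ form a constructible subset whose dimension one computes via the standard formula using $p(\beta_i)$ and $\mathrm{Ext}^1$ spaces. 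The irreducible stratum corresponds to the trivial decomposition and has dimension $\dim \mathbf{G} - 1 + 2p(\alpha)$; forcing this stratum to be dense (which is necessary for $\mu^{-1}(\lambda)^{\text{irr}}$ to be nonempty and open) yields $p(\alpha) > \sum_i p(\beta_i)$ whenever the decomposition is nontrivial.

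For the sufficiency direction, I would proceed by induction, using Crawley-Boevey's reflection functors at vertices $a \in \mathsf{Q}_0$ with no edge-loop. The crucial property is that when $\lambda_a \neq 0$, the reflection functor induces a bijection between $\mu^{-1}(\lambda)^{\text{irr}}$ on dimension vector $\alpha$ and $\mu^{-1}(r_a \lambda)^{\text{irr}}$ on dimension vector $s_a \alpha$, and moreover preserves the condition of being in $\Sigma$. The inductive strategy is then: if there exists $a$ with $\lambda_a \neq 0$ and $(\alpha, \epsilon_a) > 0$, apply $s_a$ to strictly decrease $\dim \alpha$ and invoke induction. If no such vertex exists, then either $\alpha$ is supported on vertices with $\lambda_a = 0$ and edge-loops, or $\alpha$ lies in the fundamental region $F$. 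In the fundamental region case, one constructs an irreducible representation directly: generic choices of the linear maps $x_\rho, x_{\rho^*}$ subject to the moment map equations produce irreducible representations because the defining inequalities in $\Sigma_\lambda$ rule out all proper subrepresentations by a dimension count on the Grassmannian of possible subspaces. The base cases are the coordinate vectors $\epsilon_a$, where $\Sigma_\lambda$ contains $\epsilon_a$ iff $\lambda_a = 0$ and the corresponding $1$-dimensional representation is trivially irreducible.

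The final claims — that $\mu^{-1}(\lambda)$ is irreducible and $\mu^{-1}(\lambda)^{\text{irr}}$ dense — follow once the dimension formula $\dim \mu^{-1}(\lambda) = \dim \mathbf{G} - 1 + 2p(\alpha)$ is established for every $\alpha \in \Sigma_\lambda$; irreducibility then follows because any other component would have to be contained in the complement of the stable locus, which by the strict inequality $p(\alpha) > \sum p(\beta_i)$ has strictly smaller dimension. Since reflection functors are algebraic isomorphisms between the relevant open loci, both irreducibility and density propagate along the induction.

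The hardest step, I expect, is the direct construction of irreducible representations for $\alpha$ in the fundamental region together with the sharp dimension count on the non-irreducible stratum. The subtlety is twofold: one must produce at least one irreducible representation (an existence statement on an affine variety cut out by the non-linear moment map equations), and one must show that all proper-subrepresentation loci, indexed by decompositions of $\alpha$, are simultaneously of lower dimension. The strict inequality in condition (2) of the definition of $\Sigma_\lambda$ is precisely what is needed here, but verifying it demands the Euler-form bookkeeping together with the fact that an indecomposable representation in the fundamental region uses every generator of the underlying subquiver, which requires the connectedness hypothesis in the definition of $F$.
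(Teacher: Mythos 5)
The paper does not actually prove this statement: it is quoted from Crawley-Boevey \cite{C1}, Theorem 1.2, so your proposal has to be measured against the original proof rather than against anything in this note. Your outline does track the broad architecture of that proof --- the trace identity giving $\lambda\cdot\alpha=0$, a dimension count over strata indexed by decompositions for the strict inequality, reflection functors for the inductive reduction, and a separate analysis in the fundamental region --- but at each of the genuinely hard points the proposal asserts rather than argues, and two of the assertions do not go through as stated. On the necessity side, you never address why $\alpha$ must be a positive root at all; this is part of membership in $\Sigma_{\lambda}$ and is not a formality (it rests on the fact that dimension vectors of indecomposable representations in $\mu^{-1}(\lambda)$ are roots, itself proved by the same reflection reduction). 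More seriously, the inference ``$\mu^{-1}(\lambda)^{\text{irr}}$ nonempty and open forces it to be dense, hence the stratum comparison gives $p(\alpha)>\sum_i p(\beta_i)$'' is circular: openness plus nonemptiness gives density only after one knows $\mu^{-1}(\lambda)$ is irreducible, which is one of the conclusions. The correct mechanism is local: at an irreducible point $x$ the differential of $\mu$ surjects onto the trace-zero hyperplane, so $\mu^{-1}(\lambda)$ is smooth at $x$ of dimension exactly $\dim\mathbf{G}-1+2p(\alpha)$, and this is then compared with an upper bound for the dimension of the locus of representations admitting a filtration with subquotients of dimension vectors $\beta_{i}$.

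On the sufficiency side, ``generic choices of the $x_{\rho}$ subject to the moment map equations'' presupposes that $\mu^{-1}(\lambda)$ is nonempty of the expected dimension; that is essentially Crawley-Boevey's Theorem 1.1 on flatness of the moment map, proved by counting representations over finite fields via Kac's theorem, and it cannot be extracted from a Grassmannian count alone. Even granting it, existence of an irreducible point for $\alpha$ in the fundamental set is not a pure genericity statement: one must treat separately the degenerate configurations (for instance $\alpha$ a multiple of the null root of an affine subquiver), where generic points of $\mu^{-1}(\lambda)$ are decomposable. Finally, your induction trichotomy omits the case of a loop-free vertex $a$ with $(\alpha,\epsilon_{a})>0$ but $\lambda_{a}=0$, where the reflection functor is unavailable and $\alpha\notin F$; this case has to be shown vacuous for $\alpha\in\Sigma_{\lambda}\setminus\{\epsilon_{a}\}$, which it is, because the decomposition $\alpha=s_{a}(\alpha)+(\alpha,\epsilon_{a})\epsilon_{a}$ into positive roots orthogonal to $\lambda$ would violate condition (2), using $p(\epsilon_{a})=0$ and the $W$-invariance of $p$. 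As written, the proposal is a correct table of contents for the proof, but the content of each chapter is missing.
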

Moreover Crawley-Boevey showed the following geometric properties of 
quiver varieties.
\begin{thm}[Crawley-Boevey Corollary 1.4 in \cite{C1}]\label{CB2}
	If $\alpha\in \Sigma_{\lambda}$ then
	the quiver variety 
	$\mathfrak{M}_{\lambda}(\mathsf{Q},\alpha)$
	is a reduced and irreducible variety of dimension $2p(\alpha)$.
\end{thm}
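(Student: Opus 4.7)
My plan is to reduce everything to Theorem \ref{CB} together with a local submersion/dimension argument for $\mu_{\alpha}$. Since $\mathfrak{M}_{\lambda}(\mathsf{Q},\alpha)=\mathrm{Specm\,}\mathbb{C}[\mu^{-1}(\lambda)]^{\mathbf{G}}$, any reducedness and irreducibility for the moment-map fiber will propagate to the GIT quotient automatically, and the dimension can be read off on the dense stable locus where $\mathbf{G}/\mathbb{C}^{\times}$ acts freely.

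Reducedness and irreducibility of $\mathfrak{M}_{\lambda}(\mathsf{Q},\alpha)$ are immediate: Theorem \ref{CB} tells us $\mu^{-1}(\lambda)$ is an irreducible variety, so $\mathbb{C}[\mu^{-1}(\lambda)]$ is an integral domain; taking $\mathbf{G}$-invariants preserves this, so $\mathbb{C}[\mu^{-1}(\lambda)]^{\mathbf{G}}$ is a domain and its maximal spectrum is integral.

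For the dimension, the image of $\mu_{\alpha}$ lies in the codimension-one traceless subspace of $\prod_{a}M(\alpha_{a},\mathbb{C})$, because $\sum_{a}\mathrm{tr\,}\mu_{\alpha}(x)_{a}=0$ for every $x$. At any $x\in \mu^{-1}(\lambda)^{\mathrm{irr}}$ the differential $(d\mu_{\alpha})_{x}$ is, by the moment-map identity, dual to the infinitesimal $\mathbf{G}$-action, whose kernel equals the stabilizer of $x$; by irreducibility this stabilizer is $\mathbb{C}\cdot I$. Therefore $(d\mu_{\alpha})_{x}$ is surjective onto the traceless locus, so $\mu_{\alpha}$ is submersive there. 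Combined with the irreducibility of $\mu^{-1}(\lambda)$ from Theorem \ref{CB},
\[
\dim \mu^{-1}(\lambda) = \dim \mathrm{Rep}(\overline{\mathsf{Q}},\alpha) - \Bigl(\sum_{a\in \mathsf{Q}_{0}}\alpha_{a}^{2}-1\Bigr) = 2\sum_{\rho\in \mathsf{Q}_{1}}\alpha_{s(\rho)}\alpha_{t(\rho)} - \sum_{a}\alpha_{a}^{2}+1.
\]
By King's result \cite{Kin} the action of $\mathbf{G}/\mathbb{C}^{\times}$ on $\mu^{-1}(\lambda)^{\mathrm{irr}}$ is free and proper, and this locus is dense in $\mu^{-1}(\lambda)$ by Theorem \ref{CB}, so $\mathfrak{M}_{\lambda}^{\mathrm{reg}}(\mathsf{Q},\alpha)$ is a principal $\mathbf{G}/\mathbb{C}^{\times}$-bundle sitting as a dense open subset of $\mathfrak{M}_{\lambda}(\mathsf{Q},\alpha)$. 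Thus
\begin{align*}
\dim \mathfrak{M}_{\lambda}(\mathsf{Q},\alpha)
&= \dim \mu^{-1}(\lambda) - \dim \mathbf{G}/\mathbb{C}^{\times}\\
&= 2\sum_{\rho}\alpha_{s(\rho)}\alpha_{t(\rho)} - 2\sum_{a}\alpha_{a}^{2}+2 = 2p(\alpha).
\end{align*}

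The only delicate step in this outline is confirming that $\dim \mu^{-1}(\lambda)$ attains the dimension predicted by the local submersion: the submersion produces smooth points of the correct dimension, and irreducibility of the fiber (from Theorem \ref{CB}) is what forces the global dimension to match. All of the genuinely deep combinatorics and reflection-functor analysis is already absorbed into Theorem \ref{CB}, after which the present corollary becomes essentially bookkeeping.
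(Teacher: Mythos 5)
The paper offers no proof of this statement---it is quoted verbatim from Crawley-Boevey \cite{C1}, Corollary 1.4---so your argument is necessarily a reconstruction. The core of it, namely the dimension count, is correct and is in fact how Crawley-Boevey derives his Corollary 1.4: at a point $x$ of $\mu^{-1}(\lambda)^{\text{irr}}$ the moment-map identity forces $\mathrm{Im}\,(d\mu_{\alpha})_{x}$ to be the annihilator of $\mathrm{Lie}(\mathrm{Stab}(x))=\mathbb{C}(I,\ldots,I)$, i.e.\ the traceless hyperplane, so the irreducible locus is smooth of dimension $2\sum_{\rho}\alpha_{s(\rho)}\alpha_{t(\rho)}-\sum_{a}\alpha_{a}^{2}+1$; density of that locus in the irreducible fiber (Theorem \ref{CB}) pins down $\dim\mu^{-1}(\lambda)$, and subtracting $\dim\mathbf{G}/\mathbb{C}^{\times}$ over the free locus gives $2p(\alpha)$. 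That arithmetic checks out.

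The genuine gap is in the reducedness step. You write that irreducibility of $\mu^{-1}(\lambda)$ makes $\mathbb{C}[\mu^{-1}(\lambda)]$ an integral domain, but this is circular: the whole content of ``reduced'' in Corollary 1.4 is that the \emph{scheme-theoretic} fiber---the quotient of $\mathbb{C}[\mathrm{Rep}(\overline{\mathsf{Q}},\alpha)]$ by the ideal generated by the entries of $\mu_{\alpha}-\lambda$---has no nilpotents, and an irreducible scheme need not be reduced ($\mathrm{Spec}\,\mathbb{C}[x]/(x^{2})$ is irreducible). The version of Theorem \ref{CB} quoted in this paper deliberately omits reducedness, so you cannot import it for free; if instead you silently pass to the reduced structure, the conclusion ``reduced'' becomes vacuous rather than proved. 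The fix uses exactly the data your dimension argument already produces: after discarding the one trace relation, $\mu^{-1}(\lambda)$ is cut out by $\sum_{a}\alpha_{a}^{2}-1$ equations, and you have shown its unique component has precisely that codimension; hence it is a complete intersection, therefore Cohen--Macaulay with no embedded primes, and it is generically reduced because it is smooth along the dense locus $\mu^{-1}(\lambda)^{\text{irr}}$. Serre's criterion ($R_{0}+S_{1}$) then gives that $\mathbb{C}[\mu^{-1}(\lambda)]$ is reduced, hence a domain by irreducibility, and only at that point does your ``subring of a domain'' argument for the invariant ring, and with it the reducedness and irreducibility of $\mathfrak{M}_{\lambda}(\mathsf{Q},\alpha)$, go through. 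This is precisely the content of Theorem 1.2 of \cite{C1} that the survey's abridged statement of Theorem \ref{CB} leaves out.
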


Combining these results, we have the following non-emptiness condition
of regular parts of quiver varieties.
\begin{cor}[Crawley-Boevey \cite{C1}]
	The quiver variety 
	$\mathfrak{M}^{\text{reg}}_{\lambda}(\mathsf{Q},\alpha)$
	is non-empty if and only if $\alpha\in\Sigma_{\lambda}$.
	Furthermore in this case, it is a connected complex symplectic manifold
	of dimension $2p(\alpha)$.
\end{cor}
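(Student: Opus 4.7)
The corollary is essentially a packaging of Theorems \ref{CB} and \ref{CB2}, together with the local structural statements already established just before the corollary. My plan is to split the ``if and only if'' from the structural claim.

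For the non-emptiness equivalence, I would argue as follows. By definition $\mathfrak{M}^{\text{reg}}_{\lambda}(\mathsf{Q},\alpha)=\mu^{-1}(\lambda)^{\text{irr}}/\mathbf{G}$, so it is non-empty precisely when $\mu^{-1}(\lambda)^{\text{irr}}$ is non-empty. Theorem \ref{CB} identifies this condition with $\alpha\in\Sigma_{\lambda}$, giving both implications at once.

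Next I would verify the structure when $\alpha\in\Sigma_\lambda$. The irreducibility of a representation is an open condition on $\mathrm{Rep}(\overline{\mathsf{Q}},\alpha)$ (since stability in the sense of King is open, by Theorem of King cited above), hence $\mu^{-1}(\lambda)^{\text{irr}}$ is a $\mathbf{G}$-invariant open subset of $\mu^{-1}(\lambda)$. By Theorem \ref{CB} it is moreover dense, and by Theorem \ref{CB2} the ambient variety $\mathfrak{M}_{\lambda}(\mathsf{Q},\alpha)$ is reduced and irreducible of dimension $2p(\alpha)$. Therefore $\mathfrak{M}^{\text{reg}}_{\lambda}(\mathsf{Q},\alpha)$ is a non-empty open subset of an irreducible variety, hence itself irreducible and in particular connected, and of the same dimension $2p(\alpha)$.

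Finally, the manifold and symplectic structure were already noted just above the corollary: the action of $\mathbf{G}/\mathbb{C}^{\times}$ on $\mu^{-1}(\lambda)^{\text{irr}}$ is free and proper (by the stability criterion together with King's result), so the general Marsden--Weinstein machinery recalled in the previous subsection endows the quotient with a holomorphic symplectic form $\underline{\omega}$. Combining this with the connectedness and dimension count yields the stated conclusion. I do not foresee a genuine obstacle here: all the non-trivial content has been imported from Theorems \ref{CB} and \ref{CB2}, so the only thing to be careful about is the transition from the algebro-geometric statements about $\mathfrak{M}_\lambda$ (viewed as $\mathrm{Specm}\,\mathbb{C}[\mu^{-1}(\lambda)]^{\mathbf{G}}$) to the differential-geometric statements about $\mathfrak{M}^{\text{reg}}_\lambda$, which is handled by the openness of the irreducible locus and the freeness/properness of the $\mathbf{G}/\mathbb{C}^{\times}$-action there.
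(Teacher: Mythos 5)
Your proposal is correct and follows essentially the same route as the paper, which states this corollary without proof as a direct combination of Theorems \ref{CB} and \ref{CB2} together with the symplectic-reduction discussion of $\mu^{-1}(\lambda)^{\text{irr}}/\mathbf{G}$ given earlier. Your connectedness argument (openness and density of the irreducible locus inside the irreducible variety $\mu^{-1}(\lambda)$, then passing to the quotient) is exactly the mechanism the paper itself spells out in the analogous corollary for $\mathfrak{M}(\mathbf{B})$.
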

\subsection{Moduli spaces of stable meromorphic connections on trivial bundles}
\label{moduli}
Let us define moduli spaces of meromorphic connections on trivial bundles 
following Boalch's paper \cite{Boa1} (see also \cite{HY}).

Let
\[
	B=\mathrm{diag}(q_{1}(z^{-1})I_{n_{1}}+R_{1}z^{-1},\cdots
	q_{m}(z^{-1})I_{n_{m}}+R_{m}z^{-1})\in 
	\mathrm{GL}(n,\mathbb{C}(\!(z)\!))
\]
be an HTL normal form. The equivalent class of $B$ under formal holomorphic 
gauge transformations is 
\[
	\mathcal{O}_{B}:=\left\{X[B]\mid X\in \mathrm{GL}(n,\mathbb{C}[\![z]\!])
	\right\}.
\]
Let us consider another equivalent class of $B$ called the {\em truncated orbit}
of $B$. Let us consider the  projection 
 $$\iota\colon M(n,\mathbb{C}(\!(z)\!))\longrightarrow M(n,\mathbb{C}(\!(z)\!)/
 \mathbb{C}[\![z]\!]).$$
The map $\iota$ induces the action of $\mathrm{GL}(n,\mathbb{C}[\![z]\!])$ on 
$M(n,\mathbb{C}(\!(z)\!)/
\mathbb{C}[\![z]\!])$
from the adjoint action of that on  $M(n,\mathbb{C}(\!(z)\!))$.
 Namely 
for  $g\in \mathrm{GL}(n,\mathbb{C}[\![z]\!])$, $Z\in M(n,\mathbb{C}(\!(z)\!)/
\mathbb{C}[\![z]\!])$
define $g^{-1}Zg:=\iota (g\tilde{Z}g^{-1})$ where $\tilde{Z}\in M(n,\mathbb{C}(\!(z)\!))$
is chosen so that $\iota (\tilde{Z})=Z$.
We can see that this is independent of the choice of $\tilde{Z}$.

Then regarding $B$ as an element in $M(n,\mathbb{C}(\!(z)\!)/
\mathbb{C}[\![z]\!])$, we define the truncated orbit of $B$ by the action of 
$\mathrm{GL}(n,\mathbb{C}[\![z]\!])$ on $M(n,\mathbb{C}(\!(z)\!)/
\mathbb{C}[\![z]\!])$;
\[
	\mathcal{O}_{B}^{\text{\text{tru}}}:=\left\{g^{-1}Bg\in 
	M(n,\mathbb{C}(\!(z)\!)/
\mathbb{C}[\![z]\!])\,\middle|\, g\in \mathrm{GL}(n,\mathbb{C}[\![z]\!])
	\right\}.
\]

Let us note that 
\begin{align*}
	\iota(X[B])&=\iota(XBX^{-1})
	+\iota\left(\frac{d}{dz}X\cdot X^{-1}\right)\\
	&=\iota(XBX^{-1})\\
	&=XBX^{-1}.
\end{align*}
Thus $\iota$ gives the well-defined map
\[
	\iota\colon \mathcal{O}_{B}\longrightarrow \mathcal{O}^{\text{tru}}_{B}.
\]
Under a good situation, we can moreover show the following.
\begin{prop}
	If eigenvalues of $R_{i}$ never differ by any integer for each 
	$i=1,\ldots,m$, then 
	\[
		\iota^{-1}(\mathcal{O}^{\text{tru}}_{B})=\mathcal{O}_{B}
	\]
\end{prop}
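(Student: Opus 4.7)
The plan is to reduce to the following focused claim: for any $A$ of the form $B+C$ with $C\in M(n,\mathbb{C}[\![z]\!])$, there exists $Y\in \mathrm{GL}(n,\mathbb{C}[\![z]\!])$ with $Y[A]=B$. The reduction is short: if $A\in \iota^{-1}(\mathcal{O}_B^{\mathrm{tru}})$, then $\iota(A) = X\tilde{B}X^{-1}$ for some $X\in \mathrm{GL}(n,\mathbb{C}[\![z]\!])$. The same computation carried out just before the proposition yields $\iota(X^{-1}[A]) = X^{-1}\iota(A)X = \tilde{B}$, so $X^{-1}[A] = B+C$ for some $C\in M(n,\mathbb{C}[\![z]\!])$. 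Since $\mathcal{O}_{X^{-1}[A]} = \mathcal{O}_A$, I may assume $A = B + C$ from the outset.

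I would then proceed in two classical steps mirroring the proof of the Hukuhara-Turrittin-Levelt theorem. The first step is a formal splitting using the condition $q_i\neq q_j$: construct $Y_1\in \mathrm{GL}(n,\mathbb{C}[\![z]\!])$ with $Y_1(0)=I_n$ such that $Y_1[A]$ is block-diagonal with blocks
\[
A^{(i)} = q_i(z^{-1})I_{n_i} + R_iz^{-1} + H^{(i)}(z), \qquad H^{(i)}\in M(n_i,\mathbb{C}[\![z]\!]).
\]
Writing $Y_1 = I + \sum_{k\geq 1}z^kZ_k$ and expanding the off-diagonal blocks of the conjugation equation $Y_1' = Y_1[A]\cdot Y_1 - Y_1 A$, the leading operator acting on the $(i,j)$-block of $Z_k$ is multiplication by the Laurent polynomial $(q_j - q_i)(z^{-1})$, whose pole of order $\geq 2$ dominates all contributions coming from $R_i$, $R_j$ and the derivative. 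This allows one to invert the system order by order in $z$ and produce a well-defined formal power series $Y_1$.

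The second step is a within-block reduction via Levelt's theorem. Since $q_i(z^{-1})I_{n_i}$ is scalar it commutes with any block-wise holomorphic gauge and is therefore untouched, so it suffices to find $Y^{(i)}\in \mathrm{GL}(n_i,\mathbb{C}[\![z]\!])$ with $Y^{(i)}[R_iz^{-1} + H^{(i)}(z)] = R_iz^{-1}$. Writing $Y^{(i)} = I + \sum_{k\geq 1}z^kU_k$ and comparing coefficients of $z^{k-1}$, the recursion reads
\[
\bigl(\ad(R_i) - k\cdot \id\bigr)(U_k) = \Phi_k\bigl(U_1,\ldots,U_{k-1}; H^{(i)}\bigr),
\]
with a known right-hand side. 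The operator $\ad(R_i) - k\cdot \id$ on $M(n_i,\mathbb{C})$ is invertible precisely when no two eigenvalues of $R_i$ differ by $k$; the non-resonance hypothesis supplies this for every $k\geq 1$, so the recursion admits a unique solution. Composing, $Y := \diag(Y^{(1)},\ldots,Y^{(m)})\cdot Y_1$ satisfies $Y[A] = B$, proving $A\in \mathcal{O}_B$. I expect the main obstacle to be justifying the formal splitting rigorously: one must verify that the Sylvester-type off-diagonal equations remain consistently solvable at every order of $z$ in the presence of the irregular tail of $B$, and that the block-diagonal remainder really has the asserted shape. The within-block step is standard Levelt theory and uses the non-resonance hypothesis in exactly the way stated.
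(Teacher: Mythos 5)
Your proposal is correct and follows essentially the same route as the paper, whose proof consists of the single observation that the ``Frobenius method'' of \cite{W} upgrades $\iota(XAX^{-1})=\tilde{B}$ to $X'[A]=B$: your reduction to $A=B+C$ with $C$ holomorphic is exactly the paper's hypothesis, and your two steps (formal block splitting using $q_i\neq q_j$, where the order-$\ge 2$ pole of $(q_j-q_i)(z^{-1})$ makes the recursion solvable, followed by the non-resonant reduction $\bigl(\ad(R_i)-k\,\id\bigr)U_k=\Phi_k$ within each block) are precisely the classical argument being cited. The only cosmetic point is that the hypothesis should be read as ``never differ by any \emph{nonzero} integer,'' which is how you in fact use it.
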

\begin{proof}
	The Frobenius method (see \cite{W}) shows that 
	if 
	for $A\in M(n,\mathbb{C}(\!(z)\!))$ there exists $X\in 
	\mathrm{GL}(n,\mathbb{C}[\![z]\!])$ such that 
	$\iota(XAX^{-1})=B$, then there exists $X'\in 
	\mathrm{GL}(n,\mathbb{C}[\![z]\!])$ such that 
	$X'[A]=B$ under the assumption of this proposition.
\end{proof}

Let us consider a meromorphic connection $(\mathcal{O}^{n},\nabla)$ on
the trivial bundle over $\mathbb{P}^{1}$.
We write 
$\nabla_{a}\in \mathcal{O}_{B}\ (\text{resp. } \mathcal{O}_{B}^{\text{tru}})$ 
for $a\in \mathbb{P}^{1}$ if there exists $A_{a}\in M(n,\mathbb{C}(\!(z_{a})\!))$
such that 
$\nabla=d-A_{a}\,dz_{a}$ near $a$ and 
 $A_a\in \mathcal{O}_{B}\ (\text{resp. } \iota(A_a)\in \mathcal{O}^{\text{tru}}_{B})$. 
 Here $z_{a}:=\begin{cases}z-a&\text{if }a\in \mathbb{C}\\
	 1/z&\text{if }a=\infty
 \end{cases}$
 with the standard coordinate $z$ of $\mathbb{C}$.
Let $S=k_{0}a_{0}+\ldots+k_{p}a_{p}$ be an effective  divisor on $\mathbb{P}^{1}$
as before.
Define a set of meromorphic connections on $\mathbb{P}^{1}$ 
\[
	\mathrm{Triv}^{(n)}_{S}:=\left\{
		(\mathcal{O}^{n},\nabla) \middle| 
		\begin{array}{c}
			\nabla\colon \mathcal{O}^{n}\rightarrow 
			\mathcal{O}^{n}\otimes \Omega_{S}
			\end{array}
	\right\}.
\]
We say $(\mathcal{O}^{n},\nabla)\in \mathrm{Triv}^{(n)}_{S}$ 
is {\em stable} if there exists no nontrivial proper
subspace $W\subset \mathbb{C}^{n}$ 
such that the subbundle $\mathcal{W}:=W\otimes \mathcal{O}\subset \mathbb{C}^{n}\otimes 
\mathcal{O}=\mathcal{O}^{n}$ is closed under $\nabla$, i.e.,
\[
	\nabla(\mathcal{W})\subset \mathcal{W}\otimes \Omega_{S}.
\]

Let $\mathbf{B}=(B_{0},\ldots,B_{p})\in M(n,\mathbb{C}(\!(z)\!))^{p+1}$ 
be a collection of 
HTL normal forms satisfying $\mathrm{ord}(B_{i})=k_{i}$ for all $i=0,\ldots,p$.
Then the moduli space of stable meromorphic connections on trivial bundles
is 
\[
	\mathfrak{M}(\mathbf{B}):=
	\left\{
		(\mathcal{O}^{n},\nabla)\in 
		\mathrm{Triv}^{(n)}_{S}\,\middle|\,
		\begin{array}{c}
			(\mathcal{O}^{n},\nabla)\colon\text{ stable},\\
		\nabla_{a_{i}}\in \mathcal{O}^{\text{tru}}_{B_{i}}
		\text{ for all }i=0,\ldots,p
		\end{array}
	\right\}\big/\mathrm{GL}(n,\mathbb{C}).
\]
Here $\mathrm{GL}(n,\mathbb{C})=\mathrm{GL}(n,\mathcal{O}(\mathbb{P}^{1}))$
acts on $\text{Triv}^{(n)}_{S}$ as holomorphic gauge transformations.

A M\"obius transformation allows us to set $a_{0}=\infty\in \mathbb{P}^{1}$.
Then we can 
identify $(\mathcal{O}^{n},\nabla)\in \mathrm{Triv}^{n}_{S}$ with 
a meromorphic differential equation defined on $\mathbb{P}^{1}$,
\[
	\frac{d}{dz}Y=\left(\sum_{i=1}^{p}\sum_{\nu=1}^{k_{i}}
		\frac{A^{(i)}_{\nu}}{(z-a_{i})^{\nu}}
	-\sum_{2\le\nu\le k_{0}}A^{(0)}_{\nu}z^{\nu-2}
\right)Y.
\]
The stability of $(\mathcal{O}^{n},\nabla)$ corresponds to the irreducibility
of the differential equation, namely, we say the above differential equation 
is {\em irreducible} if 
there is no proper subspace of $\mathbb{C}^{n}$ other than $\{0\}$ 
which is invariant under all $A^{(i)}_{\nu}$, $i=0,\ldots,p$, $
\nu=1,\ldots,k_{i}$.
Here we set 
\[
	A_{1}^{(0)}:=-\sum_{i=1}^{p}A^{(i)}_{1}.
\]
Thus we can regard $\mathfrak{M}(\mathbf{B})$ as a moduli space of 
meromorphic differential equations on $\mathbb{P}^{1}$,
\begin{multline*}
	\mathfrak{M}(\mathbf{B})=\\
	\left\{\frac{d}{dz}Y=\left(\sum_{i=1}^{p}\sum_{\nu=1}^{k_{i}}
		\frac{A^{(i)}_{\nu}}{(z-a_{i})^{\nu}}
	-\sum_{2\le\nu\le k_{0}}A^{(0)}_{\nu}z^{\nu-2}
\right)Y\,\middle|\,\begin{array}{c}\text{irreducible},\\
 \sum_{\nu=1}^{k_{i}}\frac{A^{(i)}_{\nu}}{z^{\nu}}\in 
\mathcal{O}^{\text{tru}}_{B_{i}},\\ i=0,\ldots,p
\end{array}
	\right\}\\\bigg/ \mathrm{GL}(n,\mathbb{C}).
\end{multline*}

If we take another effective divisor $S'=k_{0}b_{0}+\cdots+k_{p}b_{p}$
with the same $k_{i}$ as $S$,
then we can identify $\mathrm{Triv}_{S}^{(n)}$
and $\mathrm{Triv}_{S'}^{(n)}$ as follows. We may assume $b_{0}=\infty$
by applying a M\"obius transformation if necessary. Then replacing $a_{i}$ with $b_{i}$ 
of an element in $\mathrm{Triv}^{(n)}_{S}$ for $i=1,\ldots,p$, we obtain an element in $\mathrm{Triv}_{S'}^{(n)}$.

Thus we may regard
\[
	\mathfrak{M}(\mathbf{B})=
	\left\{
		\mathbf{A}=(A^{(i)}(z))_{0\le i\le p}
		\in \prod_{i=0}^{p}\mathcal{O}_{B^{(i)}}^{\text{tru}} 
		\middle|\ 
\begin{array}{c}\mathbf{A}\text{ is irreducible }, \\
	\sum_{i=0}^{p}\mathrm{pr}_{\text{res}}(A^{(i)}(z))=0
\end{array}
\right\}\bigg/ \mathrm{GL}(n,\mathbb{C})
\]
which is free from locations of $a_{i}$ in $\mathbb{P}^{1}$.

\subsection{Moduli spaces of connections and
quiver varieties}
We shall give a realization of the moduli space 
$\mathfrak{M}(\mathbf{B})$ as a quiver variety.
Let us suppose that $B^{(0)},\ldots,B^{(p)}$ are written by
\[
    B^{(i)}=
    \mathrm{diag}\left(
    q^{(i)}_{1}(z^{-1})I_{n^{(i)}_{1}}+R^{(i)}_{1}z^{-1},\ldots,
    q^{(i)}_{m^{(i)}}(z^{-1})I_{n^{(i)}_{m^{(i)}}}+R^{(i)}_{m^{(i)}}z^{-1}
    \right)
\]
and choose complex numbers 
$\xi^{[i,j]}_{1},\ldots,\xi^{[i,j]}_{e_{[i,j]}}$
so that 
\[
    \prod_{k=1}^{e_{[i,j]}}(R^{(i)}_{j}-\xi^{[i,j]}_{k})=0
\]
for $i=0,\ldots,p$ and $j=1,\ldots,m^{(i)}$.
Set 
\[
	k_{i}:=-\mathrm{max}_{j=1,\ldots,m^{(i)}}\{\mathrm{ord}(q_{j}^{(i)}
	(z^{-1}))\}
\]
for each $i=0,\ldots,p$.
Set 
\[
	I_{\text{irr}}:=\{i\in\{0,\ldots,p\}\mid m^{(i)}>1\}\cup \{0\}
\]
and 
\[
	I_{\text{reg}}:=\{0,\ldots,p\}\backslash I_{\text{irr}}.
\]
Here 
$I_{\text{irr}}$ may be seen as the set of irregular singular points and $\infty$,
and $I_{\text{reg}}$ of  
regular singular points other than $\infty$.

Then let us define a quiver $\mathsf{Q}$ as follows. Set
\begin{align*}
	\mathsf{Q}_{0}^{\text{irr}}&:=\left\{[i,j]\,\middle|\,
		\begin{array}{l}
		i\in I_{\text{irr}},\\
		j=1,\ldots,m^{(i)}
\end{array}\right\},&
	\mathsf{Q}_{0}^{\text{leg}}&:=\left\{
			[i,j,k]\,\middle|\,
			\begin{array}{l}
			i=0,\ldots,p,\\
			j=1,\ldots,m^{(i)},\\
			k=1,\ldots,e_{[i,j]}-1
			\end{array}
		\right\}.\\
\end{align*}
Then the set of vertices of $\mathsf{Q}$ is the disjoint union
\[
	\mathsf{Q}_{0}:=\mathsf{Q}_{0}^{\text{irr}}\sqcup
	\mathsf{Q}_{0}^{\text{leg}}.
\]
Also set
\begin{align*}
	\mathsf{Q}_{1}^{0\to I_{\text{irr}}}&:=\left\{
		\rho^{[0,j]}_{[i,j']}\colon
		[0,j]\rightarrow [i,j']\,\middle|\,
		\begin{array}{l}
			j=1,\ldots,m^{(0)},\\
			i\in I_{\text{irr}}\backslash\{0\},\\
			j'=1,\ldots,m^{(i)}
		\end{array}
	\right\},\\
	\mathsf{Q}_{1}^{B^{(i)}}&:=\left\{
		\rho^{[k]}_{[i,j],[i,j']}\colon
		[i,j]\rightarrow [i,j']\,\middle|\,
		\begin{array}{l}
			1\le j<j'\le m^{(i)},\\
			1\le k\le d_{i}(j,j')
		\end{array}
	\right\},\\
	\mathsf{Q}_{1}^{\text{leg}^{(i)}}&:=
	\left\{
		\rho_{[i,j,k]}\colon [i,j,k]\rightarrow
		[i,j,k-1]\,\middle|\,
		\begin{array}{l}
			j=1,\ldots,m^{(i)},\\
			k=2,\ldots,e_{[i,j]}-1
		\end{array}
	\right\},\\
	\mathsf{Q}_{1}^{\text{leg}^{(i)}\to B^{(i)}}&:=\left\{
		\rho_{[i,j,1]}\colon [i,j,1]\rightarrow [i,j]\mid
		j=1,\ldots,m^{(i)}
	\right\},\\
	\mathsf{Q}^{\text{leg}^{(i)}\to 0}_{1}&:=\left\{
		\rho^{[i,1,1]}_{[0,j]}\colon
		[i,1,1]\rightarrow [0,j]\mid 
		i\in I_{\text{reg}},\,j=1,\ldots,m^{(0)}
	\right\}.
\end{align*}
Here $d_{i}(j,j'):=\mathrm{deg\,}_{\mathbb{C}[z]}(q^{(i)}_{j}(z)
-q^{(i)}_{j'}(z))-2$.

Then the set of arrows of $\mathsf{Q}$ is the disjoint union
\[
	\mathsf{Q}_{1}:=
	\mathsf{Q}_{1}^{0\to I_{\text{irr}}}
	\sqcup
	\bigsqcup_{i\in I_{\text{irr}}}\left(
		\mathsf{Q}_{1}^{B^{(i)}}\sqcup
		\mathsf{Q}_{1}^{\text{leg}^{(i)}\to B^{(i)}}\sqcup
		\mathsf{Q}_{1}^{\text{leg}^{(i)}}
	\right)
	\sqcup
	\bigsqcup_{i\in I_{\text{reg}}}\left(
		\mathsf{Q}_{1}^{\text{leg}^{(i)}\to 0}\sqcup
		\mathsf{Q}_{1}^{\text{leg}^{(i)}}
	\right).
\]

\begin{exa}\normalfont
	Let us consider the following $\mathbf{B}=(B^{(0)},B^{(1)},B^{(2)})$.
	\begin{align*}
		B^{(0)}&=
		\begin{pmatrix}
			a^{(0)}_{4}&&&\\
							       &a^{(0)}_{4}&&\\
							       &&a^{(0)}_{4}&\\
							&&&b^{(0)}_{4}
		\end{pmatrix}z^{-4}
		+
		\begin{pmatrix}
			a^{(0)}_{3}&&&\\
						 &a^{(0)}_{3}&&\\
			    &&b^{(0)}_{3}&\\
			    &&&c^{(0)}_{3}
		\end{pmatrix}z^{-3}
		\\
		&+
		\begin{pmatrix}
			a^{(0)}_{2}&&&\\
							       &b^{(0)}_{2}&&\\
							       &&c^{(0)}_{2}&\\
							       &&&d^{(0)}_{2}
		\end{pmatrix}z^{-2}
		+
		\begin{pmatrix}
			\xi^{[0,1]}_{1}&&&\\
			&\xi^{[0,2]}_{1}&&\\
		 &&\xi^{[0,3]}_{1}&\\
			&&&\xi^{[0,4]}_{1}
		\end{pmatrix}z^{-1},\\
		B^{(1)}&=
		\begin{pmatrix}
			a^{(1)}_{2}&&&\\
							       &a^{(1)}_{2}&&\\
							       &&a^{(1)}_{2}&\\
							       &&&b^{(1)}_{2}
		\end{pmatrix}
		z^{-2}+
		\begin{pmatrix}
			\xi^{[1,1]}_{1}&&&\\
				       &\xi^{[1,1]}_{2}&&\\
				       &&\xi^{[1,1]}_{3}&\\
				       &&&\xi^{[1,2]}_{1}
		\end{pmatrix}z^{-1},\\
		B^{(2)}&=
		\begin{pmatrix}
			\xi^{[2,1]}_{1}&&&\\
				       &\xi^{[2,1]}_{2}&&\\
				       &&\xi^{[2,1]}_{3}&\\
				       &&&\xi^{[2,1]}_{4}
		\end{pmatrix}z^{-1}.
	\end{align*}
	Here any distinct 
	two of $\{a^{(i)}_{j},b^{(i)}_{j},c^{(i)}_{j},d^{(i)}_{j}\}$
	stand for distinct complex numbers and
	$\xi^{[i,j]}_{k}\neq \xi^{[i,j]}_{k'}$ if $k\neq k'$. 

	Then we can associate the following quiver to this $\mathbf{B}$.
	\[
	\begin{xy}
		(0,12) *++!U{[0,3]}*\cir<5pt>{}="A",
		(0,0) *++!U{[0,4]}*\cir<5pt>{}="B",
		(0,36)*++!D{[0,1]}*\cir<5pt>{}="C",
		(0,24)*++!D{[0,2]}*\cir<5pt>{}="D",
		(-30,28)*+!R+!D{[1,1]}*\cir<5pt>{}="E",
		(-30,40)*++!D{[1,2]}*\cir<5pt>{}="F",
		(-42,28)*++!U{[1,1,1]}*\cir<5pt>{}="G",
		(-54,28)*++!U{[1,1,2]}*\cir<5pt>{}="H",
		(-30,-4)*++!U{[2,1,1]}*\cir<5pt>{}="I",
		(-42,-4)*++!U{[2,1,2]}*\cir<5pt>{}="J",
		(-54,-4)*++!U{[2,1,3]}*\cir<5pt>{}="K",
		\ar@/^50pt/@{=>}"D";"B",	
		\ar@/^70pt/@{=>}"C";"B",
		\ar@/^30pt/@{=>}"A";"B",
		\ar@/^20pt/@{->}"C";"A",
		\ar@/^10pt/@{->}"D";"A",
		\ar@{->}"H";"G",
		\ar@{->}"G";"E",
		\ar@{->}"K";"J",
		\ar@{->}"J";"I",
		\ar@{->}"I";"A",
		\ar@{->}"I";"B",
		\ar@{->}"I";"C",
		\ar@{->}"I";"D",
		\ar@{<-}"E";"A",
		\ar@{<-}"E";"B",
		\ar@{<-}"E";"C",
		\ar@{<-}"E";"D",
		\ar@{<-}"F";"A",
		\ar@{<-}"F";"B",
		\ar@{<-}"F";"C",
		\ar@{<-}"F";"D",
		\end{xy}
	\]
\end{exa}
Let $\alpha=(\alpha_{a})_{a\in \mathsf{Q}_{0}}
\in \mathbb{Z}^{\mathsf{Q}_{0}}$ be the vector, 
\[
	\alpha_{[i,j]}:=n^{(i)}_{j}\quad\text{ and }\quad
\alpha_{[i,j,k]}:=\mathrm{rank\,}
\prod_{l=1}^{k}(R^{(i)}_{j}-\xi_{l}^{[i,j]}).
\]
Also define $\lambda=(\lambda_{a})_{a\in \mathsf{Q}_{0}}\in 
\mathbb{C}^{\mathsf{Q}_{0}}$ by 
\begin{align*}
	\lambda_{[i,j]}&:=-\xi^{[i,j]}_1&&\text{ for }
	i\in I_{\text{irr}}\backslash\{0\},\,
	j=1,\ldots,m^{(i)}, \\
	\lambda_{[0,j]}&:=-\xi^{[0,j]}
	-\sum_{i\in I_{\text{reg}}}\xi^{[i,1]}_{1}&&\text{ for }
	j=1,\ldots,m^{(0)},\\
	\lambda_{[i,j,k]}&:=\xi^{[i,j]}_{k}-\xi^{[i,j]}_{k+1}&&\text{ for }
	\begin{array}{l}i=0,\ldots,p,\, j=1,\ldots,m^{(i)},\\
		k=1,\ldots,e_{[i,j]}-1.
	\end{array}
\end{align*}
Also define a sublattice of $\mathbb{Z}^{\mathsf{Q}_{0}}$, 
\[
	\mathcal{L}=\left\{\beta\in 
\mathbb{Z}^{\mathsf{Q}_{0}}\,\middle|\,
	\sum_{j=1}^{m^{(0)}}\beta_{[0,j]}=\sum_{j=1}^{m^{(i)}}\beta_{[i,j]}
\text{ for all }i\in I_{\text{irr}}\backslash\{0\}
\right\}.
\]
Set $\mathcal{L}^{+}=\mathcal{L}\cap (\mathbb{Z}_{\ge 0})^{\mathsf{Q}_{0}}$.
\subsubsection{$\mathfrak{M}(\mathbf{B})$
and  a quiver variety}
Now we shall give an identification
of $\mathfrak{M}(\mathbf{B})$ with 
a subspace of the quiver variety
$\mathfrak{M}_{\lambda}(\mathsf{Q},\alpha)$. Before seeing this, we introduce 
$\mathcal{L}$-irreducible representations in 
$\mu^{-1}(\lambda)$ which are 
defined by a weaker condition than 
the irreducibility.

\begin{df}[$\mathcal{L}$-irreducible]
    \normalfont
    If $x\in \mu^{-1}(\lambda)$ has 
    no nontrivial proper subrepresentation
    $\{0\}\neq y\subsetneqq x$ in 
    $\mu^{-1}(\lambda)$
    with 
    $\mathbf{dim\,}y\in \mathcal{L}$,
    then $x$ is said to be {\em $\mathcal{L}$-irreducible}.
\end{df}
Then we have the following bijection 
from $\mathfrak{M}(\mathbf{B})$ onto 
a subset of the quiver variety 
$\mathfrak{M}_{\lambda}(\mathsf{Q},\alpha)$.
\begin{thm}[Theorem 5.14 in \cite{H2}]\label{quiverreal}
	There exists a bijection
	\[
		\Phi_{\mathbf{B}}
		\colon \mathfrak{M}(
		\mathbf{B})\longrightarrow
	\mathfrak{M}_{\lambda}(\mathsf{Q},\alpha)^{\text{dif}}\]
	where
	\[
		\mathfrak{M}_{\lambda}(\mathsf{Q},\alpha)^{\text{dif}}:=
		\left\{
			x\in\mu^{-1}(\lambda)\ \middle|\  \begin{array}{c}
				x\text{ is $\mathcal{L}$-irreducible},\\
			\mathrm{det}
			\left(
			x_{\rho^{[0,j]}_{[i,j']}}\right)_{\substack{1\le j\le m^{(0)}\\1\le j'\le m^{(i)}}}\neq 0,
			i\in 
			I_{\text{irr}}
			\backslash\{0\}
		\end{array}
		\right\}
		\bigg/\mathbf{G}.
	\]
\end{thm}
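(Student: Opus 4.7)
The plan is to construct $\Phi_{\mathbf{B}}$ explicitly by turning a connection $(\mathcal{E},\nabla)\sim \mathbf{A}=(A^{(i)}(z))_{0\le i\le p}$ into a representation of $\overline{\mathsf{Q}}$, and then to produce an inverse from the $\mathcal{L}$-irreducible and determinant-nondegenerate locus. Fix a trivialization that realizes $B^{(0)}$ in block form at $\infty$, so that $\mathbb{C}^{n}=\bigoplus_{j=1}^{m^{(0)}}V_{[0,j]}$ with $\dim V_{[0,j]}=n^{(0)}_{j}$. For each $i\in I_{\text{irr}}\setminus\{0\}$, a local frame bringing $A^{(i)}(z)$ into a representative of $\mathcal{O}_{B^{(i)}}^{\text{tru}}$ yields an alternative decomposition $\mathbb{C}^{n}=\bigoplus_{j'=1}^{m^{(i)}}V_{[i,j']}$; the change of frame between the $\infty$-block decomposition and the $a_{i}$-block decomposition is an element of $\mathrm{GL}(n,\mathbb{C})$ whose block components are read off as the arrows $x_{\rho^{[0,j]}_{[i,j']}}\colon V_{[0,j]}\to V_{[i,j']}$. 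Jointly, these form an invertible matrix, producing the determinant condition. The higher-order coefficients of $B^{(i)}$ in this frame, indexed by the degrees $d_{i}(j,j')=\deg(q^{(i)}_{j}-q^{(i)}_{j'})-2$, supply the arrows in $\mathsf{Q}_{1}^{B^{(i)}}$ and their duals.

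Next, for every residue block $R^{(i)}_{j}$ the identity $\prod_{k=1}^{e_{[i,j]}}(R^{(i)}_{j}-\xi^{[i,j]}_{k})=0$ lets us factorize $R^{(i)}_{j}-\xi^{[i,j]}_{1}I$ through a space of rank $\alpha_{[i,j,1]}$ to obtain the pair $(x_{\rho_{[i,j,1]}},x_{\rho_{[i,j,1]}^{\ast}})$; iterating on the successive products builds the whole leg and produces the arrows in $\mathsf{Q}_{1}^{\text{leg}^{(i)}}$. For $i\in I_{\text{reg}}$, there is no block structure to accommodate and the entire contribution of the residue is encoded through the leg at $[i,1]$, which terminates at the central vertices $[0,j]$ via the arrows $\rho^{[i,1,1]}_{[0,j]}$. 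A direct computation shows that the moment map relation $\mu_{\alpha}(x)=\lambda$ translates into: at the central vertices $[0,j]$, the residue-sum identity $\sum_{i=0}^{p}\mathrm{pr}_{\text{res}}(A^{(i)}(z))=0$; at the vertices $[i,j]$ with $i\in I_{\text{irr}}\setminus\{0\}$, the condition $R^{(i)}_{j}-\xi^{[i,j]}_{1}I$ equals the prescribed product of arrows; and along the legs, the telescoping relations that express the minimal-polynomial factorization of $R^{(i)}_{j}$.

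The remaining checks are that $\Phi_{\mathbf{B}}$ is well defined modulo gauge, and that it is a bijection onto $\mathfrak{M}_{\lambda}(\mathsf{Q},\alpha)^{\text{dif}}$. The ambiguity in choosing the local frames above is precisely an element of $\mathbf{G}=\prod_{a}\mathrm{GL}(\alpha_{a},\mathbb{C})$; the outer $\mathrm{GL}(n,\mathbb{C})$-action on $\mathbf{A}$ corresponds to the diagonal rescaling at the central vertices together with the induced basis change on the legs. For the inverse, given $x\in \mu^{-1}(\lambda)$ satisfying the determinant condition, the invertibility of the block matrix $(x_{\rho^{[0,j]}_{[i,j']}})$ identifies the two frames $\bigoplus_{j}V_{[0,j]}\cong \bigoplus_{j'}V_{[i,j']}$, the higher arrows reassemble the principal part at $a_{i}$, and the leg data together with the moment-map equation reconstruct the residue matrices with the prescribed eigenvalue structure, yielding $A^{(i)}(z)\in \mathcal{O}_{B^{(i)}}^{\text{tru}}$.

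The principal obstacle is identifying irreducibility of the differential system with $\mathcal{L}$-irreducibility of the corresponding representation. A subbundle $\mathcal{W}\subset \mathcal{E}$ invariant under $\nabla$ must be compatible with all block decompositions at all irregular singular points simultaneously; the lattice condition $\sum_{j}\beta_{[0,j]}=\sum_{j}\beta_{[i,j]}$ for each $i\in I_{\text{irr}}\setminus\{0\}$ is exactly what ensures that a subrepresentation of $x$ encodes a common vector subspace of $\mathbb{C}^{n}$ rather than a collection of unrelated subspaces in the various frames. The technical content is to verify, using the determinant condition, that $\mathcal{L}$-subrepresentations and $\nabla$-invariant subbundles correspond bijectively, and to check along the legs that the Kernel--image filtration induced by $(x_{\rho_{[i,j,k]}},x_{\rho_{[i,j,k]}^{\ast}})$ matches the generalized eigenspace filtration of $R^{(i)}_{j}$ so that the correspondence descends to the quotients by $\mathrm{GL}(n,\mathbb{C})$ and $\mathbf{G}$ respectively.
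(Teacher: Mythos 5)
Your overall architecture matches the construction the paper recalls (frames at each singular point, the change-of-frame blocks giving the arrows $\rho^{[0,j]}_{[i,j']}$ and the determinant condition, legs built from the factorization $\prod_k(R^{(i)}_j-\xi^{[i,j]}_k)=0$), but there is a concrete gap at the heart of the theorem: your account of where the moduli at an irregular singular point live, and hence of why the map is a bijection there, is wrong. You write that ``the higher-order coefficients of $B^{(i)}$ in this frame \ldots supply the arrows in $\mathsf{Q}_1^{B^{(i)}}$,'' but $B^{(i)}$ is a fixed HTL normal form whose coefficients of $z^{-s}$ for $s\ge 2$ are scalars on each block; they are part of the prescribed data $\mathbf{B}$ and carry no moduli at all. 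The actual degrees of freedom of an element $A\in(\mathcal{O}^{\text{tru}}_{B^{(i)}})^{o}$ beyond its residue sit in the conjugating element $g\in G^{o}_{k_i}$ with $g^{-1}Ag=B^{(i)}$, and the paper extracts them via the unipotent--parabolic factorization $g=u_-p_+$ (Lemma 3.5 in \cite{HY}), setting $Q=u_--I_n$ and $P=u_-^{-1}A|_{(\mathfrak{U}^-_{k_i})^*}$; the arrows $x_{\rho^{[k]}_{[i,j],[i,j']}}$ and their duals are the graded components of this pair $(P,Q)$, and the count $d_i(j,j')=\deg(q^{(i)}_j-q^{(i)}_{j'})-2$ comes from how many levels $s$ the pair $([i,j],[i,j'])$ survives in the simultaneous eigenspace filtration. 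Crucially, the bijectivity of $\Phi_{\mathbf{B}}$ at each irregular point rests on the nontrivial statement (Theorem 3.6 in \cite{HY}) that $(P,Q)$ uniquely determine $A-\mathrm{pr}_{\text{res}}(A)$ and conversely; your proposal contains no mechanism playing this role, so neither injectivity nor surjectivity onto the quiver-variety side is established.

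Two further points are asserted rather than proved. The translation of the moment map equation $\mu_\alpha(x)=\lambda$ into the residue-sum condition and the minimal-polynomial relations along the legs is claimed as ``a direct computation'' but never performed, and it is exactly here that the specific choice of $\lambda$ (in particular the shifts $\lambda_{[0,j]}=-\xi^{[0,j]}-\sum_{i\in I_{\text{reg}}}\xi^{[i,1]}_1$) must be verified. And the equivalence between stability of $(\mathcal{E},\nabla)$ and $\mathcal{L}$-irreducibility of $x$, which you correctly flag as the principal obstacle, is left as ``technical content to verify''; since this equivalence is precisely what distinguishes $\mathfrak{M}_\lambda(\mathsf{Q},\alpha)^{\text{dif}}$ from the full fixed-point set of the determinant condition, leaving it open means the target of the bijection has not actually been identified.
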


\begin{rem}\label{invariance}\normalfont
Let us see that invariance of $\mathfrak{M}(\mathbf{B})$ under some changes of 
irregular parts of HTL normal forms in $\mathbf{B}$.
Let $\mathbf{B}$ be the collection of HTL normal forms as above.	
Let us consider another collection of HTL normal forms 
$\bar{\mathbf{B}}=(\bar{B}^{(0)},\ldots,\bar{B}^{(p)})$ of the forms
\[
	\bar{B}^{(i)}=\mathrm{diag}\left(
	\bar{q}^{(i)}_{1}(z^{-1})I_{n_{1}^{(i)}}+\bar{R}_{1}^{(i)}z^{-1},
	\ldots,
	\bar{q}^{(i)}_{m^{(i)}}(z^{-1})I_{n^{(i)}_{m^{(i)}}}+\bar{R}_{m^{(i)}}^{(i)}z^{-1}
	\right).
\]
Here we note that  
HTL normal forms $B^{(i)}$ and $\bar{B}^{(i)}$ share the same $n^{(i)}_{j}$,
the sizes of diagonal blocks for $i=0,\ldots,p, j=1,\ldots,m^{(i)}$, 
Define $\bar{d}_{i}(j,j'):=\mathrm{deg}_{\mathbb{C}[z]}(\bar{q}^{(i)}_{j}(z)-
\bar{q}^{(i)}_{j'}(z^{-1}))-2$.
Suppose that 
\begin{align*}
	\bar{d}_{i}(j,j')&=d_{i}(j,j'),&
	C_{\bar{R}^{(i)}_{j}}=C_{R^{(i)}_{j}}
\end{align*}
for all $i=0,\ldots,p$ and $j,j'=1,\ldots,m^{(i)}$.
Here $C_{A}$ is the conjugacy class of $A\in M(n,\mathbb{C})$ under the 
adjoint action of $\mathrm{GL}(n,\mathbb{C})$.
Then from the above construction of the  quiver and Theorem \ref{quiverreal}, we have
the bijection
\[
	\Phi_{\bar{\mathbf{B}}}\colon \mathfrak{M}(\bar{\mathbf{B}})\longrightarrow
	\mathfrak{M}_{\lambda}(\mathsf{Q},\alpha)^{\text{dif}}
\]
with the same $\mathsf{Q}$, $\lambda$, $\alpha$ as for $\mathbf{B}$.
Thus we have an isomorphism 
\[
	\Phi_{\bar{\mathbf{B}}}^{-1}\circ\Phi_{\mathbf{B}}\colon
	\mathfrak{M}(\mathbf{B})\xrightarrow[]{\sim}
	\mathfrak{M}(\bar{\mathbf{B}}).
\]

\end{rem}

Now let us recall the construction of the above $\Phi_{\mathbf{B}}$
which 
is first obtained by Crawley-Boevey in \cite{C} when $k_{0}=\cdots =k_{p}=1$,
by Boalch in \cite{Boarx} when $k_{0}=3$ and $k_{1}=\cdots=k_{p}=1$, 
by Yamakawa and the author in \cite{HY} 
when $k_{0}\ge 1$ and $k_{1}=\cdots=k_{p}=1$ 
and finally by the author in \cite{H2} for general $k_{0},\ldots,k_{p}\in 
(\mathbb{Z}_{>0})^{p+1}$.

First we decompose truncated orbits $\mathcal{O}_{B^{(i)}}^{\text{tru}}$ as follows.
Set 
\begin{align*}
	G^{o}_{k_{i}}&:=
	\left\{I_{n}+\sum_{i=1}^{k_{i}-1}g_{i}z^{i}\in 
	\mathrm{GL}(n,\mathbb{C}[\![z]\!]/z^{k_{i}}
\mathbb{C}[\![z]\!])\right\},\\
		(\mathcal{O}^{\text{tru}}_{B^{(i)}})^{o}&:=
		\left\{g^{-1}Bg\,\middle|\,
		g\in G_{k_{i}}^{o}
		\right\}
	\subset M(n,\mathbb{C}(\!(z)\!)/
		\mathbb{C}[\![z]\!]),\\
		H_{i}&:=
		\left\{
			\mathrm{diag}(h_{1},
			\ldots,h_{m^{(i)}})\,\middle|\, 
			h_{j}\in \mathrm{GL}(n_{j}^{(i)}),\,
			j=1,\ldots,m^{(i)}
		\right\}.
\end{align*}
for $i=0,\ldots,p$. Here we note that $G^{o}_{k_{i}}$ acts on $M(n,\mathbb{C}(\!(z)\!)/
\mathbb{C}[\![z]\!])$ as well as $\mathrm{GL}(n,\mathbb{C}[\![z]\!])$.
Then we can decompose $\mathcal{O}_{B^{(i)}}^{\text{tru}}$ 
as follows.
\begin{prop}[see Lemma 2.4 in \cite{Boa1} and Proposition 4.9 in \cite{H2}]\label{decoupling}
	For each $i=0,\ldots,p$, we have the bijection 
	\[
		\begin{array}{ccc}
			\mathrm{GL}(n,\mathbb{C})\times_{H^{(i)}}
			\mathrm{Ad}_{H_{i}}
			(\mathcal{O}_{B^{(i)}}^{\text{tru}})^{o}&
			\longrightarrow &
			\mathcal{O}^{\text{tru}}_{B^{(i)}}\\
			(g,A)&\longmapsto& gAg^{-1}.
		\end{array}
	\]
	Here $\mathrm{Ad}_{H_{i}}
			(\mathcal{O}_{B^{(i)}}^{\text{tru}})^{o}:=
			\left\{h(\mathcal{O}_{B^{(i)}}^{\text{tru}})^{o}h^{-1}
			\,\middle| \, h\in H_{i}\right\}$ and 
			$\mathrm{GL}(n,\mathbb{C})\times_{H^{(i)}}
			(\mathcal{O}_{B^{(i)}}^{\text{tru}})^{o}
			:=\left(\mathrm{GL}(n,\mathbb{C})\times
			(\mathcal{O}_{B^{(i)}}^{\text{tru}})^{o}
		\right)/\sim$
		by the identification 
		$(g,A)\sim (gh^{-1},hAh^{-1})$ for $h\in H^{(i)}$.

\end{prop}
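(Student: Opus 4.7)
The map in the proposition is the natural action map from a $\mathrm{GL}(n,\mathbb{C})$-equivariant homogeneous object, and the claim is that it descends to a bijection on the fiber quotient. I would proceed by setting up a clean group-theoretic framework. Let $\mathbf{G}_1 := \mathrm{GL}(n,\mathbb{C}[\![z]\!]/z^{k_i}\mathbb{C}[\![z]\!])$. Reduction modulo $z$ produces a split short exact sequence
\[
1 \longrightarrow G^o_{k_i} \longrightarrow \mathbf{G}_1 \longrightarrow \mathrm{GL}(n,\mathbb{C}) \longrightarrow 1,
\]
the splitting being the inclusion of constants. Hence every $X \in \mathbf{G}_1$ has the unique factorization $X = g\cdot u$ with $g := X|_{z=0} \in \mathrm{GL}(n,\mathbb{C})$ and $u := g^{-1}X \in G^o_{k_i}$. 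A direct check ($h(I + v_1 z + \cdots)h^{-1} = I + (hv_1h^{-1})z + \cdots$) shows $H_i$ normalizes $G^o_{k_i}$, so $\tilde{\mathbf{H}}_i := H_i \cdot G^o_{k_i} \subset \mathbf{G}_1$ is a subgroup, with $\mathrm{GL}(n,\mathbb{C}) \cap \tilde{\mathbf{H}}_i = H_i$ by uniqueness of the factorization. In this language $\mathcal{O}^{\mathrm{tru}}_{B^{(i)}}$ is the $\mathbf{G}_1$-orbit of $\tilde{B}^{(i)}$ under adjoint action, while $\Ad_{H_i}(\mathcal{O}^{\mathrm{tru}}_{B^{(i)}})^o$ is the $\tilde{\mathbf{H}}_i$-orbit of $\tilde{B}^{(i)}$.

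Well-definedness on the fiber product is the identity $(gh^{-1})(hAh^{-1})(gh^{-1})^{-1} = gAg^{-1}$ for $h \in H_i$. Surjectivity is then immediate: given $C = X[\tilde{B}^{(i)}] \in \mathcal{O}^{\mathrm{tru}}_{B^{(i)}}$ with $X \in \mathrm{GL}(n,\mathbb{C}[\![z]\!])$, factor $X \equiv g \cdot u$ modulo $z^{k_i}$ to obtain $C = g\bigl(u\tilde{B}^{(i)} u^{-1}\bigr)g^{-1}$ with $u\tilde{B}^{(i)}u^{-1} \in (\mathcal{O}^{\mathrm{tru}}_{B^{(i)}})^o \subset \Ad_{H_i}(\mathcal{O}^{\mathrm{tru}}_{B^{(i)}})^o$.

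For injectivity, suppose $g_1 A_1 g_1^{-1} = g_2 A_2 g_2^{-1}$ and write $A_j = h_j u_j \tilde{B}^{(i)} u_j^{-1} h_j^{-1}$ with $h_j \in H_i$, $u_j \in G^o_{k_i}$. Then $Z := (g_2 h_2 u_2)^{-1}(g_1 h_1 u_1) \in \mathrm{Stab}_{\mathbf{G}_1}(\tilde{B}^{(i)})$. The crux is the inclusion
\[
\mathrm{Stab}_{\mathbf{G}_1}(\tilde{B}^{(i)}) \subset \tilde{\mathbf{H}}_i,
\]
equivalent to $Z_0 := Z|_{z=0} \in H_i$. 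Expanding $[Z,\tilde{B}^{(i)}] \equiv 0$ modulo $\mathbb{C}[\![z]\!]$ and matching coefficients of $z^{-k_i}, z^{-k_i+1}, \ldots$ gives a cascade of linear equations in the block entries $(Z_l)_{j,j'}$. Since each diagonal block $B_\nu^{(j)}$ of $B_\nu$ with $\nu \geq 2$ is the scalar matrix $b_{j,\nu}\,I_{n^{(i)}_j}$, the equation at the largest $\nu$ for which the sequences $(b_{j,\nu})$ and $(b_{j',\nu})$ first differ reads $(b_{j',\nu}-b_{j,\nu})(Z_0)_{j,j'}=0$ and forces $(Z_0)_{j,j'} = 0$; by the HTL normal-form hypothesis $q^{(i)}_j \neq q^{(i)}_{j'}$, such a $\nu$ exists. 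Hence $Z_0 \in H_i$ and $Z \in \tilde{\mathbf{H}}_i$. Write $Z = h' u'$ with $h' \in H_i$, $u' \in G^o_{k_i}$. Using $H_i$-normality of $G^o_{k_i}$, we may rewrite $g_2 h_2 u_2 h' u' = g_2 (h_2 h') \cdot ((h')^{-1} u_2 h')u'$ and then invoke uniqueness of the $\mathrm{GL}(n,\mathbb{C}) \cdot G^o_{k_i}$ factorization to read off $g_1 = g_2 h^{-1}$ with $h := h_1 (h')^{-1} h_2^{-1} \in H_i$ and $u_1 = ((h')^{-1}u_2h')u'$. A short manipulation using $u'\tilde{B}^{(i)}u'^{-1} = (h')^{-1}\tilde{B}^{(i)}h'$ (the centralizer condition on $h'u'$) then yields $h A_2 h^{-1} = A_1$, which is precisely the equivalence in the fiber product.

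The main obstacle is the centralizer inclusion $\mathrm{Stab}_{\mathbf{G}_1}(\tilde{B}^{(i)}) \subset \tilde{\mathbf{H}}_i$; once it is in place, the rest is the standard homogeneous-space identity $L \times_M (K \cdot x) \xrightarrow{\sim} (LK) \cdot x$ valid when $L \cap K = M$. The proposition can thus be viewed as the unramified generalization of Boalch's Lemma 2.4 in \cite{Boa1}, with the distinctness of the polynomials $q^{(i)}_j$ supplying the same semisimplicity input that the distinct eigenvalues of a nondegenerate irregular type provide in the one-pole case.
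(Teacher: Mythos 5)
Your argument is correct, and it is essentially the standard proof behind this statement: the paper itself gives no proof here, deferring entirely to Lemma 2.4 of \cite{Boa1} and Proposition 3.1 of \cite{H}, whose arguments likewise rest on the unique factorization $\mathbf{G}_1=\mathrm{GL}(n,\mathbb{C})\ltimes G^{o}_{k_i}$ together with the inclusion of the stabilizer of $\tilde{B}^{(i)}$ in $H_i\cdot G^{o}_{k_i}$, forced by the block-scalar (and pairwise distinct) polar parts $q^{(i)}_j$. Your self-contained verification of that stabilizer inclusion and of the resulting injectivity is sound, so the proposal matches the intended proof.
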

Thus it suffices to investigate
the structure of $(\mathcal{O}_{B^{(i)}}^{\text{tru}})^{o}$.

Fix $i\in I_{\text{irr}}$ and write $B^{(i)}=B^{(i)}_{1}z^{-1}+\cdots +
B_{k_{i}}^{(i)}z^{-k_{i}}$.
Let $\bigoplus_{t=1}^{m^{(i)}_{s}}V^{(i)}_{\langle s,t \rangle}$
be the decomposition of $\mathbb{C}^{n}$ as simultaneous eigen-spaces
of $(B^{(i)}_{s+1},B^{(i)}_{s+2},\ldots,B^{(i)}_{k_{i}})$ for 
$s=1,\ldots,k_{i}-1$.

Define surjections $\pi_{i}\colon J_{s}^{(i)}:=
\{1,\ldots,m^{(i)}_{s}\}\rightarrow J_{s+1}^{(i)}:=
\{1,\ldots,m^{(i)}_{s+1}\}$ so that 
$V_{\langle s, t\rangle}\subset V_{\langle s+1,\pi_{s}(t)\rangle}$.
Fix a total ordering $\prec$ of 
$J_{1}=\{1\prec 2\prec \cdots \prec m^{(i)}\}$. Then inductively define 
total orderings on $J_{s}$, $s=2,\ldots,k_{i}-1$ so that
\[
	\text{if }t_{1}\prec t_{2},
	\text{ then }\pi_{s}(t_{1})\prec \pi_{s}(t_{2}),\quad 
	t_{1},t_{2}\in J_{s}.
\]
According to the  ordering on each $J^{(i)}_{s}$, $s=1,\ldots,k_{i}-1$, 
let us define parabolic subalgebras of $M(n,\mathbb{C})$ as below,
\begin{align*}
	(\mathfrak{p}_{s}^{(i)})^{+}
	&:=\bigoplus_{\substack{t_{1},t_{2}\in J_{s}^{(i)},\\
    t_{1}\succeq t_{2}}}
    \mathrm{Hom}_{\mathbb{C}}(V^{(i)}_{\langle s,t_{1}\rangle},
    V^{(i)}_{\langle s,t_{2}\rangle}),\\
    (\mathfrak{p}^{(i)}_{s})^{-}
    &:=\bigoplus_{\substack{t_{1},t_{2}\in J^{(i)}_{s},\\
    t_{1}\preceq t_{2}}}
    \mathrm{Hom}_{\mathbb{C}}(V^{(i)}_{\langle s,t_{1}\rangle},
    V^{(i)}_{\langle s,t_{2}\rangle}),
\end{align*}
and  similarly nilpotent subalgebras
\begin{align*}
	(\mathfrak{u}_{s}^{(i)})^{+}
	&:=\bigoplus_{\substack{t_{1},t_{2}\in J_{s}^{(i)},\\
    t_{1}\succ t_{2}}}
    \mathrm{Hom}_{\mathbb{C}}(V^{(i)}_{\langle s,t_{1}\rangle},
    V^{(i)}_{\langle s,t_{2}\rangle}),\\
    (\mathfrak{u}^{(i)}_{s})^{-}
    &:=\bigoplus_{\substack{t_{1},t_{2}\in J^{(i)}_{s},\\
    t_{1}\prec t_{2}}}
    \mathrm{Hom}_{\mathbb{C}}(V^{(i)}_{\langle s,t_{1}\rangle},
    V^{(i)}_{\langle s,t_{2}\rangle}),
\end{align*}
for $s=1,\ldots,k_{i}-1$.
Then let us define subsets of $G_{k_{i}}^{o}$,
\begin{align*}
	&\mathcal{P}^{\pm}_{k_{i}}:=\left\{
	\sum_{s=0}^{k_{i}-1}P_{s}z^{s}\in G^{o}_{k_{i}}\,\bigg|\, P_{s}\in 
	(\mathfrak{p}_{s+1}^{(i)})^{\pm}, s=0,\ldots,k_{i}-1
    \right\},\\
    &\mathcal{U}^{\pm}_{k_{i}}:=\left\{
    \sum_{s=0}^{k_{i}-1}U_{s}z^{s}\in G^{o}_{k_{i}}\,\bigg|\, U_{s}\in 
    (\mathfrak{u}_{s+1}^{(i)})^{\pm}, s=0,\ldots,k_{i}-1
    \right\}.
\end{align*}
Also define 
\begin{align*}
	\mathfrak{U}^{\pm}_{k_{i}}&:=
	\left\{
		\sum_{s=1}^{k_{i}-1}U_{s}z^{s}\,\bigg|\,
		U_{s}\in (\mathfrak{u}^{(i)}_{s+1})^{\pm},\,
		s=0,\ldots,k_{i}-1
	\right\},\\
	(\mathfrak{U}^{\mp}_{k_{i}})^{*}&:=
    \left\{
	    \sum_{s=1}^{k_{i}-1}U_{s}z^{-s-1}\,\bigg|\,
	    U_{s}\in (\mathfrak{u}^{(i)}_{s+1})^{\pm},\,
	    s=0,\ldots,k_{i}-1
    \right\}.
\end{align*}
Here we put $(\mathfrak{p}^{(i)}_{k_{i}})^{\pm}:=M(n,\mathbb{C})$ and 
$(\mathfrak{u}^{(i)}_{k_{i}})^{\pm}:=\{0\}$.

Then we have the following decomposition of $G_{k_{i}}^{o}$.
\begin{prop}[Lemma 3.5 in \cite{HY}]
	Take $i\in I_{\text{irr}}$.
	For any $g\in G^{o}_{k_{i}}$, there uniquely exist 
	$u_{-}\in \mathcal{U}_{k_{i}}^{-}$ and $p_{+}\in 
	\mathcal{P}^{+}_{k_{i}}$
	such that $g=u_{-}p_{+}$.
\end{prop}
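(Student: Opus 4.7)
My plan is to proceed by induction on $s$, matching the coefficient of $z^s$ in the equation $g = u_- p_+$ for $s = 1, 2, \ldots, k_i - 1$. First I would write
\[
g = I_n + \sum_{s=1}^{k_i-1} g_s z^s, \quad u_- = I_n + \sum_{s=1}^{k_i-1} U_s z^s, \quad p_+ = I_n + \sum_{s=1}^{k_i-1} P_s z^s,
\]
with unknowns constrained by $U_s \in (\mathfrak{u}^{(i)}_{s+1})^-$ and $P_s \in (\mathfrak{p}^{(i)}_{s+1})^+$. Expanding the product $u_- p_+$ modulo $z^{k_i}$ and extracting the coefficient of $z^s$ gives the sequence of equations
\[
U_s + P_s = g_s - \sum_{r=1}^{s-1} U_r P_{s-r} \qquad (s = 1, \ldots, k_i - 1).
\]

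The key ingredient is the direct sum decomposition
\[
M(n,\mathbb{C}) = (\mathfrak{u}^{(i)}_{s+1})^- \oplus (\mathfrak{p}^{(i)}_{s+1})^+,
\]
which is immediate from the definitions: relative to the ordered decomposition $\mathbb{C}^n = \bigoplus_{t \in J^{(i)}_{s+1}} V^{(i)}_{\langle s+1, t\rangle}$, the parabolic piece $(\mathfrak{p}^{(i)}_{s+1})^+$ collects exactly the block-Homs indexed by pairs $(t_1, t_2)$ with $t_1 \succeq t_2$, while the nilpotent piece $(\mathfrak{u}^{(i)}_{s+1})^-$ collects those with $t_1 \prec t_2$. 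These two families disjointly exhaust all index pairs, so their sum is all of $M(n, \mathbb{C})$ and their intersection is zero.

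Using this, the induction is transparent. At $s = 1$ the equation reads $U_1 + P_1 = g_1$, and the decomposition uniquely determines $U_1$ and $P_1$. Assuming $U_1, \ldots, U_{s-1}$ and $P_1, \ldots, P_{s-1}$ have already been constructed, the right-hand side of the $s$th equation is a fixed element of $M(n, \mathbb{C})$, and the decomposition again provides a unique pair $(U_s, P_s)$ with the required membership. This simultaneously gives existence and uniqueness of the factorization, and the resulting $u_-$ and $p_+$ automatically lie in $\mathcal{U}^-_{k_i}$ and $\mathcal{P}^+_{k_i}$ since their constant terms are $I_n$ and the higher coefficients satisfy the membership conditions by construction.

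I do not expect any serious obstacle: the whole argument reduces to triangular linear algebra once the direct sum decomposition is noted. The only bookkeeping point is that the decomposition used at step $s$ involves the partition $J^{(i)}_{s+1}$ (not $J^{(i)}_s$), reflecting the fact that the relevant filtration is the one read off from $(B^{(i)}_{s+2}, \ldots, B^{(i)}_{k_i})$; but this indexing shift does not affect the induction itself.
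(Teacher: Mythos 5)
Your argument is correct. The note itself gives no proof of this proposition --- it is quoted as Lemma 3.5 of \cite{HY} --- and the degree-by-degree Gauss-type factorization you carry out, resting on the direct sum $M(n,\mathbb{C})=(\mathfrak{u}^{(i)}_{s+1})^{-}\oplus(\mathfrak{p}^{(i)}_{s+1})^{+}$ at each level and the triangular system $U_{s}+P_{s}=g_{s}-\sum_{r<s}U_{r}P_{s-r}$, is exactly the standard argument used there. Your implicit reading of $\mathcal{U}^{\pm}_{k_{i}}$ and $\mathcal{P}^{\pm}_{k_{i}}$ as having constant term $I_{n}$ plus the stated nilpotent/parabolic coefficients is the intended one (the displayed definitions in the note are slightly garbled on this point), and your remark that step $s$ uses the partition $J^{(i)}_{s+1}$ is accurate.
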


For $A\in (\mathcal{O}_{B^{(i)}}^{\text{tru}})^{o}$, take $g\in G_{k_{i}}^{o}$
so that $g^{-1}Ag=B^{(i)}$ and decompose $g=u_{-}p_{+}$ as above.
Define 
\begin{equation}\label{triangle}
	Q:=u_{-}-I_{n},\quad \quad P:=u_{-}^{-1}A|_{(\mathfrak{U}_{k_{i}}^{-})^{*}}.
\end{equation}
Notice that we can show that 
these $P$ and $Q$ are independent of the choice of $g$.
Conversely Theorem 3.6 in \cite{HY} 
tells us that $A-\mathrm{pr}_{\text{res}}(A)$ is uniquely 
determined by these $P$ and 
$Q$.

Now we are ready to define the map $\Phi_{\mathbf{B}}$.
Take $\mathbf{A}=(A^{(i)}(z))_{0\le i\le p}\in \mathfrak{M}(\mathbf{B})$ 
and define $x=\Phi_{\mathbf{B}}(\mathbf{A})$ as follows.
As we saw in Proposition \ref{decoupling}, 
choose $g_{i}\in \mathrm{GL}(n,\mathbb{C})$ and $\tilde{A}^{(i)}(z)
\in (\mathcal{O}_{B^{(i)}}^{\text{tru}})^{o}$ so that 
$g_{i}\tilde{A}^{(i)}(z)g_{i}^{-1}
=A^{(i)}(z)$ for $i\in I_{\text{irr}}$.
Then define 
\begin{align*}
	&\text{for }\rho^{[0,j]}_{i,j'}\in \mathsf{Q}_{1}^{0\to I_{\text{irr}}},
	&\begin{cases}
	x_{\rho^{[0,j]}_{i,j'}}:=(g_{i}^{-1})_{[i,j'],
	[0,j]},\\
x_{(\rho^{[0,j]}_{i,j'})^{*}}:=(g_{i}
\tilde{A}^{(i)}_{1})_{[0,j],[i,j']},
	\end{cases}\\
	&\text{for }\rho^{[k]}_{[i,j],[i,j']}\in \mathsf{Q}_{1}^{B^{(i)}},
\ i\in I_{\text{irr}},
&\begin{cases}
x_{\rho^{[k]}_{[i,j],[i,j']}}:=(P^{(i)})^{[k]}_{[i,j] ,
[i,j']},\\
x_{(\rho^{[k]}_{[i,j],[i,j']})*}:=(Q^{(i)})^{[k]}_{[i,j'],
[i,j]}.
\end{cases}
\end{align*}
Here  $\tilde{A}^{(i)}_{1}:=\mathrm{pr}_{\text{res}}(\tilde{A}^{(i)}(z))$ and 
$X_{[i,j],
[i',j']}$ denotes $\mathrm{Hom}_{\mathbb{C}}(V^{(i')}_{\langle 1,j'
\rangle},V^{(i)}_{\langle 1,j\rangle})$ component of $X\in M(n,\mathbb{C})$.
Furthermore, $P^{(i)}=\sum_{k=1}^{k_{i}-1}(P^{(i)})^{[k]}z^{-k-1}$ and 
$Q^{(i)}=\sum_{k=1}^{k_{i}-1}(Q^{(i)})^{[k]}z^{k}$ are defined from 
$\tilde{A}^{(i)}(z)$ by the above equations (\ref{triangle}).

Also define
\begin{align*}
	&x_{\rho_{[i,j,k]}}:\mathrm{Im}\prod_{l=1}^{k}
	\left((\tilde{A}^{(i)}_{1})_{j,j}-
\xi^{[i,j]}_{l}I_{n^{(i)}_{j}}\right)\hookrightarrow 
\mathrm{Im}\prod_{l=1}^{k-1}\left((\tilde{A}^{(i)}_{1})_{j,j}-
\xi^{[i,j]}_{l}I_{n^{(i)}_{j}}\right),\\
&x_{(\rho_{[i,j,k]})^{*}}:=\left((\tilde{A}^{(i)}_{1})_{j,j}-
\xi^{[i,j]}_{k}I_{n^{(i)}_{j}}
\right)\Big|_{\mathrm{Im}\prod_{l=1}^{k-1}\left((\tilde{A}^{(i)}_{1})_{j,j}-
\xi^{[i,j]}_{l}I_{n^{(i)}_{j}}\right)},
\end{align*}
for $i=0,\ldots,p$, $j=1,\ldots,m^{(i)}$ and $k=2,\ldots,e_{[i,j]}-1$.
Here $X_{j,j}$ denote 
$\mathrm{End}_{\mathbb{C}}(V^{(i)}_{\langle 1,j\rangle})$-components of $X$
for $j=1,\ldots,m^{(i)}$.
For $i\in I_{\text{irr}}$ and $j=1,\ldots,m^{(i)}$,
\begin{align*}
	&x_{\rho_{[i,j,1]}}:\mathrm{Im}
	\left((\tilde{A}^{(i)}_{1})_{j,j}-
\xi^{[i,j]}_{1}I_{n^{(i)}_{j}}\right)\hookrightarrow 
V^{(i)}_{\langle 1,j\rangle},\\
&x_{(\rho_{[i,j,1]})^{*}}:=(\tilde{A}^{(i)}_{1})_{j,j}-
\xi^{[i,j]}_{1}I_{n^{(i)}_{j}}.
\end{align*}
For $i\in I_{\text{reg}}$ and $j=1,\ldots,m^{(0)}$, 
\begin{align*}
	&x_{\rho^{[i,1,1]}_{[0,j]}}:\mathrm{Im}
	\left(\tilde{A}^{(i)}_{1}-
	\xi^{[i,1]}_{1}I_{n^{(i)}_{1}}\right)\hookrightarrow \mathbb{C}^{n}
	\xrightarrow[ \mathbb{C}^{n}=
	\bigoplus_{l=1}^{m^{(0)}}V^{(0)}_{\langle 1,l\rangle}]
{\text{projection along }}
V^{(0)}_{\langle 1,j\rangle},\\
&x_{(\rho^{[i,1,1]}_{[0,j]})^{*}}:=\left(\tilde{A}^{(i)}_{1}-
\xi^{[i,1]}_{1}I_{n^{(i)}_{1}}\right)\Big|_{V^{(0)}_{\langle 1,j\rangle}}.
\end{align*}
\subsubsection{Open embedding of $\mathfrak{M}(\mathbf{B})$ into 
a quiver variety}
Now we notice that 
\[
	\mathfrak{M}_{\lambda}(\mathsf{Q},\alpha)^{\text{dif}}\not\subset
\mathfrak{M}^{\text{reg}}_{\lambda}(
\mathsf{Q},\alpha)
\]
 in general,
 since the $\mathcal{L}$-irreducibility is weaker 
than the ordinary irreducibility. 
Thus it seems to be possible that
$\mathfrak{M}(\mathbf{B})\cong 
\mathfrak{M}_{\lambda}(\mathsf{Q},\alpha)^{\text{dif}}$ has singularities.

To consider this problem, we 
introduce an operation on $\mathfrak{M}(\mathbf{B})$ called {\em addition}.
\begin{df}[addition]\normalfont
	For a collection of complex numbers 
	\[\mathbf{v}=(v_{1},\ldots,v_{p})\in \mathbb{C}^{p},\]
	the {\em addition} translates
	a differential equation
	\[
		\frac{d}{dz}Y=\left(\sum_{i=1}^{p}\sum_{\nu=1}^{k_{i}}
		\frac{A^{(i)}_{\nu}}{(z-a_{i})^{\nu}}
	-\sum_{2\le\nu\le k_{0}}A^{(0)}_{\nu}z^{\nu-2}
\right)Y\in \mathfrak{M}(\mathbf{B})\]
	to
	\[
		\frac{d}{dz}Y=\left(\sum_{i=1}^{p}\sum_{\nu=1}^{k_{i}}
			\left(\frac{A^{(i)}_{\nu}}{(z-a_{i})^{\nu}}
			+\frac{v_{i}I_{n}}{z-a_{i}}\right)
	-\sum_{2\le\nu\le k_{0}}A^{(0)}_{\nu}z^{\nu-2}
\right)Y\in \mathfrak{M}(\mathbf{B+\mathbf{v}}).
	\]
Here 
$\mathbf{B}+\mathbf{v}:=(B_{i}+v_{i}I_{n}z^{-1})_{i=0,\ldots,p}$ 
with $v_{0}:=-\sum_{i=1}^{p}v_{i}$.
\end{df}
Thus for $\mathbf{v}\in\mathbb{C}^{p}$ the addition defines the bijection 
\[
	\mathrm{Add}_{\mathbf{v}}\colon 
	\mathfrak{M}_{\lambda}(\mathsf{Q},\alpha)^{\text{dif}}
	\longrightarrow
	\mathfrak{M}_{\lambda+\bar{\mathbf{v}}}(\mathsf{Q},\alpha)^{\text{dif}}
\]
where $\bar{\mathbf{v}}=(\bar{v}_{a})_{a\in\mathsf{Q}_{0}}$ is defined as follows,
\begin{align*}
	\bar{v}_{[i,j]}&:=v_{i}\quad\text{ for all }[i,j]\in 
	\mathsf{Q}_{0}^{\text{irr}} 
	\text{ and }i\in I_{\text{irr}}\backslash\{0\},\\
	\bar{v}_{[0,j]}&:=v_{0}+\sum_{k\in I_{\text{reg}}}v_{k}\quad
	\text{ for all }j=1,\ldots,m^{(0)},\\
	\bar{v}_{[i,j,k]}&:=0\quad\text{ for all }[i,j,k]\in
	\mathsf{Q}_{0}^{\text{leg}}.
\end{align*}

Then we can find a nice $\mathbf{v}$ such that $\mathrm{Add}_\mathbf{v}$
sends $\mathfrak{M}_{\lambda}(\mathsf{Q},\alpha)^{\text{dif}}$ into 
the quiver variety $\mathfrak{M}_{\lambda+\bar{\mathbf{v}}}^{\text{reg}}
(\mathsf{Q},\alpha)$.

\begin{thm}\label{embed}
	Let $\mathfrak{M}_{\lambda}(\mathsf{Q},\alpha)^{\text{dif}}$ be 
	as above. Then there exists $\mathbf{v}\in \mathbb{C}^{p}$ such 
	that 
	\begin{multline*}
		\mathfrak{M}_{\lambda+\bar{\mathbf{v}}}
		(\mathsf{Q},\alpha)^{\text{dif}}=\\
		\left\{
			x\in\mu^{-1}(\lambda+\bar{\mathbf{v}})\ \middle|\  \begin{array}{c}
				x\text{ is irreducible},\\
			\mathrm{det}
			\left(
			x_{\rho^{[0,j]}_{[i,j']}}\right)_{\substack{1\le j\le m^{(0)}\\1\le j'\le m^{(i)}}}\neq 0,
			i\in 
			I_{\text{irr}}
			\backslash\{0\}
		\end{array}
		\right\}
		\bigg/\mathbf{G}
		\subset
		\mathfrak{M}_{\lambda+\bar{\mathbf{v}}}^{\text{reg}}
		(\mathsf{Q},\alpha)
	\end{multline*}
\end{thm}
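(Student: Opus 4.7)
The plan is to upgrade $\mathcal{L}$-irreducibility to genuine irreducibility by choosing the addition parameter $\mathbf{v}$ generically, using the constraint that any subrepresentation of a point in $\mu^{-1}(\lambda+\bar{\mathbf{v}})$ must have dimension vector orthogonal to $\lambda+\bar{\mathbf{v}}$.

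First I would establish the basic orthogonality lemma: if $x\in\mu^{-1}(\lambda+\bar{\mathbf{v}})$ has a subrepresentation $y\subset x$ with underlying subspaces $W_a\subset V_a$ and dimension vector $\beta=\mathbf{dim}\,y$, then $(\lambda+\bar{\mathbf{v}})\cdot\beta=0$. This is the standard trace calculation: each $W_a$ is invariant under the moment map entry $\mu_\alpha(x)_a=(\lambda+\bar{\mathbf{v}})_a I_{\alpha_a}$, so summing the traces of its restrictions gives $(\lambda+\bar{\mathbf{v}})\cdot\beta$, while summing over $\rho\in\mathsf{Q}_{1}$ the contributions $\tr(x_{\rho}x_{\rho^{*}}|_{W_{t(\rho)}})-\tr(x_{\rho^{*}}x_{\rho}|_{W_{s(\rho)}})$ cancel in pairs by the cyclic property of the trace (applied to the restricted maps $W_{s(\rho)}\to W_{t(\rho)}$ and $W_{t(\rho)}\to W_{s(\rho)}$).

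Next I would compute $\bar{\mathbf{v}}\cdot\beta$ explicitly. Since $\bar{v}_{[i,j,k]}=0$ on leg vertices and $v_0=-\sum_{i=1}^p v_i$, a short calculation reduces the pairing to
\[
\bar{\mathbf{v}}\cdot\beta=\sum_{i\in I_{\text{irr}}\setminus\{0\}}v_i\left(\sum_{j=1}^{m^{(i)}}\beta_{[i,j]}-\sum_{j=1}^{m^{(0)}}\beta_{[0,j]}\right).
\]
For $\beta\notin\mathcal{L}$, at least one of the bracketed integers is nonzero, so $\bar{\mathbf{v}}\mapsto\bar{\mathbf{v}}\cdot\beta$ is a nontrivial $\mathbb{C}$-linear functional on the space of parameters $(v_1,\ldots,v_p)\in\mathbb{C}^{p}$. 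In particular, the equation $(\lambda+\bar{\mathbf{v}})\cdot\beta=0$ cuts out a proper affine hyperplane in this parameter space. Since only finitely many dimension vectors $\beta$ satisfy $0\le\beta\le\alpha$, I can choose $\mathbf{v}\in\mathbb{C}^{p}$ outside the union of the hyperplanes associated with all such $\beta\notin\mathcal{L}$ with $\beta\ne 0$ and $\beta\ne\alpha$; note $\alpha\in\mathcal{L}$ and $0\in\mathcal{L}$, so these vectors need not be excluded and the addition $\mathrm{Add}_{\mathbf{v}}$ is still well-defined on the moduli space.

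For such a generic $\mathbf{v}$, take any $x$ representing an element of $\mathfrak{M}_{\lambda+\bar{\mathbf{v}}}(\mathsf{Q},\alpha)^{\text{dif}}$ and let $y\subsetneq x$ be any proper nonzero subrepresentation in $\mu^{-1}(\lambda+\bar{\mathbf{v}})$. The orthogonality lemma forces $(\lambda+\bar{\mathbf{v}})\cdot\mathbf{dim}\,y=0$, but by our choice of $\mathbf{v}$ this is possible only if $\mathbf{dim}\,y\in\mathcal{L}$, contradicting $\mathcal{L}$-irreducibility. Hence $x$ has no proper nonzero subrepresentation at all, which by King's theorem means $x$ is stable, so the image lies in $\mathfrak{M}^{\text{reg}}_{\lambda+\bar{\mathbf{v}}}(\mathsf{Q},\alpha)$. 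The main obstacle is verifying the orthogonality lemma cleanly in the presence of both quiver arrows $\rho$ and reversed arrows $\rho^{*}$, which requires confirming that a subrepresentation in the double quiver $\overline{\mathsf{Q}}$ is jointly invariant under all $x_{\rho}$ and $x_{\rho^{*}}$ — a point that is automatic from the definition of subrepresentation of $\overline{\mathsf{Q}}$ but deserves explicit mention so that the trace cancellation argument applies as written.
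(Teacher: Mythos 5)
Your proposal is correct and follows essentially the same route as the paper: the paper's proof also reduces the statement to choosing $\mathbf{v}$ so that every $\beta\le\alpha$ with $(\lambda+\bar{\mathbf{v}})\cdot\beta=0$ lies in $\mathcal{L}$, and then invokes the standard fact that dimension vectors of subrepresentations of points in $\mu^{-1}(\lambda+\bar{\mathbf{v}})$ pair to zero with $\lambda+\bar{\mathbf{v}}$. The only difference is that the paper cites Lemma 5.9 of \cite{H2} for the existence of such a $\mathbf{v}$, whereas you prove it directly by the (correct) computation $\bar{\mathbf{v}}\cdot\beta=\sum_{i\in I_{\text{irr}}\setminus\{0\}}v_i\bigl(\sum_{j}\beta_{[i,j]}-\sum_{j}\beta_{[0,j]}\bigr)$ and a finite hyperplane-avoidance argument.
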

\begin{proof}
	Lemma 5.9 in \cite{H2} shows that there exists 
	$\mathbf{v}\in \mathbb{C}^{p}$
	such that $\lambda'=\lambda+\bar{\mathbf{v}}$ satisfies the following.
	If $\beta\in (\mathbb{Z}_{\ge 0})^{\mathsf{Q}_{0}}$ satisfies that 
	$\beta\le \alpha$, i.e., $\beta_{a}\le \alpha_{a}$ for all $a
	\in \mathsf{Q}_{0}$, and $\lambda'\cdot \beta=0$, then
	$\beta\in \mathcal{L}$. Thus any subrepresentation $y$ of 
	$x\in \mathfrak{M}_{\lambda'}(\mathsf{Q},\alpha)$ satisfies 
	that $\mathbf{dim\,}y\in \mathcal{L}$, that is, the $\mathcal{L}$-
	irreducibility implies the irreducibility.
\end{proof}
Thus we have an open embedding of $\mathfrak{M}(\mathbf{B})$ into 
the regular part of a quiver variety.
\begin{thm}\label{embedding}
Let us take $\mathbf{B}=
	 (B^{(i)})_{0\le i\le p}$, a collection of HTL normal forms,
	 the quiver $\mathsf{Q}$,
	 $\alpha\in (\mathbb{Z}_{\ge 0})^{\mathsf{Q}_{0}}$ and 
	 $\lambda\in \mathbb{C}^{\mathsf{Q}_{0}}$ as above. 
	 Then there exists 
	 $\mathbf{v}\in\mathbb{C}^{p}$
	 and an injection
	 \[
		 \Phi\colon \mathfrak{M}(\mathbf{B})
		 \hookrightarrow
		 \mathfrak{M}^{\text{reg}}_{\lambda+\bar{\mathbf{v}}}(\mathsf{Q},\alpha)
	 \]
	 such that 
	 \[
		 \Phi(\mathfrak{M}(\mathbf{B}))=
		 \left\{x\in 
			 \mathfrak{M}^{\text{reg}}_{\lambda+
			 \bar{\mathbf{v}}}(\mathsf{Q},
			 \alpha)\,\middle|\,
			 \mathrm{det}\left(
				 x_{\rho^{[0,j]}_{[i,j']}}
			 \right)_{\substack{1\le j\le m^{(0)}\\
			 1\le j'\le m^{(i)}}}\neq 0,\
		 i\in I_{\text{irr}}\backslash\{0\}
		 \right\}.
	 \]
	 In particular if $I_{\text{irr}}=\{0\}$, then $\mathbf{v}=\mathbf{0}$
	 and $\Phi$ is bijective.
\end{thm}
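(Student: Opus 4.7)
The plan is to define $\Phi$ as the composition of the addition operation with the quiver realization of Theorem \ref{quiverreal}, and then use Theorem \ref{embed} to land inside the regular locus. Concretely, I would choose $\mathbf{v}\in\mathbb{C}^{p}$ as in Theorem \ref{embed} applied to the collection $\mathbf{B}$ (so that Lemma 5.9 of \cite{H2} guarantees that $\lambda+\bar{\mathbf{v}}$ has no subdimension vectors $\beta\le\alpha$ with $(\lambda+\bar{\mathbf{v}})\cdot\beta=0$ outside $\mathcal{L}$), and then set
\[
  \Phi := \Phi_{\mathbf{B}+\mathbf{v}}\circ \mathrm{Add}_{\mathbf{v}}\colon
  \mathfrak{M}(\mathbf{B})\xrightarrow{\mathrm{Add}_{\mathbf{v}}}
  \mathfrak{M}(\mathbf{B}+\mathbf{v})\xrightarrow{\Phi_{\mathbf{B}+\mathbf{v}}}
  \mathfrak{M}_{\lambda+\bar{\mathbf{v}}}(\mathsf{Q},\alpha)^{\text{dif}}.
\]

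The first step is to observe that $\mathrm{Add}_{\mathbf{v}}$ is a bijection (scalar shifts of the residues preserve irreducibility of the differential equation, preserve stability, and preserve the truncated orbits up to a corresponding shift of $\mathbf{B}$). The second step is Theorem \ref{quiverreal} applied to $\mathbf{B}+\mathbf{v}$, which gives a bijection onto $\mathfrak{M}_{\lambda+\bar{\mathbf{v}}}(\mathsf{Q},\alpha)^{\text{dif}}$ whose description is exactly the one appearing on the right-hand side of the statement, except that it is cut out by $\mathcal{L}$-irreducibility. The third step is Theorem \ref{embed}: by the choice of $\mathbf{v}$, every $\mathcal{L}$-irreducible representation in $\mu^{-1}(\lambda+\bar{\mathbf{v}})$ that also satisfies the determinant non-vanishing is in fact irreducible, so
\[
  \mathfrak{M}_{\lambda+\bar{\mathbf{v}}}(\mathsf{Q},\alpha)^{\text{dif}}
  \subset \mathfrak{M}^{\text{reg}}_{\lambda+\bar{\mathbf{v}}}(\mathsf{Q},\alpha),
\]
with the displayed characterization. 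Composing the three steps gives the injection $\Phi$ with the claimed image.

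The main obstacle is really hidden in Theorem \ref{embed}: one must produce an additive shift $\mathbf{v}$ such that no spurious proper subrepresentation can appear. This amounts to a Diophantine-style statement on $\lambda$ (that only dimension vectors in $\mathcal{L}$ satisfy the moment-map vanishing on dimensions), and it is precisely this that Lemma 5.9 of \cite{H2} provides. Once this input is available, the rest is formal bookkeeping: $\mathrm{Add}_{\mathbf{v}}$ changes only the residue part of each $A^{(i)}(z)$ by the scalar $v_{i}I_{n}$, hence under $\Phi_{\mathbf{B}+\mathbf{v}}$ it translates into a shift of $\lambda$ by $\bar{\mathbf{v}}$ on the vertices $[i,j]$ with $i\in I_{\text{irr}}\setminus\{0\}$ (by $v_{i}$) and on the vertices $[0,j]$ (by $v_{0}+\sum_{k\in I_{\text{reg}}}v_{k}$), while leaving the leg-vertex components $[i,j,k]$ unchanged; this is exactly the definition of $\bar{\mathbf{v}}$.

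Finally, for the special case $I_{\text{irr}}=\{0\}$, the arrow set $\mathsf{Q}_{1}^{0\to I_{\text{irr}}}$ is empty, so the determinant condition is vacuous, and the defining condition of $\mathcal{L}$ (equality of sums of coordinates over each irregular factor) is imposed only on the empty collection of indices $i\in I_{\text{irr}}\setminus\{0\}$; hence $\mathcal{L}=\mathbb{Z}^{\mathsf{Q}_{0}}$, so $\mathcal{L}$-irreducibility coincides with irreducibility. Consequently $\mathfrak{M}_{\lambda}(\mathsf{Q},\alpha)^{\text{dif}}=\mathfrak{M}^{\text{reg}}_{\lambda}(\mathsf{Q},\alpha)$ already, no addition is needed, and we may take $\mathbf{v}=\mathbf{0}$; then $\Phi=\Phi_{\mathbf{B}}$ is the bijection of Theorem \ref{quiverreal} itself.
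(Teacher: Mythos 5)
Your proposal is correct and follows essentially the same route the paper takes: the theorem is obtained by composing the addition bijection $\mathrm{Add}_{\mathbf{v}}$ with the quiver realization of Theorem \ref{quiverreal}, with $\mathbf{v}$ supplied by Theorem \ref{embed} (i.e.\ Lemma 5.9 of \cite{H2}) so that $\mathcal{L}$-irreducibility upgrades to irreducibility, and your observation that $\mathcal{L}=\mathbb{Z}^{\mathsf{Q}_{0}}$ when $I_{\text{irr}}=\{0\}$ is exactly why $\mathbf{v}=\mathbf{0}$ suffices and $\Phi$ is bijective in that case.
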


\begin{cor}
	If $\mathfrak{M}(\mathbf{B})\neq \emptyset$, then 
	it can be seen as 
	a connected symplectic complex manifold of dimension $2p(\alpha)$.
\end{cor}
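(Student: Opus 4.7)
The plan is to deduce the corollary directly from the open embedding of Theorem \ref{embedding} together with Crawley-Boevey's results recalled in \S\ref{some geometry}. The main observation is that the hypothesis $\mathfrak{M}(\mathbf{B})\neq\emptyset$ is transferred through $\Phi$ into the non-emptiness of the ambient quiver variety $\mathfrak{M}^{\text{reg}}_{\lambda+\bar{\mathbf{v}}}(\mathsf{Q},\alpha)$, after which the structural properties follow from general quiver-variety geometry.

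More precisely, I would argue as follows. First, by Theorem \ref{embedding} there exists $\mathbf{v}\in\mathbb{C}^{p}$ such that $\Phi$ embeds $\mathfrak{M}(\mathbf{B})$ onto the locus in $\mathfrak{M}^{\text{reg}}_{\lambda+\bar{\mathbf{v}}}(\mathsf{Q},\alpha)$ cut out by the non-vanishing of the determinants $\det(x_{\rho^{[0,j]}_{[i,j']}})$ for $i\in I_{\text{irr}}\setminus\{0\}$. These are Zariski open conditions, so $\Phi(\mathfrak{M}(\mathbf{B}))$ is a Zariski open subset of $\mathfrak{M}^{\text{reg}}_{\lambda+\bar{\mathbf{v}}}(\mathsf{Q},\alpha)$. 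Since $\mathfrak{M}(\mathbf{B})\neq\emptyset$, this open subset is nonempty, hence $\mathfrak{M}^{\text{reg}}_{\lambda+\bar{\mathbf{v}}}(\mathsf{Q},\alpha)\neq\emptyset$ as well.

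Next, the Crawley-Boevey corollary stated in \S\ref{some geometry} tells us that once $\mathfrak{M}^{\text{reg}}_{\lambda+\bar{\mathbf{v}}}(\mathsf{Q},\alpha)$ is non-empty, we must have $\alpha\in\Sigma_{\lambda+\bar{\mathbf{v}}}$, and then $\mathfrak{M}^{\text{reg}}_{\lambda+\bar{\mathbf{v}}}(\mathsf{Q},\alpha)$ is a connected complex symplectic manifold of dimension $2p(\alpha)$. In fact, by Theorem \ref{CB} (and Theorem \ref{CB2}) the entire variety $\mathfrak{M}_{\lambda+\bar{\mathbf{v}}}(\mathsf{Q},\alpha)$ is irreducible, so its regular part $\mathfrak{M}^{\text{reg}}_{\lambda+\bar{\mathbf{v}}}(\mathsf{Q},\alpha)$ is also irreducible.

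Finally, I would conclude using the basic fact that a nonempty Zariski open subset of an irreducible complex variety is itself irreducible, hence connected, and of the same dimension. Applied to $\Phi(\mathfrak{M}(\mathbf{B}))$ inside the irreducible manifold $\mathfrak{M}^{\text{reg}}_{\lambda+\bar{\mathbf{v}}}(\mathsf{Q},\alpha)$, this yields connectedness and dimension $2p(\alpha)$ for $\mathfrak{M}(\mathbf{B})$; the complex symplectic structure is simply the restriction of $\underline{\omega}$. The only potentially delicate point would have been connectedness (an open submanifold of a connected manifold need not be connected), but here it is supplied for free by the irreducibility half of Theorem \ref{CB}; beyond that the argument is purely formal.
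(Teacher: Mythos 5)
Your proof is correct and takes essentially the same route as the paper's: both reduce the statement to Crawley-Boevey's results (Theorems \ref{CB}, \ref{CB2} and the ensuing corollary) together with the fact that a nonempty open subset of an irreducible variety is irreducible, hence connected. The only cosmetic difference is that the paper runs the openness-plus-irreducibility argument upstairs on $\mu^{-1}(\lambda+\bar{\mathbf{v}})$ and then pushes forward along the quotient map (invoking GAGA for connectedness in the analytic topology), whereas you apply it directly to the image $\Phi(\mathfrak{M}(\mathbf{B}))$ inside $\mathfrak{M}^{\text{reg}}_{\lambda+\bar{\mathbf{v}}}(\mathsf{Q},\alpha)$.
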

\begin{proof}
	If $\mathfrak{M}(\mathbf{B})\neq \emptyset$, then 
	$\mathfrak{M}_{\lambda}^{\text{reg}}(\mathsf{Q},\alpha)\neq 
	\emptyset$ and 
	thus $\mathfrak{M}_{\lambda+\bar{\mathbf{v}}}^{\text{reg}}
	(\mathsf{Q},\alpha)\neq 
	\emptyset$ with $\mathbf{v}\in\mathbb{C}^{p}$ chosen as in
	the previous theorem.
	Thus by Theorems \ref{CB} and \ref{CB2},
	it suffices to check the connectedness.

	Theorem \ref{CB2} says that $\mu^{-1}(\lambda+\bar{
	\mathbf{v}})$ is an irreducible variety.
	Let us recall that 
	\[
		\mu^{-1}(\lambda+\bar{\mathbf{v}})^{\text{sta}}:=
	\left\{
		x\in \mu^{-1}(\lambda+\bar{\mathbf{v}})\,\middle|\,
		x\text{ is stable under 
		}\mathbf{G}
\right\}
\]
is an open subset of $\mu^{-1}(\lambda+\bar{\mathbf{v}})$ (see Proposition
5.15 in \cite{Mu} for instance).
Since $\mu^{-1}(\lambda+\bar{\mathbf{v}})^{\text{sta}}=\mu^{-1}(\lambda+\bar{\mathbf{v}})^{\text{irr}}$,
\[\mu^{-1}(\lambda+\bar{\mathbf{v}})^{\text{det}}:=
	\left\{
			x\in\mu^{-1}(\lambda+\bar{\mathbf{v}})^{\text{irr}}\ 
			\middle|\  
			\mathrm{det}
			\left(
			x_{\rho^{[0,j]}_{[i,j']}}\right)_{\substack{1\le j\le m^{(0)}\\1\le j'\le m^{(i)}}}\neq 0,
			i\in 
			I_{\text{irr}}
			\backslash\{0\}
		\right\}
	\]
	is also an open subset of $\mu^{-1}(\lambda+\bar{\mathbf{v}})$.
	Since open subsets of an irreducible topological space are connected,
	$\mu^{-1}(\lambda+\bar{\mathbf{v}})^{\text{det}}$ is connected.
	Moreover   
	$\mathfrak{M}^{\text{reg}}_{\lambda+
	\bar{\mathbf{v}}}(\mathsf{Q},\alpha)^{\text{dif}}$ is the 
	image of the continuous projection 
	from $\mu^{-1}(\lambda+\bar{\mathbf{v}})^{\text
	{det}}$, thus it is connected not only as an algebraic variety but also
	as an analytic space by GAGA.
\end{proof}

\subsubsection{Non-emptiness of $\mathfrak{M}(\mathbf{B})$}
We close this subsection by giving a necessary and sufficient condition for $\mathfrak{M}(\mathbf
{B})\neq \emptyset$.
Define a set ${\Sigma}_{\lambda}^{\text{dif}}$ consists of 
$\beta\in \mathcal{L}^{+}$ satisfying
\begin{enumerate}
		\item $\beta$ is a positive root of $\mathsf{Q}$ and 
			$\beta\cdot \lambda=0$,
		\item for any decomposition $\beta=\beta_{1}+\cdots+
			\beta_{r}$ where $\beta_{i}\in 
			\mathcal{L}^{+}$ are positive roots of 
			$\mathsf{Q}$ satisfying $\beta_{i}\cdot
			\lambda=0$, we have 
			\[
				p(\beta)>
				p(\beta_{1})+\cdots +p(\beta_{r}).
			\]
	\end{enumerate}

	\begin{thm}[Non-emptiness of moduli spaces. Theorem 0.9 in 
		\cite{H2}]\label{DSproblem}
The moduli space $\mathfrak{M}(\mathbf{B})\neq \emptyset$
if and only if $\alpha\in \Sigma_{\lambda}^{\text{dif}}$.
\end{thm}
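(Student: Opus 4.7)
The plan is to leverage the open embedding $\Phi$ from Theorem \ref{embedding} together with Crawley-Boevey's nonemptiness criterion (Theorem \ref{CB}) at the shifted parameter $\lambda+\bar{\mathbf{v}}$. Since $\Phi$ realizes $\mathfrak{M}(\mathbf{B})$ as the open locus of $\mathfrak{M}^{\text{reg}}_{\lambda+\bar{\mathbf{v}}}(\mathsf{Q},\alpha)$ on which each $n\times n$ matrix $(x_{\rho^{[0,j]}_{[i,j']}})_{j,j'}$ is invertible, the question reduces to determining in purely root-theoretic terms when this open subset is nonempty.

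First I would establish the equivalence $\alpha\in\Sigma_\lambda^{\text{dif}}\Longleftrightarrow\alpha\in\Sigma_{\lambda+\bar{\mathbf{v}}}$. A direct computation shows that $\bar{\mathbf{v}}\cdot\beta=0$ for every $\beta\in\mathcal{L}$: expand the pairing, collect the contributions indexed by $\mathsf{Q}_0^{\text{irr}}$ and $\mathsf{Q}_0^{\text{leg}}$, use the sublattice relation $\sum_j\beta_{[0,j]}=\sum_j\beta_{[i,j]}$ and the identity $v_0=-\sum_{i=1}^p v_i$, and observe that the telescoping leaves zero. Hence on $\mathcal{L}^+$ the conditions $\lambda\cdot\beta=0$ and $(\lambda+\bar{\mathbf{v}})\cdot\beta=0$ coincide. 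Moreover, by the choice of $\mathbf{v}$ furnished by Theorem \ref{embedding} (Lemma 5.9 of \cite{H2}), any $\beta\le\alpha$ with $(\lambda+\bar{\mathbf{v}})\cdot\beta=0$ automatically lies in $\mathcal{L}^+$, so the decompositions in the definition of $\Sigma_{\lambda+\bar{\mathbf{v}}}$ tautologically restrict to $\mathcal{L}^+$, matching condition (2) of $\Sigma_\lambda^{\text{dif}}$.

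With this identification in hand, the forward direction is immediate: if $\mathfrak{M}(\mathbf{B})\neq\emptyset$ then by Theorem \ref{embedding} we have $\mathfrak{M}^{\text{reg}}_{\lambda+\bar{\mathbf{v}}}(\mathsf{Q},\alpha)\neq\emptyset$, so the corollary of Theorems \ref{CB} and \ref{CB2} yields $\alpha\in\Sigma_{\lambda+\bar{\mathbf{v}}}=\Sigma_\lambda^{\text{dif}}$. The hard part will be the converse. Given $\alpha\in\Sigma_\lambda^{\text{dif}}=\Sigma_{\lambda+\bar{\mathbf{v}}}$, Crawley-Boevey's theorems furnish $\mathfrak{M}^{\text{reg}}_{\lambda+\bar{\mathbf{v}}}(\mathsf{Q},\alpha)\neq\emptyset$ inside the irreducible variety $\mu^{-1}(\lambda+\bar{\mathbf{v}})$; it remains to exhibit a point of this regular part lying in the open stratum where every determinant $\det(x_{\rho^{[0,j]}_{[i,j']}})_{j,j'}$ is nonzero, for then $\Phi^{-1}$ delivers an element of $\mathfrak{M}(\mathbf{B})$.

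The main obstacle is precisely this determinant non-vanishing. Since the conditions cut out a Zariski-open subset of the irreducible $\mu^{-1}(\lambda+\bar{\mathbf{v}})$, it suffices to produce a single irreducible representation on which the matrices are invertible. The cleanest route is a direct construction guided by the recipe for $\Phi_{\mathbf{B}}$: start from a generic element of $\prod_i(\mathcal{O}^{\text{tru}}_{B^{(i)}})^o$ compatible with the decoupling of Proposition \ref{decoupling}, and arrange the framing matrices $g_i\in\mathrm{GL}(n,\mathbb{C})$ so that each block $(g_i^{-1})_{[i,j'],[0,j]}$ is generic; the prescribed formulas for $x_{\rho^{[0,j]}_{[i,j']}}$ then yield invertible $n\times n$ matrices by design, while $\mathcal{L}$-irreducibility is inherited from $\alpha\in\Sigma_\lambda^{\text{dif}}$ via Crawley-Boevey's canonical decomposition analysis adapted to the sublattice $\mathcal{L}$. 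Should the direct construction prove stubborn on the irreducibility side, a backup is to combine reflection-functor symmetries of $\mathfrak{M}^{\text{reg}}_{\lambda+\bar{\mathbf{v}}}(\mathsf{Q},\alpha)$ with Kac-type genericity arguments to transport any irreducible representation into the determinant non-vanishing stratum.
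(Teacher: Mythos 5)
First, a caveat: the paper does not actually prove this statement --- it is quoted as Theorem 0.9 of \cite{H2} --- so there is no in-paper argument to compare against, and your proposal has to stand on its own. Your overall architecture (transport the problem through the open embedding of Theorem \ref{embedding} and invoke Crawley-Boevey's criterion at the shifted parameter $\lambda+\bar{\mathbf{v}}$) is the right frame, and the pieces you verify are correct: the computation $\bar{\mathbf{v}}\cdot\beta=0$ for $\beta\in\mathcal{L}$ does telescope to zero using $v_0=-\sum_{i\ge 1}v_i$ and the sublattice relation; the equivalence $\alpha\in\Sigma_\lambda^{\text{dif}}\Leftrightarrow\alpha\in\Sigma_{\lambda+\bar{\mathbf{v}}}$ follows from the property of $\mathbf{v}$ supplied by Lemma 5.9 of \cite{H2}; and the forward implication is immediate from Theorem \ref{embedding} together with the corollary to Theorems \ref{CB} and \ref{CB2}.

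The gap is in the converse, exactly where you flag ``the main obstacle'': given $\alpha\in\Sigma_{\lambda+\bar{\mathbf{v}}}$ you must exhibit one point of the irreducible variety $\mu^{-1}(\lambda+\bar{\mathbf{v}})$ on which every matrix $\left(x_{\rho^{[0,j]}_{[i,j']}}\right)_{j,j'}$ is invertible, and your proposed resolution is circular. A ``generic element of $\prod_i(\mathcal{O}^{\text{tru}}_{B^{(i)}})^{o}$'' whose residues sum to zero and which is $\mathcal{L}$-irreducible is precisely an element of $\mathfrak{M}(\mathbf{B})$, i.e.\ a solution of the additive Deligne--Simpson problem whose existence is the content of the theorem; genericity of the framings $g_i$ yields invertible blocks only after such a tuple is in hand. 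The phrase ``irreducibility is inherited \dots via Crawley-Boevey's canonical decomposition analysis adapted to the sublattice $\mathcal{L}$'' names the hard technical core of \cite{H2} rather than supplying it. The backup route also fails as stated: reflection functors are bijections between \emph{different} quiver varieties $\mathfrak{M}_{\mu}(\mathsf{Q},\beta)$, so they cannot move a fixed point of $\mathfrak{M}^{\text{reg}}_{\lambda+\bar{\mathbf{v}}}(\mathsf{Q},\alpha)$ off the determinant-vanishing locus. What one can do --- and what \cite{H2} actually does --- is use the $W^{\mathrm{mc}}$-action of Theorem \ref{reduction} to reduce $\alpha$ to the fundamental domain and argue there, but that requires showing middle convolutions preserve (non)emptiness in both directions plus a treatment of the base cases, none of which appears in your sketch. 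Until the nonemptiness of the determinant locus inside $\mu^{-1}(\lambda+\bar{\mathbf{v}})$ is established, the ``if'' direction is unproven.
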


Let us recall the 
{\em spectral type} which is already appeared in Section 1.2 in \cite{KNS}.
Consider the inductive limit
\[
	\mathbb{Z}^{\infty}:=\lim_{\longrightarrow}\mathbb{Z}^{n}
\]
defined by inclusions $\phi_{i,i+1}\colon 
\mathbb{Z}^{i}\ni (a_{1},\ldots,a_{i})
\mapsto (a_{1},\ldots,a_{i},0)\in \mathbb{Z}^{i+1}$ for $i=1,2, \ldots$.

\begin{df}[spectral type and index of rigidity]\label{spectype}\normalfont
	The {spectral type} of $\mathbf{B}$ is the pair 
	\[
		\left(\mathbf{m}_{\alpha},\left(d_{i}(j,j')
			\right)_{\substack{i=0,\ldots,p\\
	1\le j<j'\le m^{(i)}}}\right)
\]
where $\mathbf{m}_{\alpha}=\left((m_{[i,j,1]},\ldots,
		m_{[i,j,e_{[i,j]}]})\right)_{
		\substack{0\le i\le p\\1\le j\le m^{(i)}}}\in 
		\bigoplus_{i=0}^{p}\bigoplus_{j=1}^{m^{(i)}}
	\mathbb{Z}^{\infty}$ which satisfies 
	$\sum_{j=1}^{m^{(0)}}\sum_{k=1}^{e_{[0,j]}}m_{[0,j,k]}=\cdots
	=\sum_{j=1}^{m^{(p)}}\sum_{k=1}^{e_{[p,j]}}m_{[p,j,k]}$
	is defined by 
	\[
		m_{[i,j,k]}:=\alpha_{[i,j,k-1]}-\alpha_{[i,j,k]}
	\]
	where 
	\[
		\alpha_{[i,j,0]}=\begin{cases}
			\alpha_{[i,j]}&\text{ if }i\in I_{\text{irr}},\\
			\sum_{k=1}^{m^{(0)}}\alpha_{[0,k]}
			&\text{ if }i\in I_{\text{reg}}
		\end{cases}
\]
	and $\alpha_{[i,j,e_{[i,j]}]}=0$. 
Sometimes we write $\mathbf{m}_{\alpha}=(\mathbf{m}_{\alpha},d_{i}(j,j'))$ for short.

The {\em index of rigidity} of $\mathbf{m}_{\alpha}$ is defined by
\[
	\mathrm{idx}\mathbf{m}:=2q(\alpha).
\]
\end{df}

Here we note that we do not distinguish $\mathbf{m}_{\alpha}$ and 
\[
	\left(\left((m_{[\sigma(i),s(j),t(1)]},\ldots,
		m_{[\sigma(i),s(j),t(e_{[\sigma(i),s(j)]})]})\right)_{
		\substack{0\le i\le p\\1\le j\le m^{(i)}}},
		\left(d_{\sigma(i)}(s(j),s(j'))\right)_{\substack{i=0,\ldots,p\\
	1\le j<j'\le m^{(i)}}}
	\right)
\]
for any permutations 
$\sigma\in \mathfrak{S}_{p+1}$, $s\in \mathfrak{S}_{m^{(i)}}$ 
and $t\in \mathfrak{S}_{e_{[i,j]}}$ for $i=0,\ldots,p$, $j=1,\ldots, m^{(i)}$.

For convenience we introduce the following notation for $\mathbf{m}$.
    The each number $d_{i}(j,j')+1$ is expressed by the 
    number of parentheses $(\ )$ between the sequences 
    $m_{[i,j,1]},m_{[i,j,2]},\ldots$ and $m_{[i,j',1]}, m_{[i,j',2]},\ldots$. 
    For instance, if  
    $$\mathbf{m}_{\beta}=\cdots m_{[i,j,1]}m_{[i,j,2]}\ldots 
    m_{[i,j,l_{i,j}]}))( (m_{[i,j',1]}m_{[i,j',2]}\cdots,$$ 
    then we can see the double parenthesis $))\,( ($ 
    between $m_{[i,j,1]}\ldots,$ and $m_{[i,j',1]}\ldots$. 
    This means $d_{i}(j,j')=1$.
    
    For example, put   
    $p=1$, $(m^{(0)},m^{(1)})=(2,3)$, $(e_{[0,1]},e_{[0,2]},e_{[1,1]},e_{[1,2]},e_{[1,3]})=(1,2,1,1,2)$ 
    and 
    $(d_{0}(1,2),d_{1}(1,2),d_{1}(2,3),d_{1}(1,3))=(0,0,1,1)$. 
    
    Then $\mathbf{m}=( (m_{[i,j,1]},\ldots,
    m_{[i,j,l_{i,j}]}))_{\substack{0\le i\le p\\1\le j\le k_{i}}}$ 
    is written by
    \[
	    (m_{[0,1,1]})(m_{[0,2,1]}m_{[0,2,2]}),\,
	    ( (m_{[1,1,1]})(m_{[1,2,1]}))( (m_{[1,3,1]}m_{[1,3,2]})).
    \]

    \subsection{Integrable deformation}\label{Integrable family}
Let us introduce {\em integrable admissible families} 
of connections following 
Boalch \cite{Boa1} and Yamakawa \cite{Y2}.

Let $\mathbb{T}$ be a contractible complex manifold and 
$a_{i}\colon\mathbb{T} \rightarrow \mathbb{P}^{1}\times 
\mathbb{T}$, $i=0,\ldots,p$,
holomorphic sections of the fiber bundle 
$\pi \colon \mathbb{P}^{1}\times \mathbb{T}\rightarrow \mathbb{T}$.
Moreover assume that 
\[
	a_{i}(t)\neq a_{j}(t)\text{ if }i\neq j
\]
in each fiber $\mathbb{P}^{1}_{t}:=\mathbb{P}^{1}\times\{t\}$.
Moreover we fix a standard coordinate $z\colon \mathbb{P}^{1}_{t}\cong \mathbb{C}\cup \{\infty\}$
so that $a_{0}(s)=\infty$ and $d_{\mathbb{T}}z=0$ on the trivial bundle $\mathbb{P}^{1}\times 
\mathbb{T}\rightarrow \mathbb{T}$.
Let us set 
\[
	z_{i}\colon \mathbb{P}^{1}\times \mathbb{T}\rightarrow \mathbb{T};
	\quad (z,t)\mapsto \begin{cases}
		1/z&(i=0)\\
		z-a_{i}(t)&(i\neq 0)
	\end{cases}
\]
for $i=0,\ldots,p$.
Let us consider a family $\mathbf{B}(t)=(B^{(i)}(t))_{i=0,\ldots,p}$
of collections of HTL normal forms of the forms
\[
	B^{(i)}(t)=\mathrm{diag}\left(
	q^{(i)}_{1}(t,z_{i}^{-1})I_{n_{1}^{(i)}}+{R}_{1}^{(i)}(t)z_{i}^{-1},
	\ldots,
	q^{(i)}_{m^{(i)}}(t,z_{i}^{-1})I_{n^{(i)}_{m^{(i)}}}+R_{m^{(i)}}^{(i)}(t)z_{i}^{-1}
	\right).
\]
Here all mappings $\mathbb{T}\ni t\mapsto q^{(i)}_{j}(t,z)\in\mathbb{C}[z]$ 
and $\mathbb{T}\ni t\mapsto R^{(i)}_{j}(t)\in M(n^{(i)}_{j},\mathbb{C})$
depend smoothly on $t\in \mathbb{T}$.
Define $d_{i}(t;j,j'):=\mathrm{deg}_{\mathbb{C}[z]}(q^{(i)}_{j}(t,z)-
\bar{q}^{(i)}_{j'}(t,z^{-1}))-2$.
We say that $\mathbf{B}(t)$ is an {\em admissible family\footnote{This is a little stronger 
	condition than that in \cite{Y2}.}} of the collections 
of HTL normal forms if 
$d_{i}(t;j,j')$ and $R^{(i)}_{j}(t)$ are independent of $t$ for all $i=0,\ldots,p$ and 
$j,j'=1,\ldots,m^{(i)}$.

Let $(\mathbf{B}(t))_{t\in \mathbb{T}}$ be an admissible family of collections 
of HTL normal forms.
Then as we saw in Remark \ref{invariance}, 
we can find quiver $\mathsf{Q}$, $\alpha\in \mathbb{Z}^{\mathsf{Q}_0}$
and $\lambda\in \mathbb{C}^{\mathsf{Q}_{0}}$ independently of $t\in \mathbb{T}$  such that  we have 
isomorphisms
\[
	\Phi_{\mathbf{B}(t)}\colon \mathfrak{M}(\mathbf{B}(t))\xrightarrow[]{\sim}
	\mathfrak{M}_{\lambda}(\mathsf{Q},\alpha)^{\text{dif}}
\]
for all $t\in \mathbb{T}$. 
We further say that the admissible family $(\mathbf{B}(t))_{t\in 
	\mathbb{T}}$ is {\em non-resonant} if eigenvalues of $R^{(i)}_{j}(t)$
	never differ by any integer for each $i=0,\ldots,p$ and $j=1,\ldots,m^{(i)}$,
	which is equivalent to 
	the condition,
\[
	\lambda_{[i,j,k]}\not\in \mathbb{Z}\backslash\{0\}\text{ for all }
	[i,j,k]\in \mathsf{Q}_{0}^{\text{leg}}.
\]

\begin{df}[admissible family]\normalfont
	Then the family 
	$\left((\mathcal{O}_{\mathbb{P}^{1}_{t}}^{n},\nabla_{t})\right)_{t\in\mathbb{T}}$
	of meromorphic connections
	is called an {\em admissible family} with  $(\mathbf{B}(t))_{t\in \mathbb{T}}$
	if the followings are satisfied:
	\begin{enumerate}
		\item the admissible family $(\mathbf{B}(t))_{t\in \mathbb{T}}$ is 
			non-resonant.
		\item We have $(\mathcal{O}_{\mathbb{P}^{1}_{t}}^{n},\nabla_{t})\in 
			\mathfrak{M}(\mathbf{B}(t))$ for all $t\in \mathbb{T}$.
		\item For each $i=0,\ldots,p$ and fixed $t\in \mathbb{T}$, 
			let us write $\nabla_{t}=d-A_{i}(t,z_{i})\,dz_{i}$,
			$A_{i}(t,z_{i})\in M(n,\mathbb{C}(\!(z_{i})\!))$ near $z_{i}=0$.
		Then there exists a holomorphic map
			$\widehat{g_{i}}\colon 
			\mathbb{T}\rightarrow 
			\mathrm{GL}(n,\mathbb{C}[\![z_{i}]\!])$
			such that 
			\[
				A_{i}(t,z_{i})=\widehat{g}_{i}(t)[B^{(i)}(t)].
			\]
	\end{enumerate}
	As we see above, we can define the triple $(\mathsf{Q},\lambda,\alpha)$
	from $(\mathbf{B}(t))_{t\in \mathbb{T}}$. We call this triple 
	the {\em spectral data} of the admissible family  $\left((\mathcal{O}_{\mathbb{P}^{1}_{t}}^{n},\nabla_{t})\right)_{t\in\mathbb{T}}$
	with 
	$(\mathbf{B}(t))_{t\in \mathbb{T}}$.
	We call the  
	number
	$2p(\alpha)=\mathrm{dim}
	(\mathfrak{M}_{\lambda}(\mathsf{Q},\alpha))=\mathrm{dim}(\mathfrak{M}(\mathbf{B}(t)))$,
	the {\em dimension} of the admissible family.
\end{df}

\begin{df}[integrable family]\normalfont
	Let $\left((\mathcal{O}_{\mathbb{P}^{1}_t}^{n},\nabla_{t})\right)_{t\in \mathbb{T}}$
	be an admissible family with $(\mathbf{B}(t))_{t\in \mathbb{T}}$. 
	If there exists a flat 
	meromorphic connection $\widehat{\nabla}$  on $\mathcal{O}_{
		\mathbb{P}^{1}\times \mathbb{T}}^{n}$ with 
		poles on $\bigcup_{i=0}^{p}a_{i}(\mathbb{T})$ such that
	$\widehat{\nabla}|_{\mathbb{P}^{1}_{t}}=\nabla_{t}$,
	then we say that  the family $\left((\mathcal{O}_{\mathbb{P}^{1}_t}^{n},\nabla_{t})\right)
	_{t\in \mathbb{T}}$ is {\em integrable}.
	In this case such $(\mathcal{O}_{
		\mathbb{P}^{1}\times \mathbb{T}}^{n},\widehat{\nabla})$ is called a
	{\em flat extension} of $\left((\mathcal{O}_{\mathbb{P}^{1}_t}^{n},\nabla_{t})\right)_{t\in \mathbb{T}}$.
	\end{df}
\section{Middle convolutions, Weyl groups and integrable deformations}
\label{middleconvolutionandreflection}
In the previous section, we saw that moduli spaces of stable meromorphic 
connections are realized as quiver varieties. 
As it is known, quiver varieties 
have Weyl group symmetries generated by 
reflection functors (see \cite{CH} and \cite{Nak2}).
Similarly on the moduli space side, we also have the symmetries  
generated by middle convolutions.
In this section, we see the relationship between middle convolutions 
and Weyl groups of quivers and give a classification of their symmetries 
in certain lower dimensional cases. And we see the symmetries of integrable 
families as an application.
\subsection{A review of middle convolutions}
Let us give a review of middle convolutions on differential 
equations with irregular singular points.
The middle convolution is originally defined by Katz in 
\cite{Katz} and 
reformulated as an operation on Fuchsian systems by 
Dettweiler-Reiter \cite{DR2}, see also \cite{DR}  and V\"olklein's paper \cite{V}.
There are several studies to generalize the middle convolution
to non-Fuchsian differential equations, 
see \cite{A},\cite{Kaw},\cite{T},\cite{Y1}
for example.
Among them we shall give a review of middle convolutions following  \cite{Y1}.

Let us take $(\mathcal{O}^{n},\nabla)\in \mathfrak{M}(\mathbf{B})$ and write
\[
	\nabla=d-\left(\sum_{i=1}^{p}\sum_{\nu=1}^{k_{i}}
		\frac{A^{(i)}_{\nu}}{(z-a_{i})^{\nu}}
	-\sum_{2\le\nu\le k_{0}}A^{(0)}_{\nu}z^{\nu-2}
\right)dz.
\]
Set \[
	\mathbf{A}=(\sum_{j=1}^{k_{i}}A^{(i)}_{j}z^{-j})_{0\le i\le p}
\in \prod_{i=0}^{p}\mathcal{O}_{B^{(i)}}\]
where $A_{1}^{(0)}:=-\sum_{i=1}^{p}
A^{(i)}_{1}$.  

Let us construct a 5-tuple
$(V,W,T,Q,P)$ consisting of $\mathbb{C}$-vector spaces $V$, $W$
and 
$T\in \mathrm{End}_{\mathbb{C}}(W)$,
$Q\in \mathrm{Hom}_{\mathbb{C}}(W,V)$, 
$P\in \mathrm{Hom}_{\mathbb{C}}(V,W)$.
Set $V=\mathbb{C}^{n}$ and $\widehat{W}_{i}=V^{\oplus k_{i}}$ for 
 $i=0,\ldots,p$.
Then define
\begin{align*}
	\widehat{Q}_{i}&:=(A^{(i)}_{k_{i}},A^{(i)}_{k_{i}-1},\ldots,A^{(i)}_{1})
	\in \mathrm{Hom}_{\mathbb{C}}(\widehat{W}_{i},V),\\
	\widehat{P}_{i}&:=\begin{pmatrix}0\\\vdots\\0\\
		\mathrm{Id}_{V}
	\end{pmatrix}
	\in \mathrm{Hom}_{\mathbb{C}}(V,\widehat{W}_{i}),\
	\widehat{N}_{i}:=
	\begin{pmatrix}
		0&\mathrm{Id}_{V}&&0\\
		&0&\ddots&\\
		&&\ddots&\mathrm{Id}_{V}\\
		0&&&0
	\end{pmatrix}
	\in \mathrm{End}_{\mathbb{C}}(\widehat{W}_{i}).
\end{align*}
Setting  
\begin{align*}
	\widehat{W}&:=\bigoplus_{i=0}^{p}\widehat{W}_{i},\\
\widehat{T}&:=(\widehat{N}_{i})_{0\le i\le p}\in \bigoplus_{i=0}^{p}
\mathrm{End}_{\mathbb{C}}(\widehat{W}_{i})\subset\mathrm{End}_{\mathbb{C}}
(\widehat{W}),\\
\widehat{Q}&:=(\widehat{Q}_{i})_{0\le i\le p}\in 
\bigoplus_{i=0}^{p}\mathrm{Hom}_{\mathbb{C}}(\widehat{W}_{i},V)
=\mathrm{Hom}_{\mathbb{C}}(\widehat{W},V),\\
\widehat{P}&:=(\widehat{P}_{i})_{0\le i\le p}\in 
\bigoplus_{i=0}^{p}\mathrm{Hom}_{\mathbb{C}}(V,\widehat{W}_{i})
=\mathrm{Hom}_{\mathbb{C}}(V,\widehat{W}),
\end{align*}
we have a 5-tuple $(V,\widehat{W},\widehat{T},\widehat{Q},\widehat{P})$.
Further setting
\[
	\widehat{A}_{i}:=
\begin{pmatrix}
	A^{(i)}_{k_{i}}&A^{(i)}_{k_{i}-1}&\cdots&A^{(i)}_{1}\\
	&A^{(i)}_{k_{i}}&\ddots&\vdots\\
	&&\ddots&A^{(i)}_{k_{i}-1}\\
	0&&&A^{(i)}_{k_{i}}
\end{pmatrix}
\in \mathrm{End}_{\mathbb{C}}(\widehat{W}_{i}),
\]
we define 
$W_{i}:=\widehat{W}_{i}/\mathrm{Ker}\widehat{A}_{i}$ and 
$W:=\bigoplus_{i=0}^{p}W_{i}$.
Then $T,Q,P$ are the maps induced from $\widehat{T},\widehat{Q},
\widehat{P}$ respectively.
\begin{df}\normalfont
	The 5-tuple $(V,W,T,Q,P)$ given above is called the 
	{\em canonical datum} for $\mathbf{A}\in
	\prod_{i=0}^{p}\mathcal{O}_{B^{(i)}}$.
\end{df}

Fix $t\in \{0,\ldots,p\}$, take a polynomial 
$q_{t}(z^{-1})=\sum_{j=1}^{k_{t}}q^{(t)}_{j}z^{-j}\in 
z^{-1}\mathbb{C}[z^{-1}]$ and define an operation, 
called {\em addition} which already appeared in some special cases before. 
For an element $\mathbf{A}=
(A_{i}(z^{-1}))_{0\le i\le p}\in 
\prod_{i=0}^{p}\mathcal{O}_{B^{(i)}}$,
we define $\mathrm{Add}^{(t)}_{q_{t}(z^{-1})}(\mathbf{A})
:=(A'_{i}(z^{-1}))_{0\le i\le p}$
by
\[
	A'_{i}(z^{-1}):=
	\begin{cases}
		A_{i}(z^{-1})&\text{if }i\neq t,\\
		A_{t}(z^{-1})-q_{t}(z^{-1})&
		\text{if }i=t.
	\end{cases}
\]
Then $
\mathrm{Add}^{(t)}_{q_{t}(x^{-1})}(\mathbf{A})\in \prod_{i=0}^{p}\mathcal{O}
_{(B')^{(i)}}$
where 
\[
	(B')^{(i)}:=
	\begin{cases}
		B^{(i)}&\text{if }i\neq t,\\
		B^{(t)}-q_{t}(z^{-1})&\text{if }i=t.
	\end{cases}
\]

Set 
\[\mathcal{J}_{i}:=\{[i,j]\mid j=1,\ldots,m^{(i)}\}\quad\text{ for }
	i=0,\ldots,p
\]
and  
\[
	\mathcal{J}:=\prod_{i=0}^{p}\mathcal{J}_{i}.
\]
Then let us define
\[
	\mathrm{Add}_{\mathbf{i}}:=\prod_{i=0}^{p}
	\mathrm{Add}^{(i)}_{q^{(i)}_{j_{i}}(z^{-1})+\xi^{[i,j_{i}]}_{1}z^{-1}},
\]  
for $\mathbf{i}=([i,j_{i}])_{0\le i\le p}\in \mathcal{J}$.

Suppose that we can 
choose $\mathbf{i}\in \mathcal{J}$ so that $\xi_{\mathbf{i}}:=
\sum_{i=0}^{p}\xi^{[i,j_{i}]}_{1}\neq 0$.
Let $(V,W,T,Q,P)$  be the canonical datum of 
$\mathrm{Add}_{\mathbf{i}}(\mathbf{A})$. 
Following Example 3 in \cite{Y1}, we construct a new 5-tuple
$(V',W,T,Q',P')$ as follows.
Note that $QP=-\xi_{\mathbf{i}}\mathrm{Id}_{V}$.
Thus $Q$ and $P$ are surjective and injective respectively.
Let us set $V'=\mathrm{Coker\,}P$ and $Q'\colon W\rightarrow V'$,
the natural projection. Then we have the split exact sequence
\[
	0\longrightarrow V\stackrel{P}{\longrightarrow}W
	\stackrel{Q'}{\longrightarrow}V'\longrightarrow 0.
\]
Note that $(-\xi_{\mathbf{i}}^{-1}Q)P=\mathrm{Id}_{V}$.
Let $P'\colon V'\rightarrow W$ be the injection such that 
$Q'(\xi_{\mathbf{i}}^{-1}P')=\mathrm{Id}_{V'}$.
Then we have a 5-tuple
$(V',W,T,Q',P')$.

Next we set $Q'_{i}$ (resp. $P'_{i}$) to be the 
$\mathrm{Hom}_{\mathbb{C}}(W_{i},V)$
(resp. $\mathrm{Hom}_{\mathbb{C}}(V,W_{i})$) component of $Q'$ (resp. $P'$).
Also set $N_{i}$ to be the $\mathrm{End}_{\mathbb{C}}(W_{i})$-component
of $T$.
Define 
\[
	(A')^{(i)}_{j}:=Q'_{i}N_{i}^{j-1}P'_{i}
\]
and $\mathbf{A}'=(A'_{i}(z^{-1}))_{0\le i\le p}$ where 
$A'_{i}(z^{-1})=\sum_{j=1}^{k_{i}}(A')^{(i)}_{j}z^{-j}$.
We note that $\sum_{i=0}^{p}(A')^{(i)}_{1}=Q'P'
=\xi_{\mathbf{i}}\mathrm{Id}_{V'}$.

Finally let us set 
\[
	\mathbf{A}'':=\mathrm{Add}_{\mathbf{i}}^{-1}\circ
	\mathrm{Add}^{(0)}_{2\xi_{\mathbf{i}}z^{-1}}(\mathbf{A}').
\]
Then $\mathbf{A}''=(A''_{i}(z^{-1}))_{0\le i\le p}$ satisfies that 
$\sum_{i=0}^{p}\pr_{\text{res}}A''_{i}(z^{-1})=0$.
Let us denote $\mathbf{A}''$ by 
$\mathrm{mc}_{\mathbf{i}}(\mathbf{A})$  and 
call the operator $\mathrm{mc}_{\mathbf{i}}$ the {\em middle convolution}
at $\mathbf{i}$.
We also denote the corresponding connection 
$\nabla'':=d-(\sum_{i=1}^{p}\sum_{\nu=1}^{k_{i}}\frac{(A'')^{(i)_{\nu}}}{(z-a_{i})^{\nu}}-
\sum_{2\le \nu\le k_{0}}(A'')^{(0)}_{\nu}z^{\nu-2})dz$ by 
$\mathrm{mc}_{\mathbf{i}}(\nabla)$.

Let us recall basic properties of middle convolutions.
\begin{prop}[see Yamakawa \cite{Y1}]\label{middleconv}
	Suppose we can choose $\mathbf{i}\in\mathcal{J}$ so that
	$\xi_{\mathbf{i}}\neq 0$.
	\begin{enumerate}
		\item If $\nabla$ is stable, then 
			$\mathrm{mc}_{\mathbf{i}}(\nabla)$ is stable.
		\item If $\nabla$ is stable, 
			\[
				\mathrm{mc}_{\mathbf{i}}\circ
				\mathrm{mc}_{\mathbf{i}}(\mathbf{A})
				\sim \mathbf{A},
			\]
			i.e., there exists $g\in \mathrm{GL}(n,\mathbb{C})$
			 such that $$\mathrm{mc}_{\mathbf{i}}\circ
				\mathrm{mc}_{\mathbf{i}}(\mathbf{A})
				=g\mathbf{A}g^{-1}:=(gA_{i}(z^{-1})g^{-1})_{
				0\le i\le p}.$$

		\item Let us define elements in $M( (n')^{(i)}_{j},\mathbb{C})$
			by
			\[
				(R')^{(i)}_{j}:=
				\begin{cases}
					R^{(i)}_{j}+
				(d_{i}(j,j_{i})+2)
				\xi_{\mathbf{i}}I_{n^{(i)}_{j}}&
				\text{if }i\neq 0,\\	
				R^{(0)}_{j}+
				d_{0}(j,j_{0})
				\xi_{\mathbf{i}}I_{n^{(0)}_{j}}&
				\text{if }i=0
			\end{cases}
			\]
			for all $i\in \{0,\ldots,p\}$ and 
			$j\in\{1,\ldots,m^{(i)}\}\backslash\{j_{i}\}$.
			Here $(n')^{(i)}_{j}:=n^{(i)}_{j}$.
			Further define $(R')^{(i)}_{j_{i}}
			\in M( (n')^{(i)}_{j_{i}},\mathbb{C})$ 
			for $i=1,\ldots,p$ so that 
			equations
			\begin{align*}
				\mathrm{rank\,}
				( (R')^{(i)}_{j_{i}}-\xi^{[i,j_{i}]}_{1})
				\prod_{k=2}^{l}
				( (R')^{(i)}_{j_{i}}-\xi^{[i,j_{i}]}_{k}-
				\xi_{\mathbf{i}})=
				\mathrm{rank\,}
				\prod_{k=1}^{l}
				(R^{(i)}_{j_{i}}-\xi_{k}^{[i,j_{i}]})
			\end{align*}
			hold for all $l=2,\ldots,e_{[i,j_{i}]}$.
			Similarly define  
			$(R')^{(0)}_{j_{0}}
			\in M( (n')^{(0)}_{j_{0}},\mathbb{C})$ 
			so that 
			equations
			\begin{equation*}
				\mathrm{rank\,}
				( (R')^{(0)}_{j_{0}}-\xi^{[0,j_{0}]}_{1}+2
				\xi_{\mathbf{i}})
				\prod_{k=2}^{l}
				( (R')^{(0)}_{j_{0}}-\xi^{[0,j_{0}]}_{k}+
				\xi_{\mathbf{i}})=
				\mathrm{rank\,}
				\prod_{k=1}^{l}
				(R^{(0)}_{j_{0}}-\xi_{k}^{[0,j_{0}]})
			\end{equation*}
			hold for all $l=2,\ldots,e_{[0,j_{0}]}$.

			Here we put 
			\[
				(n')^{(i)}_{j_{i}}:=
				n^{(i)}_{j_{i}}+\mathrm{dim}_{\mathbb{C}}W
				-2n.
			\]
			Finally define 
			\begin{multline*}
				(B')^{(i)}:=\\
				\mathrm{diag}\left(
				q^{(i)}_{1}(z^{-1})I_{(n')^{(i)}_{1}}+
				(R')^{(i)}_{1}z^{-1},\ldots,
				q^{(i)}_{m^{(i)}}(z^{-1})
				I_{(n')^{(i)}_{m^{(i)}}}+(R')^{(i)}_{m^{(i)}}
				z^{-1}
				\right)
			\end{multline*}
			for $i=0,\ldots,p$.
			Then $\mathrm{mc}_{\mathbf{i}}(\mathbf{A})
			\in \prod_{i=0}^{p}\mathcal{O}_{(B')^{(i)}}$.
	\end{enumerate}
\end{prop}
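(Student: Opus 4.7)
The plan is to reduce everything to statements about the 5-tuple $(V,W,T,Q,P)$ and its transformation $(V,W,T,Q,P)\mapsto (V',W,T,Q',P')$, and to exploit the fact that the additions $\mathrm{Add}_{\mathbf{i}}$ and $\mathrm{Add}^{(0)}_{2\xi_{\mathbf{i}}z^{-1}}$ manifestly preserve irreducibility and are trivially invertible, so that all the substantive content concerns the middle step.

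For part (1), I would first observe that adding a scalar multiple of the identity to each residue does not change the lattice of subspaces invariant under the $A^{(i)}_\nu$'s, so irreducibility of $\mathbf{A}$ is equivalent to irreducibility of $\mathrm{Add}_{\mathbf{i}}(\mathbf{A})$. Thus it suffices to show that the passage $(V,W,T,Q,P)\rightsquigarrow (V',W,T,Q',P')$ preserves the irreducibility condition, where irreducibility of the resulting tuple means there is no proper nonzero $T$-stable subspace $U\subset W$ with $P'(V')\subseteq U$ and $Q'|_U=0$ after the usual translation. The key point is that $V'=\mathrm{Coker}\,P$ together with the split exact sequence $0\to V\to W\to V'\to 0$ identifies subrepresentations of $\mathbf{A}''$ with those of $\mathbf{A}$ outside the ``trivial'' part killed by the cokernel construction; this is precisely the categorical feature that makes the middle convolution \emph{middle}.

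For part (2), involutivity $\mathrm{mc}_{\mathbf{i}}\circ \mathrm{mc}_{\mathbf{i}}\sim \mathrm{id}$ reduces, after undoing the additions (which are manifestly self-inverse up to sign conventions), to the involutivity of the 5-tuple operation. Concretely, applying the construction to $(V',W,T,Q',P')$ produces $V''=\mathrm{Coker}\,P'$, and using the identity $Q'P'=\xi_{\mathbf{i}}\mathrm{Id}_{V'}$ one builds the natural identification $V''\cong V$ compatible with $Q,P$ up to the scalar $\xi_{\mathbf{i}}$. The twist $\mathrm{Add}^{(0)}_{2\xi_{\mathbf{i}}z^{-1}}$ in the definition of $\mathrm{mc}_{\mathbf{i}}$ is designed precisely to make these normalizations balance, and a direct (though notationally heavy) check produces the $\mathrm{GL}(n,\mathbb{C})$-conjugation $g$ implementing the equivalence.

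For part (3), the task is purely local: near each singular point $a_i$ one must compute the HTL normal form of the output. The only nontrivial local analysis occurs at $a_{j_i}$, where the convolution acts nontrivially on the residue spectrum; elsewhere, the change is a scalar shift coming from the addition, which accounts for the uniform $(d_i(j,j_i)+2)\xi_{\mathbf{i}}$ (resp.\ $d_0(j,j_0)\xi_{\mathbf{i}}$) terms via the truncation-orbit description of $\mathcal{O}^{\mathrm{tru}}_{B^{(i)}}$. The ranks $\mathrm{rank}\prod_{k=1}^{l}(R^{(i)}_{j_i}-\xi_k^{[i,j_i]})$ are invariants of the level filtration at $a_{j_i}$, and the convolution alters exactly one eigenvalue block: the new block has dimension $n^{(i)}_{j_i}+\dim_{\mathbb{C}}W-2n$, coming from the dimension formula for $V'=\mathrm{Coker}\,P$, and the remaining eigenvalues are shifted by $\xi_{\mathbf{i}}$. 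The main obstacle here is bookkeeping the boundary contribution at $i=0$, where the extra addition $\mathrm{Add}^{(0)}_{2\xi_{\mathbf{i}}z^{-1}}$ produces the asymmetry between the $i\ne 0$ and $i=0$ formulas; once that bookkeeping is done carefully, matching $(R')^{(i)}_{j_i}$ against the stated rank equations is a direct linear-algebra calculation on the Jordan decomposition of $R^{(i)}_{j_i}$.
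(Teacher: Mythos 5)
The paper itself does not prove this proposition; it is imported wholesale from Yamakawa \cite{Y1} (see also the Dettweiler--Reiter formalism it generalizes), so there is no in-paper argument to compare against. Your outline does follow the route of the cited source: peel off the additions, which are invertible and preserve invariant subspaces, and concentrate on the 5-tuple operation $(V,W,T,Q,P)\rightsquigarrow(V',W,T,Q',P')$. That is the correct architecture. But as a proof, each of the three parts rests on an assertion that is precisely where the real work lives, and in two places the gap is substantive rather than merely notational.

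For (1) and (2), the ingredient you never invoke is the quotient $W_{i}=\widehat{W}_{i}/\Ker\widehat{A}_{i}$ in the construction of the canonical datum. The correspondence between invariant subspaces of $V'$ for $\mathbf{A}''$ and invariant subspaces of $V$ for $\mathbf{A}$ is not automatic from the split exact sequence $0\to V\to W\to V'\to 0$; it requires knowing that no nonzero $T$-stable subspace of $W$ sits inside $\Ker Q$ and that no proper $T$-stable subspace contains $\mathrm{Im}\,P$, and these two conditions are exactly what the kernel quotient plus irreducibility of $\mathbf{A}$ buy you. Likewise, in (2) the second application of $\mathrm{mc}_{\mathbf{i}}$ forms a \emph{new} canonical datum from $\mathbf{A}''$, and identifying that datum with $(V',W,T,Q',P')$ (so that its cokernel construction returns $V$) again needs those non-degeneracy statements; your ``direct check'' presupposes them. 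For (3), the sketch conceals the main difficulty: the claim is not just that one eigenvalue block changes size by $\dim_{\mathbb{C}}W-2n$ and the others shift by multiples of $\xi_{\mathbf{i}}$, but that the full formal HTL normal form of $\mathrm{mc}_{\mathbf{i}}(\mathbf{A})$ at each $a_{i}$ — including the irregular parts $q^{(i)}_{j}$, the coefficients $d_{i}(j,j_{i})+2$ versus $d_{0}(j,j_{0})$, and the rank conditions pinning down the Jordan structure of $(R')^{(i)}_{j_{i}}$ — can be read off from the spectral data of $(N_{i},Q'_{i},P'_{i})$. That passage from the canonical datum back to the local normal form is the local Harnad-duality computation occupying a significant portion of \cite{Y1}, not a routine Jordan-form manipulation, and a complete proof would have to reproduce it.
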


\begin{rem}\label{modulitransform}\normalfont
	Let us note that the description of $\mathbf{A}''=
	\mathrm{mc}_{\mathbf{i}}(\mathbf{A})$ depends on the choice
	of the coordinate systems of $W_{i}$ and $V'$
	in the canonical data. Thus $\mathrm{mc}_{\mathbf{i}}$ defines 
	the following well-defined bijection
	\[
		\mathrm{mc}_{\mathbf{i}}\colon
		\mathfrak{M}(\mathbf{B})
		\longrightarrow
		\mathfrak{M}(\mathbf{B}')
	\]
	where $\mathbf{B}'=( (B')^{(i)})_{0\le i\le p}$.
\end{rem}
\subsection{Middle convolutions on  
representations of a quiver}
For a vertex with no edge-loop in $\mathsf{Q}_{0}$, it is known that 
there exists 
a bijection
\[
	s_{a}\colon \mathfrak{M}_{\lambda}(\mathsf{Q},\alpha)\rightarrow 
	\mathfrak{M}_{r_{a}(\lambda)}(\mathsf{Q},s_{a}(\alpha))
\]
if $\lambda\neq 0$, so-called {\em reflection functor} see \cite{CH}
and \cite{Nak2}.
In this section, we shall define 
an analogy of the reflection functors for the subspace 
$\mathfrak{M}_{\lambda}(\mathsf{Q},\alpha)^{\text{dif}}\subset 
\mathfrak{M}_{\lambda}(\mathsf{Q},\alpha)$ by using middle convolutions.
We notice that  
a reflection functor
does not necessarily preserve 
the subset $\mathfrak{M}_{\lambda}(\mathsf{Q},\alpha)^{\text{dif}}$, namely 
it may happen that 
\[s_{a}\left(\mathfrak{M}_{\lambda}(\mathsf{Q},\alpha)^{\text{dif}}\right)
	\not\subset\mathfrak{M}_{r_{a}(\lambda)}(\mathsf{Q},s_{a}(\alpha)
	)^{\text{dif}}
\]
for some $a\in \mathsf{Q}_{0}$.
However 
as we saw in Remark \ref{modulitransform}, a middle convolution 
$\mathrm{mc}_{\mathbf{i}}$ can be seen as a transformation of 
moduli spaces $\mathfrak{M}(\mathbf{B})\cong
\mathfrak{M}_{\lambda}(\mathsf{Q},\alpha)^{\text{dif}}$. 
Thus it can be seen as 
a transformation of quiver varieties $\mathfrak{M}_{\lambda}(\mathsf{Q},
\alpha)^{\text{dif}}$ as below.

For $\mathbf{i}=([i,j_{i}])_{0\le i\le p}
\in \mathcal{J}$, let us define $\epsilon_{\mathbf{i}}
\in \mathbb{Z}^{\mathsf{Q}_{0}}$ by
\[
	(\epsilon_{\mathbf{i}})_{a}:=
	\begin{cases}
		1&\text{if }a=[i,j_{i}],\,i\in I_{\text{irr}},\\
		0&\text{otherwise}.
	\end{cases}
\]
We note that $\epsilon_{\mathbf{i}}$ for $\mathbf{i}\in \mathcal{J}$
are positive real roots of $\mathsf{Q}$.
Let us define 
\[
	s_{\mathbf{i}}(\beta):=\beta-(\beta,\epsilon_{\mathbf{i}})
\epsilon_{\mathbf{i}}
\]
for $\mathbf{i}\in \mathcal{J}$ and $\beta\in \mathbb{Z}^{\mathsf{Q}_{0}}$.
Also define $r_{\mathbf{i}}(\mu)$ for $\mu\in \mathbb{C}^{\mathsf{Q}_{0}}$
by 
\begin{align*}
	r_{\mathbf{i}}(\mu)_{[i,j]}&:=
		\begin{cases}
			\mu_{[i,j]}&\text{if }
			[i,j]\neq [0,j_{0}],\\
			\mu_{[0,j_{0}]}-2\mu_{\mathbf{i}}&
			\text{if }
			[i,j]=[0,j_{0}],
		\end{cases}\\
		r_{\mathbf{i}}(\mu)_{[i,j,k]}&:=
		\begin{cases}
			\mu_{[i,j,k]}&
			\text{if }
			[i,j,k]\neq [i,j_{i},1],\\
			\mu_{[i,j_{i},1]}+\mu_{\mathbf{i}}&
			\text{if }[i,j,k]=[i,j_{i},1].
		\end{cases}
\end{align*}

Then Proposition \ref{middleconv} tells us the following.
\begin{thm}
	Let us consider $\mathfrak{M}(\mathbf{B})\neq \emptyset$ and 
	the corresponding quiver variety $\mathfrak{M}_{\lambda}(
	\mathsf{Q},\alpha)^{\text{dif}}$ under the bijection in 
	Theorem \ref{quiverreal}.
Suppose that we can take $\mathbf{i}=([i,j_{i}])\in \mathcal{J}$ so that 
$\lambda_{\mathbf{i}}:=\sum_{i\in I_{\text{irr}}}\lambda_{[i,j_{i}]}
=-\xi_{\mathbf{i}}\neq 0$.  
Then there exists a bijection
\[
	s_{\mathbf{i}}\colon 
	\mathfrak{M}_{\lambda}(\mathsf{Q},\alpha)^{\text{dif}}
	\longrightarrow
	\mathfrak{M}_{r_{\mathbf{i}}(\lambda')}
	(\mathsf{Q},s_{\mathbf{i}}(\alpha))^{\text{dif}}
\]
\end{thm}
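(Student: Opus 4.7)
The plan is to define $s_{\mathbf{i}}$ as the composition
\[
  s_{\mathbf{i}} := \Phi_{\mathbf{B}'} \circ \mathrm{mc}_{\mathbf{i}} \circ \Phi_{\mathbf{B}}^{-1},
\]
where $\Phi_{\mathbf{B}}$ is the bijection of Theorem \ref{quiverreal} and $\mathrm{mc}_{\mathbf{i}}\colon \mathfrak{M}(\mathbf{B}) \to \mathfrak{M}(\mathbf{B}')$ is the middle convolution regarded as a bijection of moduli spaces as in Remark \ref{modulitransform}. Its bijectivity is immediate from the involutivity $\mathrm{mc}_{\mathbf{i}} \circ \mathrm{mc}_{\mathbf{i}} \sim \mathrm{id}$ in Proposition \ref{middleconv}(2). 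The nontrivial content of the theorem is then to check that, under Theorem \ref{quiverreal} applied to $\mathbf{B}'$, the quiver data attached to $\mathbf{B}'$ equal $(\mathsf{Q}, s_{\mathbf{i}}(\alpha), r_{\mathbf{i}}(\lambda))$.

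I would first unpack Proposition \ref{middleconv}(3). Since the polynomials $q^{(i)}_j(z^{-1})$ are unchanged and the rank equations defining $(R')^{(i)}_{j_i}$ preserve the numbers $e_{[i,j_i]}$ of distinct eigenvalues, the construction of the associated quiver from $\mathbf{B}'$ returns the same $\mathsf{Q}$, so only $\alpha'$ and $\lambda'$ can move. Next, I would check $\alpha' = s_{\mathbf{i}}(\alpha)$. Since $\epsilon_{\mathbf{i}}$ is supported only on $\{[i,j_i] : i \in I_{\text{irr}}\}$, this reduces to two verifications: (a) the block sizes $(n')^{(i)}_j$ for $j \neq j_i$ and the leg dimensions (the ranks of products of $R^{(i)}_j - \xi$) remain unaffected, which follows directly from the rank formulas of Proposition \ref{middleconv}(3); (b) the formula $(n')^{(i)}_{j_i} = n^{(i)}_{j_i} + \dim W - 2n$ reproduces $\alpha_{[i,j_i]} - (\alpha, \epsilon_{\mathbf{i}})$. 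For (b) I would expand $\dim W - 2n = \sum_i \mathrm{rank}\,\widehat{A}_i - 2n$, rewrite each $\mathrm{rank}\,\widehat{A}_i$ in terms of the leg ranks $\alpha_{[i,j,k]}$ and the block sizes $n^{(i)}_j$, and expand $(\alpha, \epsilon_{\mathbf{i}})$ via the Euler form over all arrows of $\mathsf{Q}$ incident to the reflecting vertices; both sides should collapse to the same combinatorial sum.

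For the parameter, I would read the eigenvalue shifts directly off Proposition \ref{middleconv}(3). For $j \neq j_i$ a uniform translate of the spectrum of $R^{(i)}_j$ leaves the differences $\lambda_{[i,j,k]} = \xi^{[i,j]}_k - \xi^{[i,j]}_{k+1}$ invariant; for $j = j_i$ with $i \in I_{\text{irr}} \setminus \{0\}$, only $\lambda_{[i,j_i,1]}$ is shifted, by $-\lambda_{\mathbf{i}}$; and the additional twist by $\mathrm{Add}^{(0)}_{2\xi_{\mathbf{i}} z^{-1}}$ in the construction of $\mathrm{mc}_{\mathbf{i}}$ produces the extra shift of $\lambda_{[0,j_0]}$ by $-2\lambda_{\mathbf{i}}$. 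These modifications match $r_{\mathbf{i}}(\lambda)$ term by term.

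The principal obstacle is step (b) in the dimension-vector check: matching $\dim W - 2n$ with $-(\alpha, \epsilon_{\mathbf{i}})$ requires a careful and systematic enumeration of every arrow of $\mathsf{Q}$ incident to $\{[i,j_i] : i \in I_{\text{irr}}\}$, a bookkeeping of the contributions of the leg ranks to $\mathrm{rank}\,\widehat{A}_i$, and a verification that these combinatorial data are identical. Once this identification is made, the remaining checks are essentially formal.
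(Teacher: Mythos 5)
Your proposal is correct and takes exactly the route the paper does: the paper's entire proof consists of defining $s_{\mathbf{i}}:=\Phi_{\mathbf{B}'}\circ\mathrm{mc}_{\mathbf{i}}\circ\Phi_{\mathbf{B}}^{-1}$ and declaring that the statement ``directly follows from Proposition \ref{middleconv}.'' The verification you outline --- that the quiver is unchanged, that $(n')^{(i)}_{j_i}=n^{(i)}_{j_i}+\dim W-2n$ matches $\alpha_{[i,j_i]}-(\alpha,\epsilon_{\mathbf{i}})$, and that the eigenvalue shifts reproduce $r_{\mathbf{i}}(\lambda)$ --- is precisely the bookkeeping the paper leaves implicit, so you have simply supplied more detail along the same path.
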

\begin{proof}
	We retain the notation in Proposition \ref{middleconv} and 
	put $\mathbf{B}'=( (B')^{(i)})_{0\le 
i\le p}$.
This theorem directly follows from Proposition \ref{middleconv} 
if we define the above map by 
$\Phi_{\mathbf{B}'}\circ\mathrm{mc}_{\mathbf{i}}\circ 
\Phi_{\mathbf{B}}^{-1}$ where $\Phi_{\mathbf{B}}$ and $\Phi_{\mathbf{B}'}$
are in Theorem \ref{quiverreal}. 
\end{proof}

\begin{rem}\normalfont
	For each $[i,j,k]\in \mathsf{Q}_{0}^{\text{leg}}$, 
	the ordinary reflection functor of quiver varieties
gives a bijection
\[
	s_{[i,j,k]}\colon \mathfrak{M}_{\lambda}(\mathsf{Q},\alpha)^{\text{dif}}
	\longrightarrow \mathfrak{M}_{r_{[i,j,k]}(\lambda)}(
	\mathsf{Q},s_{[i,j,k]}(\alpha))^{\text{dif}}
\]
if $\lambda_{[i,j,k]}\neq 0$.
\end{rem}
\begin{rem}\normalfont
	Let $(\mathbf{B}(t))_{t\in\mathbb{T}}$ be an admissible family of HTL normal forms.
	Let us choose $(\mathsf{Q},\lambda,\alpha)$ as in Section \ref{Integrable family}.
	Then Remark \ref{invariance} and Proposition \ref{middleconv} show that 
	for each $\mathbf{i}\in\mathcal{J}$,
	middle convolution $\mathrm{mc}_{\mathbf{i}}$ for $\mathfrak{M}(\mathbf{B}(t))$
	induces the reflection $s_{\mathbf{i}}$ for $\alpha$ and $\lambda$ of 
	$\mathsf{Q}$ independently of $t\in \mathbb{T}$.
\end{rem}
Let us define an analogue of fundamental set of the root lattice $\mathbb{Z}^{
\mathsf{Q}_{0}}$,
\[
	\tilde{F}:=
	\left\{
		\beta\in\mathcal{L}^{+}\backslash\{0\}\ \middle|\ 
		\begin{array}{c}
			(\beta,\epsilon_{a})\le 0\text{ for all }a\in 
			\mathcal{J}\cup \mathsf{Q}_{0}^{\text{leg}}\\
			\text{support of }\beta\text{ is connected}
		\end{array}
	\right\}
\]
called {\em $\mathcal{L}$-fundamental set}.
Then we can see that $\tilde{F}$ can be seen as a fundamental domain under 
the action of the group 
\[
	W^{\mathrm{mc}}:=\left\langle s_{\mathbf{i}},\,
s_{[i,j,k]}\mid \mathbf{i}\in \mathcal{J},\,[i,j,k]\in\mathsf{Q}_{0}
^{\text{leg}}\right\rangle.
\]
\begin{thm}\label{reduction}
	For $\mathfrak{M}_{\lambda}(\mathsf{Q},\alpha)^{\text{dif}}\neq 
	\emptyset$, there exists $w\in W^{\mathrm{mc}}$ such that 
	\[
		w\left(\mathfrak{M}_{\lambda}(\mathsf{Q},
		\alpha)^{\text{dif}}\right)=\mathfrak{M}_{\lambda'}(
		\mathsf{Q},\alpha')^{\text{dif}}
	\]
	with 
	\[
		\begin{cases}
		\alpha'\in \tilde{F}&\text{if }q(\alpha)\le 0,\\
			\alpha'=\epsilon_{\mathbf{i}}\text{ for some }
			\mathbf{i}\in \mathcal{J}&\text{otherwise. }
		\end{cases}
	\]
\end{thm}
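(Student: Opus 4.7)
The plan is a Kac-style reduction argument by infinite descent on the height function $\mathrm{ht}(\beta):=\sum_{a\in \mathsf{Q}_0}\beta_a$, using the reflections in $W^{\mathrm{mc}}$ to successively decrease the dimension vector of the ambient quiver variety.

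First I would record the invariance properties I need: the reflections $s_{\mathbf{i}}$ for $\mathbf{i}\in\mathcal{J}$ and $s_{[i,j,k]}$ for $[i,j,k]\in\mathsf{Q}_0^{\text{leg}}$ preserve the quadratic form $q$ (since the $W$-action preserves $(\,,\,)$), they preserve the sublattice $\mathcal{L}$ (since the reflection vectors $\epsilon_{\mathbf{i}}$ and $\epsilon_{[i,j,k]}$ themselves lie in $\mathcal{L}$), and by the two theorems immediately preceding this one each reflection gives a bijection between the relevant $\mathfrak{M}^{\text{dif}}$ spaces whenever the corresponding $\lambda$-coordinate is nonzero. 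In particular non-emptiness is preserved along any such descent.

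The core inductive step I would set up is as follows: if $\alpha\notin\tilde F$ then there exists some $a\in\mathcal{J}\cup\mathsf{Q}_0^{\text{leg}}$ with $(\alpha,\epsilon_a)>0$ whose associated reflection is well-defined. Connectedness of the support of $\alpha$ comes for free, since $\mathfrak{M}_\lambda(\mathsf{Q},\alpha)^{\text{dif}}\neq\emptyset$ together with the $\mathcal{L}$-irreducibility condition rules out a disconnected support. From $\lambda\cdot\alpha=0$ (given by $\alpha\in\Sigma_{\lambda}^{\text{dif}}$ via Theorem \ref{DSproblem}) together with the explicit description of $\lambda$, I would argue that among the vertices $a$ with $(\alpha,\epsilon_a)>0$ at least one satisfies the $\lambda$-nonvanishing hypothesis required by the relevant reflection. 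Applying $s_a$ yields $\mathrm{ht}(s_a(\alpha))=\mathrm{ht}(\alpha)-(\alpha,\epsilon_a)\mathrm{ht}(\epsilon_a)<\mathrm{ht}(\alpha)$, and since heights are positive integers the descent terminates in finitely many steps.

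At termination the resulting $\alpha'$ lies in $\mathcal{L}^+$, has connected support, and satisfies $(\alpha',\epsilon_a)\leq 0$ for all $a\in\mathcal{J}\cup\mathsf{Q}_0^{\text{leg}}$, while $q(\alpha')=q(\alpha)$ since reflections preserve $q$. In the case $q(\alpha)\leq 0$ this is exactly the definition of $\alpha'\in\tilde F$, completing that case. The main obstacle is the case $q(\alpha)>0$: one must show that a positive $\mathcal{L}$-vector with positive $q$ that is minimal under the $W^{\mathrm{mc}}$-descent is necessarily of the form $\epsilon_{\mathbf{i}}$ for some $\mathbf{i}\in\mathcal{J}$. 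I would dispose of this by a combinatorial analysis of $\mathcal{L}$ and $\mathsf{Q}$: verify that the $\epsilon_{\mathbf{i}}$ are the unique positive $\mathcal{L}$-vectors with $q=1$ that are supported at exactly one vertex of each group $\mathcal{J}_i$ for $i\in I_{\text{irr}}$ and zero at every leg vertex, and rule out heavier minimal candidates by using the defining linear relations of $\mathcal{L}$ together with the requirement that no further reflection can reduce the height.
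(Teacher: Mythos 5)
The paper itself does not argue this theorem: it simply cites Lemma 7.2 and Theorems 7.9--7.10 of \cite{H2}. Your Kac-style descent on the height is certainly the intended strategy, and most of your skeleton (reflections preserve $q$ and $\mathcal{L}$, the moduli bijections propagate non-emptiness, termination by descent, the terminal vector satisfies $(\alpha',\epsilon_a)\le 0$ for all $a$ hence lies in $\tilde F$ when $q\le 0$) is sound. But two steps are genuinely underpowered as written.

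First, your justification that some vertex $a$ with $(\alpha,\epsilon_a)>0$ has a well-defined reflection rests on ``$\lambda\cdot\alpha=0$ together with the explicit description of $\lambda$.'' That cannot work: the parameters $\xi^{[i,j]}_k$ are arbitrary, so nothing about the explicit form of $\lambda$ prevents $\lambda_{\mathbf{i}}=0$ at a vertex where $(\alpha,\epsilon_{\mathbf{i}})>0$. The correct mechanism is clause (2) in the definition of $\Sigma_{\lambda}^{\text{dif}}$: if $(\alpha,\epsilon_a)=m>0$, $\lambda\cdot\epsilon_a=0$ and $\alpha\ne\epsilon_a$, then $\alpha=s_a(\alpha)+m\epsilon_a$ is a decomposition into positive roots orthogonal to $\lambda$ with $p(\alpha)=p(s_a(\alpha))+m\,p(\epsilon_a)$, contradicting $\alpha\in\Sigma_{\lambda}^{\text{dif}}$ (Theorem \ref{DSproblem}). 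Second, and relatedly, this argument and your height descent both presuppose that $s_{\mathbf{i}}(\alpha)$ stays in $\mathcal{L}^{+}$ (indeed stays a \emph{positive} root). For the leg reflections this is the usual simple-root argument, but $\epsilon_{\mathbf{i}}$ is \emph{not} a simple root of $\mathbb{Z}^{\mathsf{Q}_0}$ when $\#I_{\text{irr}}>1$ --- it is a sum of several $\epsilon_{[i,j_i]}$ --- and reflections in non-simple real roots do not in general preserve positivity. This is exactly the difficulty the lifted Kac--Moody lattice $\widehat{\mathcal{L}}$ and the isometric, $W^{\mathrm{mc}}$-equivariant projection $\Xi$ of Proposition \ref{quotientmap} are designed to remove: one runs the Kac descent upstairs in $\widehat{\mathcal{L}}$, where every $c_a$ \emph{is} simple, and pushes down. (Alternatively one can extract positivity from the fact that each applicable $s_{\mathbf{i}}$ is realized by $\mathrm{mc}_{\mathbf{i}}$, so the target moduli space is non-empty and its dimension vector lies in $\Sigma^{\text{dif}}_{r_{\mathbf{i}}(\lambda)}\subset\mathcal{L}^{+}$; but then you must say this.) Finally, in the terminal case $q(\alpha)>0$ your combinatorial claim should also rule out termination at a leg simple root $\epsilon_{[i,j,k]}$; the quick way is that non-emptiness is preserved at every step and a connection of rank $\sum_j\alpha'_{[0,j]}=0$ does not exist, which forces the terminal simple root to lie in $\mathcal{J}$. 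None of these gaps is fatal, but each requires an argument you have not supplied.
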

\begin{proof}
	See Lemma 7.2, Theorem 7.9 and Theorem 7.10 in \cite{H2}.
\end{proof}

We introduce a condition for $\lambda$ which will be used in the latter section.
\begin{df}\normalfont
	For $\mathfrak{M}_{\lambda}(\mathsf{Q},\alpha)^{\text{dif}}\neq 
	\emptyset$, we say $\lambda$ is {\em fractional} if 
	\[
		\lambda'_{\mathbf{i}}:=
		\sum_{i\in I_{\text{irr}}}\lambda'_{[i,j_{i}]}\notin\mathbb{Z}
	\]
	for all $\mathbf{i}\in \mathcal{J}$ and $\lambda'\in 
	\{r(\lambda)\mid r\in \langle r_{[i,j,k]}\mid [i,j,k]\in 
	\mathsf{Q}_{0}^{\text{leg}}\rangle\}.$
	
	Moreover if there exists a sequence $a_{1},a_{2},\ldots,a_{l}\in 
	\mathcal{J}\cup \mathsf{Q}_{0}^{\text{leg}}$ such that 
	\[
		r_{a_k}\circ r_{a_{k-1}}\circ\cdots \circ r_{a_1}(\lambda)
		\text{ are fractional}
	\]
	for all $k=1,\ldots,l$ and $w=s_{a_{l}}s_{a_{l-1}}\cdots s_{1}$
	where $w\in W^{\mathrm{mc}}$ is chosen as in Theorem \ref{reduction},
	then we say that $\lambda$ has a {\em fractional reduction}.
\end{df}
\begin{rem}\normalfont
	Let $\mathbf{B}=(B^{(i)})_{0\le i\le p}$ be a collection
	 of HTL normal forms
	 \[
    B^{(i)}=
    \mathrm{diag}\left(
    q^{(i)}_{1}(z^{-1})I_{n^{(i)}_{1}}+R^{(i)}_{1}z^{-1},\ldots,
    q^{(i)}_{m^{(i)}}(z^{-1})I_{n^{(i)}_{m^{(i)}}}+R^{(i)}_{m^{(i)}}z^{-1}
    \right)
\]
such that $\mathfrak{M}(\mathbf{B})\neq \emptyset$.
Then $\lambda$ of $\mathfrak{M}(\mathbf{B})\cong \mathfrak{M}_{\lambda}(
\mathsf{Q},\alpha)^{\text{dif}}$ is fractional if and only if 
$\sum_{i=0}^{p}\xi_{i}\not\in \mathbb{Z}$ where $\xi_{i}$ is an 
arbitrary eigenvalue 
of $\mathrm{pr}_{\text{res}}(B^{(i)})$ for each $i=0,\ldots,p$.

\end{rem}
\subsection{The lattice $\mathcal{L}$ as a Kac-Moody root lattice}
As we saw in Theorem \ref{DSproblem}, 
if $\mathfrak{M}(\mathbf{B})\cong 
\mathfrak{M}_\lambda(\mathsf{Q},\alpha)^{\text{dif}}\neq \emptyset$,
then $\alpha$ must be in 
$\mathcal{L}\cap \Delta$ where $\Delta$ is the set of 
roots in $\mathbb{Z}^{\mathsf{Q}_{0}}$.
This inclines us to see $\mathcal{L}\cap \Delta$
as an analogy of the set of roots of the lattice 
$\mathcal{L}$ which may not be a true Kac-Moody root lattice.

\begin{df}[symmetric Kac-Moody root lattice]\normalfont
We call a $\mathbb{Z}$-lattice 
$L:=\bigoplus_{i\in I}\mathbb{Z}\alpha_{i}$ with a finite index set $I$
a {\em symmetric Kac-Moody root lattice}, when $L$ has the following 
bilinear form 
\begin{align*}
	(\alpha_{i},\alpha_{i})&=2\quad\quad (i\in I),\\
	(\alpha_{i},\alpha_{j})&=(\alpha_{j},\alpha_{i})\in 
	\mathbb{Z}_{\le 0}\quad\quad (i,j\in I,\ i\neq j).
\end{align*}
For each $\alpha_{i},\ (i\in I)$ which is 
called a {\em simple root}, we can define the {\em simple reflection} by 
\[
	s_{\alpha_{i}}(\beta):=\beta-(\beta,\alpha_{i})\alpha_{i}
\]
for $\beta\in L$. The {\em Weyl group} $W\in \mathrm{Aut}_{\mathbb{Z}}(L)$
is the group generated by all simple reflections $s_{\alpha_{i}}$, $i\in I$.

We can attach $L$ to a diagram, called the {\em Dynkin diagram},
regarding simple roots  as vertices and 
connecting $\alpha_{i},\alpha_{j}$ by $|(\alpha_{i},\alpha_{j})|$ 
edges if $i\neq j$. 

\end{df}
Notions of real roots, fundamental set and imaginary roots and so on
are also defined in the same way as   
we saw in \S \ref{some geometry}.
Then for our quiver $\mathsf{Q}$, the $\mathbb{Z}^{\mathsf{Q}_{0}}$ is 
a symmetric Kac-Moody root lattice.

It can be checked that $\mathcal{L}$ is generated by $\{\epsilon_{a}\mid 
a\in \mathcal{J}\cup\mathsf{Q}_{0}^{\text{leg}}\}$ over $\mathbb{Z}$ and 
$W^{\mathrm{mc}}=\langle s_{a}\mid a\in 
\mathcal{J}\cup\mathsf{Q}_{0}^{\text{leg}}\rangle$ acts on $\mathcal{L}$.
This may lead us to believe that $\mathcal{L}$ can be seen as a root 
lattice with the set of simple roots $\{\epsilon_{a}\mid 
a\in \mathcal{J}\cup\mathsf{Q}_{0}^{\text{leg}}\}$ and the Weyl group
$W^{\text{mc}}$.
However elements in $\{\epsilon_{a}\mid 
a\in \mathcal{J}\cup\mathsf{Q}_{0}^{\text{leg}}\}$ are not independent 
over $\mathbb{Z}$ in general.
Thus we shall introduce a new lattice $\widehat{\mathcal{L}}$ of which 
$\mathcal{L}$ can be seen as a quotient.
Let us note that 
\begin{align}
	(\epsilon_{\mathbf{i}},\epsilon_{\mathbf{i}'})&=
	2-\sum_{\substack{0\le i\le p\\j_{i}\neq j'_{i}}}
	(d_{i}(j_{i},j'_{i})+2),\label{equ5}\\
	(\epsilon_{\mathbf{i}},\epsilon_{[i,j,k]})&=
	\begin{cases}
		-1&\text{if }j=j_{i}\text{ and }k=1,\label{equ6}\\
		0&\text{otherwise},
	\end{cases}\\
	(\epsilon_{[i,j,k]},\epsilon_{[i',j',k']})&=
	\begin{cases}
		2&\text{if }[i,j,k]=[i',j',k'],\\
		-1&\text{if }(i,j)=(i',j')\text{ and }
		|k-k'|=1,\\
		0&\text{otherwise}
	\end{cases}\label{equ7}
\end{align}
for $\mathbf{i},\mathbf{i}'\in 
\mathcal{J}$ and $[i,j,k],[i',j',k']
\in \mathsf{Q}_{0}^{\text{leg}}$.
Thus we consider a new lattice $\widehat{\mathcal{L}}$
generated by the set of indeterminate
\[
	\mathcal{C}=\left\{c_{a}\mid a\in \mathcal{J}\cup
	\mathsf{Q}_{0}^{\text{leg}}\right\},
\]
and define a symmetric bilinear form $(\,,\,)$ on $\widehat{\mathcal{L}}$
in accordance with equations $(\ref{equ5}),(\ref{equ6})$ and 
$(\ref{equ7})$.
Then $\widehat{\mathcal{L}}$ becomes a symmetric Kac-Moody root lattice and
we have a projection
\[
	\Xi\colon \widehat{\mathcal{L}}\longrightarrow
	\mathcal{L}
\]	
where for $\gamma=\sum_{c\in\mathcal{C}}\gamma_{c}c\in \widehat{\mathcal{L}}$,
the image $\Xi(\gamma)=(\beta_{a})_{a\in \mathsf{Q}_{0}}$ is given by
\begin{align*}
	\beta_{[i,j]}&=\sum_{\left\{\mathbf{i}=([i,j_{i}])\in \mathcal{J}\mid 
	j_{i}=j\right\}}\gamma_{c_{\mathbf{i}}},\\
	\beta_{[i,j,k]}&=\gamma_{c_{[i,j,k]}}.
\end{align*}
\begin{prop}[Theorem 3.6 in \cite{H}]\label{quotientmap}
	We have the following.
	\begin{enumerate}
		\item The map $\Xi$ is an isometry, that is, 
			$(\gamma,\gamma')=(\Xi(\gamma),\Xi(\gamma'))$
			for any $\gamma,\gamma'\in \widehat{\mathcal{L}}$.
		\item The map $\Xi$ is injective   
			if and only if 
			\[
				\#\{i\in \{0,\ldots,p\}\mid 
				m^{(i)}>1,\,i=0,\ldots,p\}\le 1.
			\]
		\item The map $\Xi$ is $W^{\text{mc}}$-equivariant, that is, 
			for $\gamma\in \widehat{\mathcal{L}}$ and $a\in \mathcal{J}
			\cup \mathsf{Q}_{0}^{\text{leg}}$, we have 
			\[
				\Xi(s_{a}(\gamma))=s_{a}(\Xi(\gamma)).
			\]
	\end{enumerate}
\end{prop}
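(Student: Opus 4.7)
The plan is to exploit the identification $\Xi(c_{a})=\epsilon_{a}$, valid for every generator $a\in\mathcal{J}\cup\mathsf{Q}_{0}^{\text{leg}}$ of $\widehat{\mathcal{L}}$: for $\mathbf{i}=([i,j_{i}])\in\mathcal{J}$ the recipe for $\Xi(c_{\mathbf{i}})_{[i,j]}$ gives $1$ exactly when $j=j_{i}$ and $i\in I_{\text{irr}}$, matching the definition of $\epsilon_{\mathbf{i}}$, while $\Xi(c_{[i,j,k]})=\epsilon_{[i,j,k]}$ by direct inspection. With this observation in hand, (1) and (3) become immediate formalities, and the real content of the statement sits in (2).

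For (1) I would note that the symmetric bilinear form on $\widehat{\mathcal{L}}$ was defined via the prescriptions (\ref{equ5})--(\ref{equ7}) precisely so that on the generating set $\mathcal{C}$ it reproduces the values of the symmetrized Euler form on $\mathbb{Z}^{\mathsf{Q}_{0}}$ taken on $\{\epsilon_{a}\}$. Thus $(c_{a},c_{b})=(\epsilon_{a},\epsilon_{b})=(\Xi(c_{a}),\Xi(c_{b}))$ holds on generators, and $\mathbb{Z}$-bilinearity promotes it to arbitrary $\gamma,\gamma'\in\widehat{\mathcal{L}}$.

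For (2) the legs decouple from the $\mathcal{J}$-part: $\Xi(c_{[i,j,k]})=\epsilon_{[i,j,k]}$ is supported in $\mathsf{Q}_{0}^{\text{leg}}$ whereas $\Xi(c_{\mathbf{i}})=\epsilon_{\mathbf{i}}$ is supported in $\mathsf{Q}_{0}^{\text{irr}}$, and the $\epsilon_{[i,j,k]}$ are manifestly independent. So injectivity of $\Xi$ reduces to linear independence of $\{\epsilon_{\mathbf{i}}\}_{\mathbf{i}\in\mathcal{J}}$. Set $I^{*}:=\{i\in I_{\text{irr}}\mid m^{(i)}>1\}$; for $i\in I_{\text{irr}}\setminus I^{*}$ the coordinate $j_{i}=1$ is forced, contributing a constant $1$ at $[i,1]$ to every $\epsilon_{\mathbf{i}}$, so the effective parameter space is $\prod_{i\in I^{*}}\{1,\dots,m^{(i)}\}$. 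If $|I^{*}|\le 1$, the assignment $\mathbf{i}\mapsto\epsilon_{\mathbf{i}}$ is (up to a constant shift) a standard basis of $\mathbb{Z}^{m^{(i^{*})}}$, hence independent. If $|I^{*}|\ge 2$, I would pick distinct $i_{1},i_{2}\in I^{*}$, distinct $j_{a}\neq j_{a}'$ in $\{1,\dots,m^{(i_{a})}\}$ for $a=1,2$, fix arbitrary $j_{i}$ for the remaining $i\in I_{\text{irr}}$, and assemble the four elements $\mathbf{i}_{ab}\in\mathcal{J}$ ($a,b\in\{1,2\}$) determined by the corresponding $(j_{i_{1}},j_{i_{2}})$-choices. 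A direct vertex-by-vertex check shows that at each of $[i_{1},j_{1}],[i_{1},j_{1}'],[i_{2},j_{2}],[i_{2},j_{2}']$ the four $\pm 1$ contributions cancel pairwise, while at every $[i,j_{i}]$ with $i\neq i_{1},i_{2}$ they are constant and still sum to $0$; hence
\[
c_{\mathbf{i}_{11}}+c_{\mathbf{i}_{22}}-c_{\mathbf{i}_{12}}-c_{\mathbf{i}_{21}}\in\ker\Xi
\]
is a nonzero kernel element (the $\mathbf{i}_{ab}$ being pairwise distinct). This index-bookkeeping is the only genuinely nontrivial step of the proof, and the main obstacle to guard against is miscounting the constant contributions at the remaining indices $i\in I_{\text{irr}}\setminus\{i_{1},i_{2}\}$.

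Finally (3) follows from (1) by a one-line computation: for any $a\in\mathcal{J}\cup\mathsf{Q}_{0}^{\text{leg}}$ and $\gamma\in\widehat{\mathcal{L}}$,
\[
\Xi(s_{a}(\gamma))=\Xi(\gamma)-(\gamma,c_{a})\,\Xi(c_{a})=\Xi(\gamma)-(\Xi(\gamma),\epsilon_{a})\,\epsilon_{a}=s_{a}(\Xi(\gamma)),
\]
where the middle equality combines $\mathbb{Z}$-linearity of $\Xi$ with $\Xi(c_{a})=\epsilon_{a}$ and the isometry established in (1).
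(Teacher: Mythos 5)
Your proof is correct, and it is the natural argument. The paper itself does not prove this proposition --- it simply cites Theorem 3.6 of \cite{H} --- so there is no internal proof to compare against; but your route (observe $\Xi(c_{a})=\epsilon_{a}$ on generators, note that the form on $\widehat{\mathcal{L}}$ was defined by $(\ref{equ5})$--$(\ref{equ7})$ precisely to match the symmetrized Euler form on the $\epsilon_{a}$, and deduce (1) and (3) by bilinearity) is exactly what the construction is set up for. Your treatment of (2) is also sound: the leg coordinates are read off directly, so the question reduces to independence of $\{\epsilon_{\mathbf{i}}\}_{\mathbf{i}\in\mathcal{J}}$, which holds when at most one $m^{(i)}>1$ (the common summand over $I_{\text{irr}}\setminus\{i^{*}\}$ does not spoil independence of the standard vectors $\epsilon_{[i^{*},j]}$), and fails otherwise because the ``rectangle'' element $c_{\mathbf{i}_{11}}+c_{\mathbf{i}_{22}}-c_{\mathbf{i}_{12}}-c_{\mathbf{i}_{21}}$ is a nonzero kernel element --- your vertex-by-vertex cancellation check is accurate.
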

This proposition tells us that
$\widehat{\mathcal{L}}$ is a ``lift'' of 
$\mathcal{L}$
to a Kac-Moody root lattice with 
the Weyl group $W^{\mathrm{mc}}$.

The kernel of $\Xi$ is a big space in general.
Thus if we consider the inverse image of an element $\beta\in \mathcal{L}$,
it is convenient to restrict $\Xi$ to some smaller space as follows.
Fix $\beta\in \mathcal{L}$ and 
set  $\mathcal{J}_{\beta}:=\{([i,j_{i}])\in \mathcal{J}\mid 
\beta_{[i,j_{i}]}\neq 0\text{ for all }i\in I_{\text{irr}}\}$
and $(\mathsf{Q}_{0}^{\text{leg}})_{\beta}:=\mathsf{Q}_{0}^{\text{leg}}\cap
\supp(\beta)$. Then define
\[
	(\mathcal{J}\cup
\mathsf{Q}_{0}^{\text{leg}})_{\beta}\\
:=\mathcal{J}_{\beta}\cup (\mathsf{Q}_{0}^{\text{leg}})_{\beta}
\]
and 
a sublattice and subgroup
\begin{align*}
	\widehat{\mathcal{L}}_{\beta}&:=
\sum_{\{a\in (\mathcal{J}\cup
		\mathsf{Q}_{0}^{\text{leg}})_{\beta}\}}\mathbb{Z}c_{a},\\
		W_{\beta}^{\mathrm{mc}}&:=\langle s_{a}\mid a\in (\mathcal{J}\cup
		\mathsf{Q}_{0}^{\text{leg}})_{\beta}\rangle.
\end{align*}
Denote the set of all positive elements in $\widehat{\mathcal{L}}_{\beta}$
by $\widehat{\mathcal{L}}^{+}_{\beta}$.
We write the restriction of $\Xi$ on $\widehat{\mathcal{L}}_{\beta}$
by $\Xi_{\beta}$.

\subsubsection{Finiteness of spectral types}
As we saw in Theorem \ref{reduction},
quiver varieties $\mathfrak{M}_{\lambda}
(\mathsf{Q},\alpha)^{\text{dif}}$ 
with $\alpha\in \tilde{F}$ are 
fundamental elements under the action
of $W^{\mathrm{mc}}$.
We shall see that a kind of finiteness 
of the set $\tilde{F}$.
First let us introduce the shape
of $\beta\in \mathcal{L}$.
\begin{df}[shape]\normalfont 
	Fix a Kac-Moody root lattice $L=\bigoplus_{i\in I}\mathbb{Z}\alpha_{i}$ 
	and $\alpha=\sum_{i\in I}m_{i}\alpha_{i}\in L$.
For the Dynkin diagram of the support of $\alpha$, we attach each coefficient $m_{i}$ of $\alpha$ to the vertex corresponding 
to $\alpha_{i}$, then  we obtain the diagram with the coefficients, which
we call the {\em shape} of $\alpha$.
\end{df}
For example, if $\alpha=m_{1}\alpha_{i_1}+m_{2}\alpha_{i_2}
+m_{3}\alpha_{i_3}\in L$ with the diagram of the support
$
\begin{xy}
\ar@{-} *++!U{\alpha_{i_1}}*\cir<4pt>{};(7,0) *++!U{\alpha_{i_2}}*\cir<4pt>{}="A",
\ar@{-} "A"; (14,0) *++!U{\alpha_{i_3}}*\cir<4pt>{}
\end{xy}$,
the diagram with coefficients is 
$
\begin{xy}
    \ar@{-} *++!D{m_{1}}*++!U{\alpha_{i_1}}*\cir<4pt>{}
      ;(7,0) *++!D{m_{2}}*++!U{\alpha_{i_2}}*\cir<4pt>{}="A",
    \ar@{-} "A"; (14,0) *++!D{m_{3}}*++!U{\alpha_{i_3}}*\cir<4pt>{}
\end{xy}
$.

By using this we define  shapes of 
elements in $\mathcal{L}$ as follows.
\begin{df}\normalfont
	For $\beta\in \mathcal{L}$,
	the {\em shape} of $\beta$ is 
	the set of shapes of 
	elements in $\Xi_{\beta}^{-1}(\beta)\subset \widehat{\mathcal{L}}_{\beta}$.
\end{df}
\begin{exa}\label{exmp}\normalfont
	For example, suppose $p=1$, $m^{(0)}=m^{(1)}=2$, $e_{[i,j]}=1\ (i=0,1$ and $j=1,2)$, $d_0(1,2)=d_1(1,2)=0$.
	Consider $\beta=\epsilon_{[0,1]}+\epsilon_{[0,2]}+\epsilon_{[1,1]}+
	\epsilon_{[1,2]}$.
Then the shape of $\beta$ is 
\begin{equation*}
\begin{xy}
    \ar@{=}(0,-4)*+!R+!D{1-a}*\cir<4pt>{};(7,3) *++!D{1-a}*\cir<4pt>{},
    \ar@{=}(0,3)*+!R+!D{a}*\cir<4pt>{};(7,-4) *+!L+!D{a}*\cir<4pt>{}
\end{xy}
\quad(a\in \mathbb{Z}),
\end{equation*}
where we simply denote $\{x_{a}\mid a\in \mathbb{Z}\}$ by $x_{a}\ (a\in \mathbb{Z})$. 

Suppose $p=0$, $m^{(0)}=4$, $d_0(i,j)=1$ for $1\le i<j\le 4$ and $e_{0,\nu}=1$
for $1\le\nu\le 4$.  If $m_{0,j,1}=1$ for $1\le j\le 4$,
the shape of $\beta=\sum_{\nu=1}^{4}\epsilon_{[0,\nu]}$ is  
\begin{equation*}
\begin{xy}
    \ar@{-}(0,-4)*++!R{1}*\cir<4pt>{}="A";(7,3) *++!L{1}*\cir<4pt>{}="B",
    \ar@{-}(0,3)*++!R{1}*\cir<4pt>{}="C";(7,-4) *++!L{1}*\cir<4pt>{}="D",
    \ar@{-} "A";"C",
    \ar@{-} "A";"D",
    \ar@{-} "B";"C",
    \ar@{-} "B";"D"
\end{xy}.
\end{equation*}
\end{exa}

We say that $\beta\in \mathbb{Z}_{\ge 0}^{\mathsf{Q}_{0}}$ is \textit{reduced} if 
it never happens that there 
exists $i\in \{1,\ldots,p\}$ such that 
$\#\{j\mid \beta_{[i,j]}\neq 0\}=1$ and 
$e_{[i,j_{i}]}=1$ where $j_{i}\in \{j\mid \beta_{[i,j]}\neq 0\}$.
\begin{rem}\normalfont
	If $\mathfrak{M}(\mathbf{B})\cong \mathfrak{M}_{\lambda}(\mathsf{Q},
	\alpha)^{\text{dif}}\neq \emptyset$, then we may assume $\alpha$ is 
	reduced. For if there exists $i\in \{1,\ldots,p\}$ such that 
	$\{j_{i}\}=
\{j\mid \beta_{[i,j]}\neq 0\}$ and 
$e_{[i,j_{i}]}=1$, then the HTL normal form $B^{(i)}$ is a scalar matrix,
i.e., $B^{(i)}=b^{(i)}(z^{-1})I_{n}$, $b^{(i)}(z)\in \mathbb{C}[z]$.
Thus applying $\mathrm{Add}^{(i)}_{b^{(i)}(z^{-1})}$, the singular point $a_{i}$
becomes regular.
\end{rem}

Let us consider the set of all nonempty moduli 
spaces $\mathfrak{M}(\mathbf{B})$. 
Set 
\begin{align*}
	\mathrm{Ht}^{(n)}&:=\left\{(B_{i})\in \bigoplus^{\infty}
		M(n,\mathbb{C}[z^{-1}])\ \middle|\ 
	\text{ all }B_{i}\text{ are HTL normal forms}\right\}\\
	\mathrm{Ht}&:=\bigcup_{n=1}^{\infty}\mathrm{Ht}^{(n)}.
\end{align*}
\begin{df}[fundamental spectral type]\normalfont
	Let $\mathbf{m}$ be a spectral type.
	We say that $\mathbf{m}$ is {\em effective}
	if there exists $\mathbf{B}\in \mathrm{Ht}$ such that 
	$\mathfrak{M}(\mathbf{B})\cong 
	\mathfrak{M}_{\lambda}(\mathsf{Q},\alpha)^{\text{dif}}\neq 
	\emptyset$ and $\mathbf{m}=\mathbf{m}_{\alpha}$.
	A spectral type $\mathbf{m}=\mathbf{m}_{\alpha}$
	is said to be  
	{\em basic} if $\alpha\in \tilde{F}$.
	Also we say that $\mathbf{m}$ is {\em reduced} 
	if $\alpha$ is reduced.
	We say that $\mathbf{m}$ is {\em fundamental} if 
	$\mathbf{m}$ is effective, basic and reduced.
	By the {\em shape} of $\mathbf{m}$, we mean the shape of $\alpha$.
\end{df}
Then we can show the following finiteness of basic spectral types.
\begin{thm}[Theorem 8 in \cite{HO}]\label{finitespec}
	Let us fix an integer $q\in 2\mathbb{Z}_{\le 0}$. Then 
	there exist only finite number of  fundamental  
	spectral types $\mathbf{m}$ satisfying
	$\mathrm{idx}\mathbf{m}=q$.  
\end{thm}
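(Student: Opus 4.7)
The plan is to lift $\alpha$ from $\mathcal{L}$ to the symmetric Kac--Moody root lattice $\widehat{\mathcal{L}}_\alpha$, where the fundamental-set inequalities combine with the quadratic constraint $(\alpha,\alpha)=q$ to bound both the Dynkin diagram of the support and the coefficients simultaneously. Transporting those bounds back via $\Xi_\alpha$ will then yield the desired finiteness of the shape of $\alpha$, and hence of $\mathbf{m}$.

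Given a fundamental spectral type $\mathbf{m}=\mathbf{m}_\alpha$ with $\alpha\in\tilde F$ and $\mathrm{idx}\,\mathbf{m}=(\alpha,\alpha)=q$, I would first invoke Proposition \ref{quotientmap} to select $\widehat{\alpha}\in\widehat{\mathcal{L}}^+_\alpha$ with $\Xi_\alpha(\widehat{\alpha})=\alpha$. Part (1) of that proposition gives $(\widehat{\alpha},\widehat{\alpha})=q$; since $\Xi$ sends $c_a$ to $\epsilon_a$ for each $a\in\mathcal{J}\cup\mathsf{Q}_0^{\text{leg}}$, the inequalities $(\alpha,\epsilon_a)\le 0$ defining $\tilde F$ transfer to $(\widehat{\alpha},c_a)\le 0$ by isometry, and connectedness of the support transfers likewise. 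Thus $\widehat{\alpha}$ lies in the fundamental set of the Kac--Moody root lattice $\widehat{\mathcal{L}}_\alpha$ with prescribed norm $q\le 0$.

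The core step is the elementary but crucial bilinear identity
\[
q\;=\;(\widehat{\alpha},\widehat{\alpha})\;=\;\sum_{a}n_a\,(\widehat{\alpha},c_a),\qquad \widehat{\alpha}=\sum_{a}n_a c_a,
\]
in which every summand is non-positive. Hence $0\le -n_a(\widehat{\alpha},c_a)\le |q|$ for every $a\in\supp(\widehat{\alpha})$. Expanding $(\widehat{\alpha},c_a)=2n_a-\sum_{b\ne a}|(c_a,c_b)|n_b$ and recalling from (\ref{equ5})--(\ref{equ7}) that the multiplicities $|(c_a,c_b)|$ can only be $0$, $1$, or $d_i(j,j')+2$, this single inequality forces simultaneous bounds on (i) each coefficient $n_a$, (ii) the valency of each vertex in the Dynkin diagram of $\supp(\widehat{\alpha})$, and (iii) the edge multiplicities, hence also on the invariants $d_i(j,j')$. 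Combined with connectedness of the support, these bounds confine $\widehat{\alpha}$ to a finite list of shapes.

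Since the shape of $\alpha$ is, by definition, the image under $\Xi_\alpha$ of the finite set of shapes of its preimages $\widehat{\alpha}$, only finitely many shapes of $\alpha$ can arise; reducedness of $\mathbf{m}$ then eliminates the sole remaining source of infinity, namely scalar HTL summands at regular points that can be adjoined without changing $\alpha$ but would augment the spectral datum. The principal obstacle I anticipate is the previous paragraph: extracting simultaneous control of $n_a$, the valency, and the edge multiplicities from a single scalar identity requires ruling out the possibility that unbounded growth in one of these quantities is compensated by shrinkage in the others while still keeping every summand $\ge -|q|$. This combinatorial case analysis is the technical heart of \cite{HO}, building in turn on \cite{O} for the purely regular singular case.
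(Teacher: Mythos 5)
The paper offers no proof of this statement—it is quoted verbatim as Theorem 8 of \cite{HO}—so there is nothing internal to compare against; your skeleton (lift $\alpha$ to the honest Kac--Moody lattice $\widehat{\mathcal{L}}_{\alpha}$ via Proposition \ref{quotientmap}, transfer the fundamental-set inequalities through the isometry $\Xi_{\alpha}$, and exploit $(\widehat{\alpha},\widehat{\alpha})=\sum_{a}n_{a}(\widehat{\alpha},c_{a})$ with all summands non-positive) is indeed the right starting point and matches the strategy of \cite{HO}. The first genuine gap is your claim that the single inequality $0\le -n_{a}(\widehat{\alpha},c_{a})\le|q|$ "forces simultaneous bounds" on coefficients, valencies and edge multiplicities. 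It gives no information at any vertex where $(\widehat{\alpha},c_{a})=0$, and for $q=0$ it says only that $(\widehat{\alpha},c_{a})=0$ on all of the support—which is compatible with arbitrarily long legs carrying linearly decreasing coefficients and with every multiple $k\delta$ of the null root $\delta$ of an affine support. What actually produces the bounds is a structural analysis of the harmonic locus $\{a:(\widehat{\alpha},c_{a})=0\}$: one shows no connected component of it can be of affine type (pair the defining equations against that component's null root to get a sign contradiction), that the remaining components are pinned down by the boundary data at the at most $|q|$ defect vertices, and that this in turn bounds leg lengths, valencies and the $d_{i}(j,j')$. You correctly flag this as the technical heart and defer it to \cite{HO}, which means the proposal adds nothing beyond the citation at precisely the point where the content lies.

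The second gap is a genuine logical error: you never invoke effectiveness, i.e.\ $\alpha\in\Sigma_{\lambda}^{\text{dif}}$, and without it the statement is false. For $q=0$ the fundamental-set condition forces the support to be an affine diagram and $\alpha=k\delta$; every $k\ge1$ yields a spectral type that is basic and reduced, so ``basic + reduced'' alone admits infinitely many spectral types of index $0$. What eliminates $k\delta$ for $k\ge2$ is condition (2) in the definition of $\Sigma_{\lambda}^{\text{dif}}$ (through Theorem \ref{DSproblem}): the decomposition $k\delta=\delta+\cdots+\delta$ gives $p(k\delta)=1<k=\sum p(\delta)$, so $k\delta$ is never effective. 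Your closing sentence attributes the removal of ``the sole remaining source of infinity'' to reducedness, but reducedness only prunes redundant scalar regular singular points that leave $\alpha$ unchanged; it does nothing about the $k\delta$ family. Effectiveness must be used explicitly, at least in the index-zero case.
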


Let us see the cases $q=0$ and $-2$ for example.
The first case is $q=0$.
\begin{thm}[Theorem 9 in \cite{HO}]\label{tame}
	Shapes of fundamental spectral types $\mathbf{m}$ 
	satisfying $\mathrm{idx}\mathbf{m}=0$ 
	are  one of the following.
    \begin{gather*}
    \begin{xy}
        *++!D{1}*\cir<4pt>{}="A";
        \ar@{-} "A"; (7,0) *++!D{2}*{}*\cir<4pt>{}="B",
        \ar@{-} "B";(14,0) *+!L+!D{3}*\cir<4pt>{}="C",
        \ar@{-} "C";(21,0) *+!L+!D{2}*\cir<4pt>{}="D",
        \ar@{-} "D";(28,0) *+!L+!D{1}*\cir<4pt>{}
	\ar@{-} "C";(14,7) *++!L{2}*\cir<4pt>{}="E",
        \ar@{-} "E";(14,14) *++!L{1}*\cir<4pt>{}
    \end{xy}
    \quad
    \begin{xy}
        *++!D{1}*\cir<4pt>{}="A";
        \ar@{-} "A"; (7,0) *++!D{2}*\cir<4pt>{}="B",
        \ar@{-} "B";(14,0) *++!D{3}*\cir<4pt>{}="C",
        \ar@{-} "C";(21,0) *+!L+!D{4}*\cir<4pt>{}="D",
        \ar@{-} "D";(28,0) *+!L+!D{3}*\cir<4pt>{}="E",
        \ar@{-} "D";(21,7) *++!L{2}*\cir<4pt>{},
        \ar@{-} "E";(35,0) *+!L+!D{2}*\cir<4pt>{}="F",
        \ar@{-} "F";(42,0) *+!L+!D{1}*\cir<4pt>{}
    \end{xy}\allowdisplaybreaks\\
    \begin{xy}
        *++!D{2}*\cir<4pt>{}="A";
        \ar@{-} "A"; (7,0) *++!D{3}*\cir<4pt>{}="B",
        \ar@{-} "B";(14,0) *++!D{4}*\cir<4pt>{}="C",
        \ar@{-} "C";(21,0) *++!D{5}*\cir<4pt>{}="D",
        \ar@{-} "D";(28,0) *+!L+!D{6}*\cir<4pt>{}="E",
        \ar@{-} "E";(35,0) *+!L+!D{4}*\cir<4pt>{}="F",
        \ar@{-} "F";(42,0) *+!L+!D{2}*\cir<4pt>{},
        \ar@{-} "A";(-7,0) *++!D{1}*\cir<4pt>{}="F",
        \ar@{-} "E";(28,7) *++!L{3}*\cir<4pt>{},
    \end{xy}\quad
    \begin{xy}
        \ar@{-} *+!R+!D{2}*\cir<4pt>{}="A";(7,0) *++!D{1}*\cir<4pt>{},
        \ar@{-} "A"; (0,7) *++!L{1}*\cir<4pt>{},
        \ar@{-} "A"; (-7,0) *+!R+!D{1}*\cir<4pt>{},
	\ar@{-} "A"; (0,-7) *+!R+!D{1}*\cir<4pt>{}
    \end{xy}\allowdisplaybreaks\\
    \begin{xy}
        \ar@{-} *+!R+!D{1}*\cir<4pt>{}="A"; (7,0) *+!L+!D{1}*\cir<4pt>{}="B",
        \ar@{-} (0,7) *++!D{1}*\cir<4pt>{}="C";(7,7) *++!D{1}*\cir<4pt>{}="D",
        \ar@{-} "A";"C",
        \ar@{-} "B";"D"
        \ar@{} (7,-5) *{(1)(1),11,11}
        \ar@{} (7,-10) *{( (1)(1))( (1)(1))}
    \end{xy}\quad
    \begin{xy}
        \ar@{-}*++!D{1}*\cir<4pt>{}="A";(10,0)*++!D{1}*\cir<4pt>{}="B",
        \ar@{-}"A";(5,7)*++!D{1}*\cir<4pt>{}="C",
        \ar@{-}"B";"C"
        \ar@{} (10,-5) *{( (1))( (1)),11}
        \ar@{} (10,-10) *{( (1))( (1))( (1))}
    \end{xy}\quad
    \begin{xy}
        \ar@{=}*++!D{1}*\cir<4pt>{};(7,0)*++!D{1}*\cir<4pt>{}
        \ar@{} (7,-5) *{( ( (1)))( ( (1)))}
    \end{xy}\ 
    \begin{xy}
        \ar@{=}(0,0)*+!R+!D{1-a}*\cir<4pt>{};(7,7) *++!D{1-a}*\cir<4pt>{},
        \ar@{=}(0,7)*+!R+!D{a}*\cir<4pt>{};(7,0) *+!L+!D{a}*\cir<4pt>{},
        \ar@{} (8,-5) *{(a\in \mathbb{Z})}
        \ar@{} (1,-10) *{(1)(1),(1)(1)}
    \end{xy}
    \end{gather*}
     We simply write sets $\{x_{a}\mid a\in \mathbb{Z}\}$ and $\{x\}$ by $x_{a}\,(a\in\mathbb{Z})$ and $x$, respectively.
    For  the first $4$ star shaped graphs, corresponding spectral types are 
    given in Remark \ref{starspectral} below.
\end{thm}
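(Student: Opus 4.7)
The plan is to translate the problem into a question about positive imaginary roots on the lifted lattice $\widehat{\mathcal{L}}$ and then invoke the classification of affine Dynkin diagrams. By Definition of fundamental spectral type, $\mathbf{m}=\mathbf{m}_{\alpha}$ with $\alpha\in\tilde{F}\subset\mathcal{L}^{+}$, and $\mathrm{idx}\mathbf{m}=2q(\alpha)=0$ means $q(\alpha)=0$. So I want to classify, up to the shape equivalence, the positive elements $\alpha\in\mathcal{L}$ lying in the $\mathcal{L}$-fundamental set $\tilde{F}$ and satisfying $q(\alpha)=0$, subject to the effectiveness and reducedness constraints.

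First I would push the problem up to $\widehat{\mathcal{L}}_{\alpha}$ via Proposition \ref{quotientmap}. Since $\Xi_{\alpha}$ is an isometry, any preimage $\gamma\in\Xi_{\alpha}^{-1}(\alpha)\subset\widehat{\mathcal{L}}_{\alpha}^{+}$ satisfies $q(\gamma)=q(\alpha)=0$. A short lemma (which I would need to verify explicitly) shows that one can choose $\gamma$ lying in the ordinary Kac-Moody fundamental set of $\widehat{\mathcal{L}}_{\alpha}$: its support is connected (inherited from $\alpha$'s $\mathcal{L}$-connected support together with the choice of one $c_{\mathbf{i}}$ per $[i,j]$ in $\mathrm{supp}(\alpha)$) and $(\gamma,c_{a})\le 0$ for every simple root $c_{a}$, using $W^{\mathrm{mc}}$-equivariance of $\Xi$ together with the explicit formulas (\ref{equ5})-(\ref{equ7}) to transport the $\tilde{F}$-conditions on $\alpha$ to conditions on $\gamma$.

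Next I would invoke the classical Kac theorem: a positive imaginary root $\gamma$ in the Kac-Moody fundamental set with $q(\gamma)=0$ has connected support whose Dynkin diagram is of \emph{affine} (extended) type, and $\gamma$ is a positive integer multiple of the null root of that affine diagram. The list of affine Dynkin diagrams with their null-root coefficients is the classical ADE-BCFG extended list, and since $\widehat{\mathcal{L}}$ is symmetric, only the simply-laced types $\widetilde{A}_{n},\widetilde{D}_{n},\widetilde{E}_{6},\widetilde{E}_{7},\widetilde{E}_{8}$ and the double-edge $\widetilde{A}_{1}$ can occur. Projecting these null-root configurations back through $\Xi$ (which collapses the $c_{\mathbf{i}}$'s sharing a common $[i,j]$ coordinate) then yields the finite list of diagrams displayed in the theorem: $\widetilde{E}_{6},\widetilde{E}_{7},\widetilde{E}_{8},\widetilde{D}_{4}$, the four-cycle $\widetilde{A}_{3}$, the triangle $\widetilde{A}_{2}$, the double edge $\widetilde{A}_{1}$, and the parametric family coming from multiple $\Xi$-preimages of the same $\alpha$.

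The main obstacle is the last item: the $(a,1-a,1-a,a)$ family. Because $\mathrm{Ker}(\Xi)$ is nontrivial whenever more than one $m^{(i)}>1$ (Proposition \ref{quotientmap}(2)), a single $\alpha\in\mathcal{L}$ can lift to infinitely many $\gamma$'s whose support diagrams differ. The shape of $\alpha$ is the \emph{set} of all such diagrams. I would handle this by fixing $\alpha$ with the relevant spectral type $(1)(1),(1)(1)$ and explicitly parametrizing $\Xi_{\alpha}^{-1}(\alpha)\cap\widehat{\mathcal{L}}_{\alpha}^{+}$ by an integer $a$, checking that each lift still sits in the Kac-Moody fundamental set and has $q=0$, and verifying the diagram is always $\widetilde{A}_{3}$ with the indicated coefficients. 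Finally, for each candidate shape I would check effectiveness by exhibiting a collection $\mathbf{B}$ of HTL normal forms with $\mathfrak{M}(\mathbf{B})\ne\emptyset$ (using Theorem \ref{DSproblem}) and reducedness directly from the diagram; the remark following Theorem \ref{finitespec} rules out the non-reduced cases by the addition reduction. This completes the classification.
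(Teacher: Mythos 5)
The paper does not prove this statement at all: it is imported verbatim as Theorem 9 of \cite{HO}, so there is no internal proof to compare against. Your strategy --- lift $\alpha\in\tilde{F}$ with $q(\alpha)=0$ through the isometry $\Xi_{\alpha}$ to the genuine symmetric Kac--Moody lattice $\widehat{\mathcal{L}}_{\alpha}$, apply Kac's classification of isotropic elements of the fundamental set (affine support, $\gamma=k\delta$), and project back --- is the natural one and is evidently the method of \cite{HO} (whose title is precisely a classification of roots of symmetric Kac--Moody root systems). The observation that $(\gamma,c_{a})=(\alpha,\epsilon_{a})\le 0$ is automatic for \emph{every} lift, by isometry, is correct and is the right way to transport the $\tilde{F}$-condition.

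There are, however, two places where your sketch skips the actual content. First, your ``short lemma'' producing a lift in the ordinary fundamental set is not short: positivity of a lift is a transportation-polytope statement (solvable because the margins agree, which is exactly the definition of $\mathcal{L}$), but connectedness of the support genuinely fails for most lifts --- in the $(1)(1),(1)(1)$ example every lift $(a,a,1-a,1-a)$ has support equal to two disjoint double bonds unless $a\in\{0,1\}$ --- so you must either prove a connected positive lift always exists or run Kac's lemma componentwise; you cannot simply ``inherit'' connectedness from $\alpha$. Second, ``projecting back yields the finite list displayed'' conceals the combinatorial core of the theorem: Kac's lemma only says the support is \emph{some} affine diagram and $\gamma=k\delta$, and you still must (a) exclude the multiples $k\ge 2$, which does not follow from being in $\tilde{F}$ but from effectiveness --- $k\delta=\delta+\cdots+\delta$ violates condition (2) of $\Sigma_{\lambda}^{\text{dif}}$ since $p(k\delta)=1\not>k=\sum p(\delta)$ --- whereas you only invoke effectiveness in the positive direction (exhibiting $\mathbf{B}$ for the listed shapes); and (b) determine which affine diagrams are actually realizable as supports inside the constrained lattices $\widehat{\mathcal{L}}_{\alpha}$ (complete multipartite core on the $c_{\mathbf{i}}$ with bond multiplicities $\sum(d_{i}+2)-2$, plus type-$A$ legs), which is what rules out $\widetilde{A}_{n}$ for $n\ge 4$, $\widetilde{D}_{n}$ for $n\ge 5$, and any diagram containing a bond of multiplicity $\ge 3$. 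These are finite checks and your framework accommodates them, but as written the proposal asserts the answer rather than deriving it.
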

If $\mathfrak{M}_{\lambda}(\mathsf{Q},\alpha)^{\text{dif}}\neq \emptyset$ and  $\alpha\in \tilde{F}$ with $q(\alpha)=0$,  
	then by the above list of shapes of $\alpha$, 
	we can check that $\alpha$ is invariant under 
	$W^{\text{mc}}_{\alpha}$, i.e., $w(\alpha)=\alpha$ for 
	any $w\in W_{\alpha}^{\mathrm{mc}}$. 
	Then
	\[	
		s_{a}\colon
		\mathfrak{M}_{\lambda}(\mathsf{Q},\alpha)^{\text{dif}}
		\longrightarrow
		\mathfrak{M}_{r_{a}(\lambda)}(\mathsf{Q},\alpha)^{\text{dif}}
	\]
	for each 
	$a\in (\mathcal{J}\cup \mathsf{Q}_{0}^{\text{leg}})_{\alpha}$
	defines a 
	$W_{\alpha}^{\mathrm{mc}}$-action on the parameter space 
	\[ \begin{array}{lccc}
			r_{a}\colon&
			\sum_{a\in 
		(\mathcal{J}\cup \mathsf{Q}_{0}^{\text{leg}})_{\alpha}}\mathbb{C}c_{a}&\longrightarrow &	\sum_{a\in 
		(\mathcal{J}\cup \mathsf{Q}_{0}^{\text{leg}})_{\alpha}}\mathbb{C}c_{a}\\
	&\lambda&\longmapsto&r_{a}(\lambda)\end{array},
	\]
	see also Proposition 3.7 in \cite{H}.
	Here if $\lambda_{a}=0$, i.e., $s_{a}$ on $\mathfrak{M}_{\lambda}(\mathsf{Q},\alpha)^{\text{dif}}$ is not well-defined,
	we formally set $s_{a}=\mathrm{id}$ and $r_{a}=\mathrm{id}$.
	By the above theorem, $W_{\alpha}^{\mathrm{mc}}$ is isomorphic to 
	one of the Weyl groups of the 
	following types,
	\[
		E^{(1)}_{8},\ 
		E^{(1)}_{7},\
		E^{(1)}_{6},\
		D^{(1)}_{4},\
		A^{^(1)}_{3},\ 
		A^{(1)}_{2},\
		A^{(1)}_{1},\
		A^{(1)}_{1}\times
		A^{(1)}_{1}.
	\]

	\begin{rem}\normalfont\label{starspectral}
    In the above list of shapes, 
    we omit the spectral types for star-shaped diagrams. 
    For these cases spectral types are obtained as follows.
    Consider a shape
\begin{xy}
\ar@{-}               *++!D{\text{$n_0$}}  *\cir<4pt>{}="O";
             (10,0)   *+!L!D{\text{$n_{1,1}$}} *\cir<4pt>{}="A",
\ar@{-} "A"; (20,0)   *+!L!D{\text{$n_{1,2}$}} *\cir<4pt>{}="B",
\ar@{-} "B"; (30,0)   *{\cdots}, 
\ar@{-} "O"; (10,-7)  *+!L!D{\text{$n_{2,1}$}} *\cir<4pt>{}="C",
\ar@{-} "C"; (20,-7)  *+!L!D{\text{$n_{2,2}$}} *\cir<4pt>{}="E",
\ar@{-} "E"; (30,-7)  *{\cdots}
\ar@{-} "O"; (10,8)   *+!L!D{\text{$n_{0,1}$}} *\cir<4pt>{}="D",
\ar@{-} "D"; (20,8)   *+!L!D{\text{$n_{0,2}$}} *\cir<4pt>{}="F",
\ar@{-} "F"; (30,8)   *{\cdots}
\ar@{} (10,-10) *{\vdots}
\ar@{} (20,-10) *{\vdots}
\ar@{-} "O"; (10,-17) *+!L!D{\text{$n_{p,1}$}} *\cir<4pt>{}="G",
\ar@{-} "G"; (20,-17) *+!L!D{\text{$n_{p,2}$}} *\cir<4pt>{}="H",
\ar@{-} "H"; (30,-17) *+!U{\phantom*}*{\cdots}
\end{xy}
and put $m_{(i,1)}:=n_0-n_{i,1}$, $m_{(i,j+1)}:=n_{i,j}-n_{i,j+1}$, 
$m_{(i,0)}:=\sum_{\substack{0\le k\le p\\ k\neq i}}n_{k,1}-n_{0}$ 
and $m_{(0)}:=\sum_{i=0}^{p}n_{i,1}-n_{0}$. 
Then the shape corresponds to the following $5$ types.
\begin{align*}
 &m_{(0,1)}m_{(0,2)}\ldots,\,m_{(1,1)}m_{(1,2)}\ldots,\,\ldots,\,m_{(p,1)}m_{(p,2)}\ldots,\\
 &m_{(0)}n_{0},\,(m_{(0,2)}m_{(0,3)}\ldots)\ldots(m_{(p,2)}m_{(p,3)}\ldots),\\
 &m_{(i,0)}m_{(i,1)}\ldots,\,(m_{(0,2)}m_{(0,3)}\ldots)\ldots(m_{(i-1,2)}\ldots)(m_{(i+1,2)}\ldots)\ldots,\\
 &((m_{(i,1)}m_{(i,2)}\ldots))((m_{(0,2)}m_{(0,3)}\ldots)\ldots(m_{(i-2,2)}\ldots)(m_{(i+1,2)}\ldots)\ldots),\\
 &((n_0))((m_{(0,2)}m_{(0,3)}\ldots)\ldots(m_{(p,2)}m_{(p,3)}\ldots)).
\end{align*}
\end{rem}

Next let us see the case $q=-2$.
\begin{thm}[Theorem 10 in \cite{HO}]\label{class}
	Shapes of fundamental spectral types $\mathbf{m}$ satisfying 
	$\mathrm{idx\,}\mathbf{m}=-2$ are 
	one of the following.
    \begin{gather*}
    \begin{xy}
        \ar@{-} (0,3.5) *+!R+!D{a}*{\cdot}*\cir<4pt>{}="A";(7,3.5)*+!L+!D{1-a}*{\cdot}*\cir<4pt>{}="B",
        \ar@{-} (0,-3.5) *+!R+!D{1-a}*{\cdot}*\cir<4pt>{}="C";(7,-3.5) *+!L+!D{a}*{\cdot}*\cir<4pt>{}="D",
        \ar@3{-} "A";"D",
        \ar@3{-} "B";"C"
        \ar@{} (3,-8) *{( (1))( (1)), (1)(1)}
	\ar@{} (3,-13) *{W^{\text{inv}}=\emptyset}
    \end{xy}\hspace{-.3cm}
     (a\in \mathbb{Z})\qquad
    \begin{xy}
        \ar@{=} (0,5)*++!D{1-a}*{\cdot}*\cir<4pt>{}="A";(0,-5) *+!R+!D{1-a}*{\cdot}*\cir<4pt>{}="B",
        \ar@{-} "A"; (7,0) *++!D{1}*\cir<4pt>{}="C",
        \ar@{-} "B";"C", 
        \ar@{=} (14,5)*++!D{a}*{\cdot}*\cir<4pt>{}="D";(14,-5) *+!L+!D{a}*{\cdot}*\cir<4pt>{}="E",
        \ar@{-} "D";"C",
        \ar@{-} "E";"C",
        \ar@{} (5,-10) *{(1)(1),(1)(1),11}
	\ar@{} (5,-15) *{W^{\text{inv}}=A_{1}}
    \end{xy}
     (a\in \mathbb{Z})\allowdisplaybreaks\\
    \begin{xy}
        \ar@{=} (0,-6)*+!R+!D{2-a}*{\cdot}*\cir<4pt>{}="A";(10,-6) *+!L+!D{2-a}*{\cdot}*\cir<4pt>{}="B",
        \ar@{-} (0,2)*+!R+!D{1}*\cir<4pt>{}="C";(5,2)*+!R+!D{a}*\cir<4pt>{}="D",
        \ar@{-} "D"; (10,2) *+!L+!D{1}*\cir<4pt>{}="E",
        \ar@{=} "D";(5,9) *++!D{a-1}*{\cdot}*\cir<4pt>{},
        \ar@{-} "A";"C",
        \ar@{-} "B";"E"
        \ar@{} (5,-11) *{(1)(11),(1)(11)}
	\ar@{} (5,-16) *{W^{\text{inv}}=A_{3}}
    \end{xy}
    (a\in\mathbb{Z})\qquad
    \begin{xy}
        \ar@{=}*+!R+!D{2-a}*\cir<4pt>{};(7,0)*++!D{2-a}*{\cdot}*\cir<4pt>{}="A",
        \ar@{-} "A";(14,0)*++!D{1}*\cir<4pt>{}="B",
        \ar@{-} "B";(21,0) *++!D{a}*{\cdot}*\cir<4pt>{}="C",
        \ar@{=} "C";(28,0) *++!D{a}*\cir<4pt>{}
        \ar@{} (14,-5) *{(2)(2),(2)(11)}
	\ar@{} (14,-10) *{W^{\text{inv}}=A_1\times A_{1}\times A_{1}}
    \end{xy}
    \ \ (a\in\mathbb{Z})\allowdisplaybreaks\\
    \begin{xy}
        (0,5)*++!D{a}*\cir<4pt>{}="A", (7,5) *++!D{b}*\cir<4pt>{}="B",
        (14,5)*+!L+!D{2-a-b}*\cir<4pt>{}="C", 
        (0,-5)*+!R+!U{1-a}*{\cdot}*\cir<4pt>{}="A'", (7,-5) *++!U{1-b}*{\cdot}*\cir<4pt>{}="B'",
        (14,-5)*+!L+!U{a+b-1}*{\cdot}*\cir<4pt>{}="C'",
        \ar@{=} "A";"B'",
        \ar@{=} "A";"C'",
        \ar@{=} "B";"A'",
        \ar@{=} "B";"C'",
        \ar@{=} "C";"A'",
        \ar@{=} "C";"B'"
        \ar@{} (7,-13) *{(1)(1)(1),(2)(1)}
	\ar@{} (7,-18) *{W^{\text{inv}}=A_{1}\times A_{1}\times A_{1}}
    \end{xy}
    \hspace{-5mm}(a,\,b\in\mathbb{Z})
\quad
    \begin{xy}
        \ar@{=}*+!R+!D{a-1}*{\cdot}*\cir<4pt>{};(7,0)*++!D{a}*\cir<4pt>{}="A",
        \ar@{-} "A";(14,0)*+!R+!D{2}*\cir<4pt>{}="B",
        \ar@{-} "B";(21,0) *++!D{3-a}*\cir<4pt>{}="C",
        \ar@{=} "C";(28,0) *+!L+!D{2-a}*{\cdot}*\cir<4pt>{},
        \ar@{-} "B"; (14,7) *++!D{1}*\cir<4pt>{},
        \ar@{} ;(21,-5) *{(a\in\mathbb{Z})}
        \ar@{} (14,-10) *{(2)(2),(1)(111)}
	\ar@{} (14, -15)*{W^{\text{inv}}=D_{4}}
    \end{xy}\allowdisplaybreaks\\
     \begin{xy}
        \ar@3{-} *++!D{1}*{\cdot}*\cir<4pt>{};(7,0)*++!D{1}*{\cdot}*\cir<4pt>{}.
        \ar@{} (4,-5) *{( ( ( (1))))( ( ( (1))))}
	\ar@{} (4,-10) *{W^{\text{inv}}=\emptyset}
    \end{xy}\quad
    \begin{xy}
        \ar@{=}*++!D{2}*\cir<4pt>{};(7,0) *++!D{2}*{\cdot}*\cir<4pt>{}="A",
        \ar@{-} "A";(14,0) *++!D{1}*\cir<4pt>{}
        \ar@{} (7,-5) *{( ( (2)))( ( (11)))}
	\ar@{} (7,-10) *{W^{\text{inv}}=A_{1}\times A_{1}}
    \end{xy}\quad
    \begin{xy}
        \ar@{=} *++!D{1}*\cir<4pt>{};(7,0) *++!D{1}*{\cdot}*\cir<4pt>{}="A",
        \ar@{=} "A";(14,0) *++!D{1}*\cir<4pt>{}
        \ar@{} (7,-5) *{( ( (1)(1)))( ( (1)))}
	\ar@{} (7,-10) *{W^{\text{inv}}=A_{1}\times A_{1}}
    \end{xy}\quad
    \begin{xy}
        \ar@{=}*++!D{1}*{\cdot}*\cir<4pt>{}="A";(10,0) *++!D{1}*{\cdot}*\cir<4pt>{}="B",
        \ar@{-}"A";(5,7)*++!D{1}*\cir<4pt>{}="C",
        \ar@{-}"B";"C" 
        \ar@{} (5,-5) *{( ( (1)))( ( (1))),11}
	\ar@{} (5,-10) *{W^{\text{inv}}=A_{1}}
    \end{xy}\allowdisplaybreaks\\
    \begin{xy}
        *+!R+!D{2}*{\cdot}*\cir<4pt>{}="A",
        \ar@{-} "A"; (6.66,2.17) *++!D{1}*\cir<4pt>{},
        \ar@{-} "A";(4.11,-5.66) *+!L+!D{1}*\cir<4pt>{},
        \ar@{-} "A";(0,7) *++!D{1}*\cir<4pt>{}, 
        \ar@{-} "A"; (-6.66,2.17) *++!D{1}*\cir<4pt>{},
        \ar@{-} "A";(-4.11,-5.66) *+!R+!D{1}*\cir<4pt>{},
	\ar@{} (0,-10) *{W^{\text{inv}}=(A_{1})^{5}}
    \end{xy}\ 
    \begin{xy}
        \ar@{-}*++!D{2}*\cir<4pt>{};(7,0) *+!R+!D{4}*\cir<4pt>{}="A",
        \ar@{-}"A"; (14,0)*++!D{2}*{\cdot}*\cir<4pt>{}="B",
        \ar@{-}"B";(21,0) *++!D{1}*\cir<4pt>{},
        \ar@{-} "A";(7,7)*++!D{2}*\cir<4pt>{},
        \ar@{-} "A";(7,-7)*+!R+!D{2}*\cir<4pt>{}
	\ar@{} (12,-10) *{W^{\text{inv}}=D_{4}\times A_{1}}
    \end{xy}\ 
    \begin{xy}
        \ar@{-}*++!D{2}*\cir<4pt>{}="F";(-7,0)*++!D{1}*\cir<4pt>{},
        \ar@{-}"F";(7,0) *+!R+!D{3}*\cir<4pt>{}="A",
        \ar@{-}"A"; (14,0)*++!D{2}*\cir<4pt>{}="B",
        \ar@{-}"B";(21,0) *++!D{1}*\cir<4pt>{},
        \ar@{-} "A";(7,7)*++!D{1}*{\cdot}*\cir<4pt>{},
        \ar@{-} "A";(7,-7)*+!R+!D{1}*{\cdot}*\cir<4pt>{}
	\ar@{} (9,-10) *{W^{\text{inv}}=A_{5}}
    \end{xy}\ 
    \begin{xy}
        \ar@{-}*++!D{1}*{\cdot}*\cir<4pt>{};(7,0) *+!R+!D{4}*\cir<4pt>{}="A",
        \ar@{-}"A"; (14,0)*++!D{3}*\cir<4pt>{}="B",
        \ar@{-}"B";(21,0) *++!D{2}*\cir<4pt>{}="D",
        \ar@{-} "D";(28,0) *++!D{1}*\cir<4pt>{},
        \ar@{-} "A";(7,7)*++!D{2}*\cir<4pt>{}="C",
        \ar@{-} "A";(7,-7)*+!R+!D{2}*\cir<4pt>{}
	\ar@{} (9,-11) *{W^{\text{inv}}=D_{6}}
    \end{xy}\allowdisplaybreaks\\
    \begin{xy}
        *++!D{2}*{\cdot}*\cir<4pt>{}="A";
        \ar@{-} "A"; (7,0) *++!D{4}*\cir<4pt>{}="B",
        \ar@{-} "B";(14,0) *+!L+!D{6}*\cir<4pt>{}="C",
        \ar@{-} "C";(21,0) *++!D{4}*\cir<4pt>{}="D",
        \ar@{-} "D";(28,0) *++!D{2}*\cir<4pt>{}
        \ar@{-} "C";(14,7) *+!L+!D{4}*\cir<4pt>{}="E",
        \ar@{-} "E";(14,14) *++!L{2}*\cir<4pt>{},
        \ar@{-} "A";(-7,0) *++!D{1}*\cir<4pt>{}
	\ar@{} (9,-4) *{W^{\text{inv}}=E_{6}\times A_{1}}
    \end{xy}\quad
    \begin{xy}
        *++!D{1}*\cir<4pt>{}="A";
        \ar@{-} "A"; (7,0) *++!D{2}*\cir<4pt>{}="B",
        \ar@{-} "B";(14,0) *++!D{3}*\cir<4pt>{}="C",
        \ar@{-} "C";(21,0) *+!L+!D{4}*\cir<4pt>{}="D",
        \ar@{-} "D";(28,0) *++!D{3}*\cir<4pt>{}="E",
        \ar@{-} "D";(21,7) *+!L+!D{2}*{\cdot}*\cir<4pt>{}="G",
        \ar@{-} "E";(35,0) *++!D{2}*\cir<4pt>{}="F",
        \ar@{-} "F";(42,0) *++!D{1}*\cir<4pt>{}
        \ar@{-} "G";(21,14) *++!L{1}*\cir<4pt>{}
	\ar@{} (15,-4) *{W^{\text{inv}}=A_{7}\times A_{1}}
    \end{xy}\\
    \begin{xy}
        *++!D{2}*{\cdot}*\cir<4pt>{}="A";
        \ar@{-} "A";(-7,0) *++!D{1}*\cir<4pt>{},
        \ar@{-} "A"; (7,0) *++!D{4}*\cir<4pt>{}="B",
        \ar@{-} "B";(14,0) *++!D{6}*\cir<4pt>{}="C",
        \ar@{-} "C";(21,0) *+!L+!D{8}*\cir<4pt>{}="D",
        \ar@{-} "D";(28,0) *++!D{6}*\cir<4pt>{}="E",
        \ar@{-} "D";(21,7) *++!D{4}*\cir<4pt>{}="G",
        \ar@{-} "E";(35,0) *++!D{4}*\cir<4pt>{}="F",
        \ar@{-} "F";(42,0) *++!D{2}*\cir<4pt>{}
	\ar@{} (20,-4) *{W^{\text{inv}}=E_{7}\times A_{1}}
    \end{xy}\ 
    \begin{xy}
        *++!D{2}*\cir<4pt>{}="A";
        \ar@{-} "A"; (7,0) *++!D{3}*\cir<4pt>{}="B",
        \ar@{-} "B";(14,0) *++!D{4}*\cir<4pt>{}="C",
        \ar@{-} "C";(21,0) *++!D{5}*\cir<4pt>{}="D",
        \ar@{-} "D";(28,0) *+!L+!D{6}*\cir<4pt>{}="E",
        \ar@{-} "E";(35,0) *++!D{4}*\cir<4pt>{}="F",
        \ar@{-} "F";(42,0) *++!D{2}*{\cdot}*\cir<4pt>{}="G",
        \ar@{-} "A";(-7,0) *++!D{1}*\cir<4pt>{},
        \ar@{-} "G";(49,0) *++!D{1}*\cir<4pt>{},
        \ar@{-} "E";(28,7) *++!D{3}*\cir<4pt>{},
	\ar@{} (20,-4) *{W^{\text{inv}}=D_{8}\times A_{1}}
    \end{xy}\\
    \begin{xy}
        *++!D{4}*\cir<4pt>{}="A";
        \ar@{-} "A"; (7,0) *++!D{6}*\cir<4pt>{}="B",
        \ar@{-} "B";(14,0) *++!D{8}*\cir<4pt>{}="C",
        \ar@{-} "C";(21,0) *++!D{10}*\cir<4pt>{}="D",
        \ar@{-} "D";(28,0) *+!L+!D{12}*\cir<4pt>{}="E",
        \ar@{-} "E";(35,0) *!L++!D{8}*\cir<4pt>{}="F",
        \ar@{-} "F";(42,0) *++!D{4}*\cir<4pt>{},
        \ar@{-} "A";(-7,0) *++!D{2}*{\cdot}*\cir<4pt>{}="F",
        \ar@{-} "E";(28,7) *++!D{6}*\cir<4pt>{},
        \ar@{-} "F";(-14,0) *++!D{1}*\cir<4pt>{},
	\ar@{} (20,-4) *{W^{\text{inv}}=E_{8}\times A_{1}}
    \end{xy}\ 
    \begin{xy}
        *++!D{1} *\cir<4pt>{}="A";
        \ar@{-} "A";(7,0) *++!D{2}*\cir<4pt>{}="A";
        \ar@{-} "A";(14,0) *++!D{3}*\cir<4pt>{}="A";
        \ar@{-} "A";(21,0) *++!D{4}*\cir<4pt>{}="A";
        \ar@{-} "A";(28,0) *+!L+!D{5}*\cir<4pt>{}="C";
        \ar@{-} "C";(35,0) *++!D{4}*\cir<4pt>{}="A";
        \ar@{-} "A";(42,0) *++!D{3}*\cir<4pt>{}="A";
        \ar@{-} "A";(49,0) *++!D{2}*\cir<4pt>{}="A";
        \ar@{-} "A";(56,0) *++!D{1}*\cir<4pt>{}
        \ar@{-} "C";(28,7) *++!D{2}*{\cdot}*\cir<4pt>{}
	\ar@{} (20,-4) *{W^{\text{inv}}=A_{9}}
    \end{xy}\\ 
    \begin{xy}
        *++!D{1} *\cir<4pt>{}="A";
        \ar@{-} "A";(7,0) *++!D{2}*\cir<4pt>{}="A";
        \ar@{-} "A";(14,0) *++!D{3}*\cir<4pt>{}="A";
        \ar@{-} "A";(21,0) *++!D{4}*\cir<4pt>{}="A";
        \ar@{-} "A";(28,0) *++!D{5}*\cir<4pt>{}="A";
        \ar@{-} "A";(35,0) *++!D{6}*\cir<4pt>{}="A";
        \ar@{-} "A";(42,0) *++!D{7}*\cir<4pt>{}="A";
        \ar@{-} "A";(49,0) *+!L+!D{8}*\cir<4pt>{}="C";
        \ar@{-} "C";(49,7) *++!D{4}*\cir<4pt>{};
        \ar@{-} "C";(56,0) *++!D{5}*\cir<4pt>{}="C";
        \ar@{-} "C";(63,0) *++!D{2}*{\cdot}*\cir<4pt>{}
	\ar@{} (20,-4) *{W^{\text{inv}}=D_{10}}
    \end{xy}\
    \begin{xy}
        *++!D{1}*{\cdot}*\cir<4pt>{}="A",
        \ar@{-} "A";(7,0) *++!D{4}*\cir<4pt>{}="B",
        \ar@{-} "B";(14,0) *++!D{7}*\cir<4pt>{}="D",
        \ar@{-} "D";(21,0) *+!L+!D{10}*\cir<4pt>{}="G",
        \ar@{-} "G";(28,0) *++!D{8}*\cir<4pt>{}="H",
        \ar@{-} "H";(35,0) *++!D{6}*\cir<4pt>{}="I",
        \ar@{-} "I";(42,0) *++!D{4}*\cir<4pt>{}="J",
        \ar@{-} "J";(49,0) *++!D{2}*\cir<4pt>{},
        \ar@{-} "G";(21,7) *++!D{5}*\cir<4pt>{}
	\ar@{} (20,-4) *{W^{\text{inv}}=E_{8}}
    \end{xy}\allowdisplaybreaks\\
    \begin{xy}
        *++!D{1}*{\cdot}*\cir<4pt>{}="A";
        \ar@{-} "A"  ;(7,0) *++!D{3}*\cir<4pt>{}="A";
        \ar@{-} "A";(14,0) *+!L+!D{5}*\cir<4pt>{}="B";
        \ar@{-} "B";(14,7) *+!L+!D{3}*\cir<4pt>{}="C";
        \ar@{-} "C";(14,14)*++!L{1}*{\cdot}*\cir<4pt>{};
        \ar@{-} "B";(21,0) *++!D{4}*\cir<4pt>{}="A";
        \ar@{-} "A";(28,0) *++!D{3}*\cir<4pt>{}="A";
        \ar@{-} "A";(35,0) *++!D{2}*\cir<4pt>{}="A";
        \ar@{-} "A";(42,0) *++!D{1}*\cir<4pt>{};
	\ar@{} (20,-4) *{W^{\text{inv}}=D_{7}}
    \end{xy}\quad
    \begin{xy}
        \ar@{-} (0,3.5)*++!D{2}*{\cdot}*\cir<4pt>{}="A";(5,7) *++!D{2}*\cir<4pt>{}="B",
        \ar@{-}"A";(5,0) *+!L+!D{2}*\cir<4pt>{}="C",
        \ar@{-}"C";"B",
        \ar@{-} "A";(-7,3.5)*++!D{1}*\cir<4pt>{}
        \ar@{} (0,-5) *{( (2))( (2))( (11))}
        \ar@{} (0,-10) *{( (2))( (11)),22}
        \ar@{} (0,-15) *{( (2))( (2)),211}
	\ar@{} (0,-20) *{W^{\text{inv}}=A_{2}\times A_{1}}
    \end{xy}\quad
    \begin{xy}
        \ar@{-} (0,3.5) *++!D{2}*\cir<4pt>{}="A"; (5,7) *++!D{2}*\cir<4pt>{}="B",
        \ar@{-}"A";(5,0) *+!L+!D{1}*{\cdot}*\cir<4pt>{}="C",
        \ar@{-}"C";"B",
        \ar@{-} "A";(-7,3.5)*++!D{1}*\cir<4pt>{},
        \ar@{-} "B";(12,7)*++!D{1}*\cir<4pt>{}
        \ar@{} (0,-5) *{( (11))( (11))( (1))}
        \ar@{} (0,-10) *{( (11))( (1)),111}
        \ar@{} (0,-15) *{( (11))( (11)),31}
	\ar@{} (0,-20) *{W^{\text{inv}}=A_{4}}
    \end{xy}\\
    \begin{xy}
        \ar@{-}*+!R+!D{2}*{\cdot}*\cir<4pt>{}="A";(7,0)*+!L+!D{2}*\cir<4pt>{}="B",
        \ar@{-}"A";(0,7)*++!D{2}*\cir<4pt>{}="C",
        \ar@{-}"B";(7,7)*++!D{2}*\cir<4pt>{}="D",
        \ar@{-}"C";"D",
        \ar@{-}(-7,0)*++!D{1}*\cir<4pt>{};"A"
        \ar@{} (0,-5) *{( (2)(2))( (2)(11))}
        \ar@{} (0,-10) *{ (2)(2),22,211}
        \ar@{} (0,-15) *{ (2)(11),22,22}
	\ar@{} (0,-20) *{W^{\text{inv}}=A_{3}\times A_{1}}
    \end{xy}\allowdisplaybreaks\
    \begin{xy}
        \ar@{-}*+!R+!D{2}*\cir<4pt>{}="A";(7,0)*+!L+!D{2}*\cir<4pt>{}="B",
        \ar@{-}"A";(0,7)*++!D{2}*\cir<4pt>{}="C",
        \ar@{-}"B";(7,7)*++!D{1}*{\cdot}*\cir<4pt>{}="D",
        \ar@{-}"C";"D",
        \ar@{-}(14,0)*++!D{1}*\cir<4pt>{};"B",
        \ar@{-}(-7,7)*++!D{1}*\cir<4pt>{};"C"
        \ar@{} (2,-5) *{( (11)(11))( (2)(1))}
        \ar@{} (2,-10) *{ (11)(11),22,31}
        \ar@{} (2,-15) *{(2)(1),111,111}
	\ar@{} (2,-20) *{W^{\text{inv}}=A_{5}}
    \end{xy}\ 
    \begin{xy}
        \ar@{-}*+!R+!D{2}*\cir<4pt>{}="A";(7,0)*+!L+!D{1}*{\cdot}*\cir<4pt>{}="B",
        \ar@{-}"A";(0,7)*++!D{2}*\cir<4pt>{}="C",
        \ar@{-}"B";(7,7)*++!D{1}*{\cdot}*\cir<4pt>{}="D",
        \ar@{-}"C";"D",
        \ar@{-}(-7,0)*++!D{1}*\cir<4pt>{};"A",
        \ar@{-}(-7,7)*++!D{1}*\cir<4pt>{};"C",
        \ar@{} (0,-5) *{( (11)(1))( (11)(1))}
        \ar@{} (0,-10) *{( (11)(1)),21,111}
	\ar@{} (0,-15) *{W^{\text{inv}}=A_{4}}
    \end{xy}\ 
    \begin{xy}
        \ar@{-}*+!R+!D{3}*\cir<4pt>{}="A";(7,0)*+!L+!D{2}*\cir<4pt>{}="B",
        \ar@{-}"A";(0,7)*++!D{2}*\cir<4pt>{}="C",
        \ar@{-}"B";(7,7)*++!D{1}*{\cdot}*\cir<4pt>{}="D",
        \ar@{-}"C";"D",
        \ar@{-}(-7,0)*++!D{2}*\cir<4pt>{}="E";"A",
        \ar@{-}(-14,0)*++!D{1}*\cir<4pt>{};"E"
        \ar@{} (-2,-5) *{( (1)(111))( (2)(2))}
        \ar@{} (-2,-10) *{ (1)(111),22,22}
        \ar@{} (-2,-15) *{ (2)(2),31,1111}
	\ar@{} (-2,-20) *{W^{\text{inv}}=D_{5}}
    \end{xy}\\ 
    \begin{xy}
        (-5,10)\ar@{-}*++!R{1}*{\cdot}*\cir<4pt>{}="A"; (5,10) *++!L{1}*\cir<4pt>{}="B",
        \ar@{-}"A"; (-5,0) *++!R{1}*\cir<4pt>{}="C",
        \ar@{-}"C"; (5,0) *++!L{1}*{\cdot}*\cir<4pt>{}="D",
        \ar@{-}"A"; (0,5) *+!L!D{1}*\cir<4pt>{}="E",
        \ar@{-}"E";"D",
        \ar@{-}"B";"D",
        \ar@{} (0,-5) *{( (1) (1))( (1)(1)(1))}
        \ar@{} (0,-10) *{ (1)(1),11,11,11}
        \ar@{} (0,-15) *{ (1)(1)(1),21,21}
	\ar@{} (0,-20) *{W^{\text{inv}}=(A_{1})^3}
    \end{xy}\
    \begin{xy}
        \ar@{-}*+!R+!D{1}*{\cdot}*\cir<4pt>{}="A";(7,0)*+!L+!D{1}*\cir<4pt>{}="B",
        \ar@{-}"A";(0,7) *++!D{1}*\cir<4pt>{}="C",
        \ar@{-} "C";(7,7) *++!D{1}*{\cdot}*\cir<4pt>{}="D",
        \ar@{-} "A";"D",
        \ar@{-} "B";"D"
        \ar@{} (5,-5) *{( (1))((1))( (1)(1))}
        \ar@{} (5,-10) *{( (1))( (1)),11,11}
	\ar@{} (5,-15) *{((1)(1)) ( (1)),21}
	\ar@{} (5,-20) *{W^{\text{inv}}=A_{1}\times A_{1}}
    \end{xy}
    \end{gather*}
    Here we simply denote the sets $\{x_{a}\mid a\in \mathbb{Z}\}$ and $\{y\}$ 
    by $x_{a}\ (a\in \mathbb{Z})$ and $y$, respectively. 
    For the spectral types of the star shaped graphs, see Remark 
    \ref{starspectral}.
    Here for fundamental spectral types $\mathbf{m}=\mathbf{m}_{\alpha}$
    \[
	    W^{\text{inv}}:=\langle s_{a}\mid 
	    s_{a}(\alpha)=\alpha, 
	    a\in (\mathcal{J}\cup \mathsf{Q}_{0}^{\text{leg}})_{\beta}
	    \rangle\subset W_{\alpha}^{\mathrm{mc}}.
    \]
    Plain circles in the Dynkin diagrams correspond to simple roots
    $c_{a}$ such that $s_{a}(\beta)=\beta$ and dotted circles 
    correspond to $s_{a}(\beta)\neq \beta$.
\end{thm}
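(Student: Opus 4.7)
The plan is to adapt the classification strategy used in Theorem \ref{tame} (the $q(\alpha)=0$ case) to the slightly more involved regime $q(\alpha) = -1$, working throughout in the lifted lattice $\widehat{\mathcal{L}}$. Since by definition the shape of $\beta \in \mathcal{L}$ records the diagrams of all preimages $\Xi_\beta^{-1}(\beta) \subset \widehat{\mathcal{L}}_\beta$, and since Proposition \ref{quotientmap} tells us that $\widehat{\mathcal{L}}$ is a genuine symmetric Kac--Moody root lattice with $\Xi$ a $W^{\mathrm{mc}}$-equivariant isometry, the problem reduces to classifying positive elements $\gamma \in \widehat{\mathcal{L}}_\alpha^{+}$ with $(\gamma,\gamma) = -2$ that lie in the fundamental chamber of $\widehat{\mathcal{L}}_\alpha$ and whose projection $\Xi(\gamma) = \alpha$ is an effective, reduced member of $\tilde F$.

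First I would exploit the identity $(\gamma, \gamma) = \sum_a m_a (\gamma, c_a)$ for $\gamma = \sum_a m_a c_a$. Because every term on the right is $\le 0$ by the fundamental-set hypothesis, the equation $(\gamma,\gamma) = -2$ forces very few inner products $(\gamma, c_a)$ to be nonzero and constrains them to be $-1$ or $-2$ with small coefficients $m_a$. Next I would enumerate the possible connected supports of $\gamma$: by the standard Vinberg-style argument applied to the generalized Cartan matrix $((c_a, c_b))$, these are either extensions of an affine ADE diagram by a single extra vertex (which perturbs the null quadratic form to $-2$), or small-rank configurations whose Gram matrix carries multiple edges coming from the multiplicities $d_i(j, j')$ appearing in (\ref{equ5}); in both cases the diagram must embed into the lifted quiver $\widehat{\mathcal{L}}$.

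For each candidate support, I would solve the combined linear/integer constraints $(\gamma, c_a) \le 0$ together with $q(\gamma) = -1$ to pin down the coefficient vector, allowing for the one-parameter family $\gamma + a\delta$ whenever the support admits an affine null root $\delta$ (which accounts for the families indexed by $a \in \mathbb Z$ in the statement). Then applying $\Xi$ recovers $\alpha$ and its shape in $\mathcal{L}$, while the invariant subgroup $W^{\mathrm{inv}} = \langle s_a \mid s_a(\alpha) = \alpha \rangle$ is computed from the orthogonal complement of $\gamma$ in the support. Effectiveness and reducedness are then imposed via Theorem \ref{DSproblem} and the remark preceding it to discard formal candidates that do not arise from an actual $\mathfrak{M}(\mathbf{B})$, and a matching check against Theorem \ref{finitespec} confirms the list is exhaustive.

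The principal obstacle is purely combinatorial: whereas the $q = 0$ list corresponds to the classical affine ADE classification, the $q = -1$ list pulls in hyperbolic-type subdiagrams and configurations with double or triple edges produced by the multiplicities $d_i(j, j')$, so the enumeration branches into many subcases. Compounding this, the map $\Xi$ is typically not injective (Proposition \ref{quotientmap}(2)), so one must keep track of which distinct lifts $\gamma \in \widehat{\mathcal{L}}_\alpha$ produce the same $\alpha$ and hence fuse into a single shape entry with multiple diagrams, as one sees in several items of the displayed list. The bookkeeping needed to rule out spurious candidates and verify that every exhibited diagram arises from some reduced basic effective $\alpha$ is where the real work lies.
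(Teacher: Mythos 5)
The paper does not actually prove Theorem \ref{class}: it is quoted verbatim as Theorem 10 of \cite{HO}, and your plan is essentially a reconstruction of the strategy of that reference --- pass to the lifted symmetric Kac--Moody lattice $\widehat{\mathcal{L}}_{\alpha}$ via the isometric, $W^{\mathrm{mc}}$-equivariant map $\Xi_{\alpha}$ of Proposition \ref{quotientmap}, use $(\gamma,\gamma)=\sum_{a}m_{a}(\gamma,c_{a})=-2$ together with $(\gamma,c_{a})\le 0$ to bound the supports and coefficients, and then enumerate. So in substance you are following the same route as the source the paper relies on, and the outline is sound as far as it goes.

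One point needs correcting, and one caveat is worth stating. You attribute the $a\in\mathbb{Z}$ families to ``the one-parameter family $\gamma+a\delta$ whenever the support admits an affine null root $\delta$.'' That is not the mechanism: those families are the fibres $\Xi_{\alpha}^{-1}(\alpha)$ of a \emph{single} $\alpha$, parametrized by $\ker\Xi_{\alpha}$ (compare Example \ref{exmp} and the definition of the shape of $\beta\in\mathcal{L}$ as the set of shapes of all its lifts). The kernel elements are indeed isotropic and orthogonal to $\gamma$, since $\Xi$ is an isometry, but they are not positive affine null roots of the support; they have mixed signs. You do identify the non-injectivity of $\Xi$ correctly in your final paragraph, so this is an internal inconsistency rather than a fatal error, but the earlier sentence as written would mislead the enumeration. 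The remaining caveat is that your text is a plan, not a proof: the entire content of Theorem \ref{class} is the exhaustive case analysis (including verifying effectiveness via Theorem \ref{DSproblem} and reducedness, and computing each $W^{\text{inv}}$ from the vertices with $(\gamma,c_{a})=0$), and that work --- which is what \cite{HO} actually carries out --- is deferred rather than done.
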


As well as the case $q=0$, in the case of $q=-2$, 
$\mathfrak{M}_{\lambda}(\mathsf{Q},\alpha)^{\text{dif}}\neq \emptyset$ 
with $\alpha\in \tilde{F}$ such that $q(\alpha)=-2$
has a $W^{\text{inv}}$-action on the parameter space 
\[ \begin{array}{lccc}
			r_{a}\colon&
			\sum_{a\in 
		(\mathcal{J}\cup \mathsf{Q}_{0}^{\text{leg}})_{\alpha}}\mathbb{C}c_{a}&\longrightarrow &	\sum_{a\in 
		(\mathcal{J}\cup \mathsf{Q}_{0}^{\text{leg}})_{\alpha}}\mathbb{C}c_{a}\\
	&\lambda&\longmapsto&r_{a}(\lambda)\end{array}.
	\]
\subsection{Integrable deformations and middle convolutions}
We shall discuss symmetries of integrable deformations coming from 
symmetries of $\mathfrak{M}_{\lambda}(\mathsf{Q},\alpha)^{
\text{dif}}\cong \mathfrak{M}(\mathbf{B})$. When $\mathsf{Q}$ is a 
simply-laced quiver and $I_{\text{irr}}=\{0\}$, Boalch gave a formulation of integrable deformations 
as Hamiltonian systems over quiver varieties and discuss their symmetries
in \cite{Boa1}. The following results might be seen as 
a generalization of his work.

The theorem below  connects
the $W^{\mathrm{mc}}$-action
on $\mathfrak{M}(\mathbf{B})$ and  on integrable deformations.
The following theorem is obtained by Haraoka-Filipuk in \cite{HF} for
Fuchsain cases, 
Boalch in \cite{Bo} for simply-laced $\mathsf{Q}$ with $I_{\text{irr}}=\{0\}$
and Yamakawa 
in \cite{Y2} for general $\mathsf{Q}$.
\begin{thm}[Yamakawa. Corollary 3.17 in \cite{Y2}. cf. Haraoka-Filipuk \cite{HF} and  Boalch  \cite{Bo}]\label{integrablemiddleconv}
	Let $(\mathbf{B}(t))_{t\in\mathbb{T}}$ be an non-resonant admissible family of 
	collections of HTL normal forms which satisfies that  
	that $(B^{(0)}(t))_{\text{irr}}\equiv 0$ and $\mathrm{pr}_{\text{res}}(B^{(0)}(t))$
	is invertible.
	Let $\left((\mathcal{O}_{\mathbb{P}^{1}_{t}}^{n}\nabla_{t})
	\right)_{t\in \mathbb{T}}$ 
	be an admissible integrable family with $(\mathbf{B}(t))_{t\in
		\mathbb{T}}$ and the spectral data 
$(\mathsf{Q},\lambda,\alpha)$.
	Then for each $\mathbf{i}\in \mathcal{J}$, there exists 
	an admissible integrable deformation $
	\left((\mathcal{O}^{n}_{\mathbb{P}^{1}_{t}}\nabla^{\mathbf{i}}_{t})
	\right)_{t
	\in \mathbb{T}}$ with 
	the spectral data $(\mathsf{Q},r_{\mathbf{i}}(\lambda),
	s_{\mathbf{i}}(\alpha))$ such that $\nabla^{\mathbf{i}}_{t}\cong
	\mathrm{mc}_{\mathbf{i}}(\nabla_{t})$ for all $t\in \mathbb{T}$.
\end{thm}
\begin{proof}
	See Corollary 3.17 and (ii) in Remark 3.18 in \cite{Y2}.
\end{proof}
\begin{rem}\normalfont
	In the above theorem, we only discussed middle convolutions.
	Similarly  the relation 
	between Fourier-Laplace transformations and integrable deformations
	are also important and found in \cite{Bo}, \cite{Har}, \cite{Wod}
	and \cite{Yam}.
\end{rem}
\begin{df}\normalfont
	We say that 
	an admissible integral family $\left(
	(\mathcal{O}_{\mathbb{P}^{1}_{t}}^{n},\nabla_{t})
	\right)_{t\in \mathbb{T}}$
	with $(\mathsf{Q},\lambda,\alpha)$ is {\em fundamental} when
	$\alpha\in \tilde{F}\cap \Sigma_{\lambda}^{\text{dif}}$ 
	and $\alpha$ is reduced. 
\end{df}

Then Theorem \ref{reduction}  and Theorem \ref{integrablemiddleconv} show 
the following.
\begin{thm}
	Let $(\mathbf{B}(t))_{t\in\mathbb{T}}$ be as in Theorem \ref{integrablemiddleconv}.
	Let $\left(
	(\mathcal{O}^{n},\nabla_{t})
	\right)_{t\in \mathbb{T}}$ be an admissible integrable family 
	with $(\mathbf{B}(t))_{t\in\mathbb{T}}$ and 
	the spectral data $(\mathsf{Q},\lambda,\alpha)$ where 
	$\alpha\in \Sigma_{\lambda}^{\text{dif}}$ and $q(\alpha)\le 0$.
	Suppose that $\lambda$ is fractional and moreover has a fractional
	reduction.
	Then $\left(
	(\mathcal{O}_{\mathbb{P}^{1}_{t}},\nabla_{t})
	\right)_{t\in \mathbb{T}}$ can be reduced to a fundamental
	admissible integral deformation by a finite iteration of 
	middle convolutions and additions.
\end{thm}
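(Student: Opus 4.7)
The plan is to combine the algebraic reduction on the quiver-variety side provided by Theorem \ref{reduction} with the lift of middle convolutions to integrable deformations established in Theorem \ref{integrablemiddleconv}, stepping along a sequence of reflections supplied by the fractional reduction hypothesis. The assumption $q(\alpha)\le 0$ ensures that Theorem \ref{reduction} yields some $w\in W^{\mathrm{mc}}$ reducing $\alpha$ to an element $\alpha'\in\tilde{F}$, while the existence of a fractional reduction is precisely the assertion that a witnessing factorization $w=s_{a_l}\cdots s_{a_1}$ exists whose partial parameters $\lambda^{(k)}:=r_{a_k}\cdots r_{a_1}(\lambda)$ remain fractional for all $k=1,\ldots,l$.

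Setting $(\lambda^{(0)},\alpha^{(0)},\nabla^{(0)}_t):=(\lambda,\alpha,\nabla_t)$ and $\alpha^{(k)}:=s_{a_k}(\alpha^{(k-1)})$, I would inductively construct admissible integrable deformations $(\nabla^{(k)}_t)_{t\in\mathbb{T}}$ with spectral data $(\mathsf{Q},\lambda^{(k)},\alpha^{(k)})$. When $a_k=\mathbf{i}\in\mathcal{J}$, Theorem \ref{integrablemiddleconv} applies because $\lambda^{(k-1)}$ is fractional, producing $\nabla^{(k)}_t\sim\mathrm{mc}_{\mathbf{i}}(\mathbf{A}^{(k-1)}(t))$ together with the required flat extension. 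When $a_k\in\mathsf{Q}_0^{\text{leg}}$, the leg reflection only permutes two eigenvalues $\xi^{[i,j]}_k,\xi^{[i,j]}_{k+1}$ of a single residue block; such a reshuffling can be implemented at the level of the flat extension by an appropriate addition $\mathrm{Add}^{(i)}$ combined with a relabeling of the HTL normal form, both of which modify $\widehat{\nabla}$ by a closed meromorphic $1$-form on $\mathbb{P}^{1}\times\mathbb{T}$ with $t$-independent residue and therefore preserve integrability.

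After $l$ iterations we obtain an admissible integrable deformation with spectral data $(\mathsf{Q},\lambda',\alpha')$, $\alpha'\in\tilde{F}$. If $\alpha'$ is not yet reduced, the remark following Definition \ref{spectype} supplies an index $i\in\{1,\ldots,p\}$ for which the corresponding HTL normal form $B^{(i)}$ is scalar; applying $\mathrm{Add}^{(i)}_{b^{(i)}(z^{-1})}$ converts $a_i$ into a regular singular point and trivially preserves integrability. A finite iteration of such additions produces a reduced $\alpha''\in\tilde{F}$, and the resulting admissible integrable deformation is, by definition, fundamental.

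The main obstacle will be making the leg-reflection step rigorous as an operation on integrable deformations rather than merely on fibers of the moduli space: one must verify that the formal gauge transformations implementing the eigenvalue permutation of residues at each $t\in\mathbb{T}$ assemble into a holomorphic family over $\mathbb{T}$, so that the flat extension $\widehat{\nabla}$ descends coherently to the new spectral data. This should follow from the non-resonance condition $\lambda_{[i,j,k]}\not\in\mathbb{Z}\backslash\{0\}$ together with the fractionality guaranteed at each intermediate step, which together ensure that the requisite gauges depend holomorphically on the deformation parameter.
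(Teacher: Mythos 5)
Your proposal is correct and follows essentially the same route as the paper, which obtains the statement directly by combining Theorem \ref{reduction} (the $W^{\mathrm{mc}}$-reduction of $\alpha$ to $\tilde{F}$ when $q(\alpha)\le 0$) with Theorem \ref{integrablemiddleconv} (the lift of middle convolutions to integrable deformations under fractionality). You supply considerably more detail than the paper, which gives no written argument beyond citing those two theorems---in particular your treatment of the leg reflections and of the final passage to a reduced $\alpha$ via additions---but the underlying strategy is identical.
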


For an admissible integrable family $\left(
(\mathcal{O}^{n}_{\mathbb{P}^{1}_{t}},\nabla_{t})
\right)_{t\in \mathbb{T}}$
with a spectral data 
$(\mathsf{Q},\lambda,\alpha)$,  we call $\mathbf{m}_{\alpha}$ the 
{\em spectral type} and 
also $\lambda$ the {\em spectral parameter}.
\begin{thm}
	Let us fix an integer $d\in 2\mathbb{Z}_{> 0}$
	There exists only finite spectral types of fundamental
	admissible integrable
	deformations of dimension $d$.
\end{thm}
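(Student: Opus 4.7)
The plan is to reduce the claim directly to the finiteness of fundamental spectral types (Theorem \ref{finitespec}), using the dimension formula for quiver varieties established in Theorem \ref{CB2}.

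First I would recall that the dimension of an admissible deformation with spectral data $(\mathsf{Q},\lambda,\alpha)$ is defined as $\dim \mathfrak{M}_{\lambda}(\mathsf{Q},\alpha) = 2p(\alpha) = 2(1-q(\alpha))$. Hence if the deformation has dimension $d$, the integer $q(\alpha) = 1 - d/2$ is determined by $d$, and the index of rigidity of the associated spectral type satisfies
\[
\mathrm{idx}\,\mathbf{m}_{\alpha} = 2q(\alpha) = 2 - d \in 2\mathbb{Z}_{\le 0},
\]
since $d \in 2\mathbb{Z}_{>0}$.

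Next I would verify that $\mathbf{m}_{\alpha}$ qualifies as a \emph{fundamental spectral type} in the sense of the earlier definition. By hypothesis $(\nabla_t)_{t\in\mathbb{T}}$ is a fundamental admissible integrable deformation, so $\alpha \in \tilde{F} \cap \Sigma_{\lambda}^{\mathrm{dif}}$ and $\alpha$ is reduced. The inclusion $\alpha \in \Sigma_{\lambda}^{\mathrm{dif}}$ together with Theorem \ref{DSproblem} implies $\mathfrak{M}(\mathbf{B}) \cong \mathfrak{M}_{\lambda}(\mathsf{Q},\alpha)^{\mathrm{dif}} \neq \emptyset$, so $\mathbf{m}_{\alpha}$ is effective; the condition $\alpha \in \tilde{F}$ makes it basic; and $\alpha$ reduced makes $\mathbf{m}_{\alpha}$ reduced. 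Hence $\mathbf{m}_{\alpha}$ is fundamental.

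Applying Theorem \ref{finitespec} with the value $q = 2 - d$ then immediately yields finiteness: there are only finitely many fundamental spectral types $\mathbf{m}$ with $\mathrm{idx}\,\mathbf{m} = 2 - d$, and the spectral type of any fundamental admissible integrable deformation of dimension $d$ must lie in this finite list. There is essentially no obstacle here beyond correctly identifying the notions: the substantive combinatorial content is already contained in Theorem \ref{finitespec} (proved in \cite{HO}), and the remaining work is only to match the definition of ``fundamental admissible integrable deformation'' with the definition of ``fundamental spectral type'' via the dimension--index correspondence above.
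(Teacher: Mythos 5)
Your proof is correct and follows essentially the same route as the paper, which simply cites Theorem \ref{finitespec} (together with Theorem \ref{integrablemiddleconv}); your version spells out the definition-matching and the index computation $\mathrm{idx}\,\mathbf{m}_{\alpha}=2q(\alpha)=2-d$ that the paper leaves implicit. In fact your argument shows the finiteness claim needs only Theorem \ref{finitespec}, so no further comment is required.
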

\begin{proof}
	This directly follows from Theorems \ref{finitespec} and 
	\ref{integrablemiddleconv}.
\end{proof}

We have the classification of spectral types of admissible deformations 
of dimension $d=2$ and $4$.
\begin{thm}\label{integrableWeyl}
	Spectral types of fundamental admissible integrable 
	deformations of dimension 
	$d=2, 4$ are listed in Theorem \ref{tame} (resp. Theorem \ref{class})
	for 
	$d=2$ (resp. $d=4$).
	Moreover generic spectral parameters have $W^{\mathrm{mc}}$-actions 
	(resp. $W_{\text{inv}}$-action) for $d=2$ (resp. $d=4$).
	Here we say that a spectral parameter $\lambda\in 
	\mathbb{C}^{\mathsf{Q}_{0}}$ is {\em generic}
	when for any sequence $a_{1},\ldots,a_{l}\in \mathcal{J}
	\cup \mathsf{Q}_{0}^{\text{leg}}$, $\lambda'=r_{a_{l}}\circ 
	\cdots \circ r_{a_{2}}\circ r_{a_{1}}(\lambda)$ satisfies 
	non-resonance condition.
\end{thm}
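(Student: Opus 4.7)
The plan is to split the theorem into three ingredients: (i) the classification of admissible spectral types in dimensions $2$ and $4$; (ii) identification of the appropriate Weyl subgroup that preserves $\alpha$; and (iii) lifting these symmetries to the level of integrable deformations.

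First I would identify which values of $q(\alpha)$ correspond to dimensions $d=2$ and $d=4$. Since $\mathrm{dim}\,\mathfrak{M}_{\lambda}(\mathsf{Q},\alpha)=2p(\alpha)=2(1-q(\alpha))$, the admissible integrable deformations of dimension $d$ with spectral data $(\mathsf{Q},\lambda,\alpha)$ correspond to $\alpha\in\Sigma_{\lambda}^{\text{dif}}$ with $q(\alpha)=1-d/2$. Hence $d=2$ gives $q(\alpha)=0$ (i.e.\ $\mathrm{idx}\,\mathbf{m}_{\alpha}=0$) and $d=4$ gives $q(\alpha)=-1$ (i.e.\ $\mathrm{idx}\,\mathbf{m}_{\alpha}=-2$). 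By Definition of fundamentality and Theorem~\ref{reduction}, any admissible integrable deformation can be brought, via a sequence of middle convolutions $s_{\mathbf{i}}$ and leg-reflections $s_{[i,j,k]}$, to a fundamental one; the genericity assumption on $\lambda$ guarantees that each intermediate $\lambda'$ satisfies the non-resonance condition, and the fractionality consequence of genericity allows Theorem~\ref{integrablemiddleconv} to lift every single middle convolution to an admissible integrable deformation. Thus the classification of spectral types in dimensions $d=2,4$ reduces to the classification of fundamental $\alpha\in\tilde{F}$ with $q(\alpha)=0$ and $q(\alpha)=-1$, which is exactly the content of Theorems~\ref{tame} and~\ref{class}.

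Next I would construct the Weyl group action on the parameter space. For $d=2$, inspection of the shapes listed in Theorem~\ref{tame} together with Proposition~\ref{quotientmap}(3) shows that every element of $W_{\alpha}^{\mathrm{mc}}$ fixes $\alpha$ (either because the shape corresponds to an affine Dynkin diagram whose minimal imaginary root is $W_{\alpha}^{\mathrm{mc}}$-invariant, or because $W_{\alpha}^{\mathrm{mc}}$ is generated by reflections at vertices with label invariant under the reflection). Consequently, for each generator $s_a$ of $W_{\alpha}^{\mathrm{mc}}$, the bijection
\[
s_{a}\colon \mathfrak{M}_{\lambda}(\mathsf{Q},\alpha)^{\text{dif}}\longrightarrow \mathfrak{M}_{r_{a}(\lambda)}(\mathsf{Q},\alpha)^{\text{dif}}
\]
is an automorphism of the underlying stratum, and $r_{a}$ yields a well-defined action on the parameter space of the family of moduli spaces of spectral type $\mathbf{m}_{\alpha}$. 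For $d=4$ one does not have full $W_{\alpha}^{\mathrm{mc}}$-invariance; instead one must single out those reflections with $s_{a}(\alpha)=\alpha$, which is exactly the subgroup $W^{\text{inv}}$ tabulated case-by-case in Theorem~\ref{class}. Genericity of $\lambda$ ensures that $\lambda_{a}\neq 0$ at every intermediate step, so the reflections $s_{a}$ and parameter reflections $r_{a}$ are well-defined throughout.

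Finally I would lift these symmetries to integrable deformations. Given a fundamental admissible integrable deformation $(\nabla_{t})_{t\in\mathbb{T}}$ with spectral data $(\mathsf{Q},\lambda,\alpha)$ and any generator $s_{a}\in W_{\alpha}^{\mathrm{mc}}$ (for $d=2$) or $s_{a}\in W^{\text{inv}}$ (for $d=4$), Theorem~\ref{integrablemiddleconv} produces an admissible integrable deformation $(\nabla_{t}^{a})_{t\in\mathbb{T}}$ with spectral data $(\mathsf{Q},r_{a}(\lambda),s_{a}(\alpha))=(\mathsf{Q},r_{a}(\lambda),\alpha)$ whose fibre at each $t$ agrees with the middle convolution of $\mathbf{A}(t)$. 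Composing these generators realises the full Weyl group action on the family of integrable deformations indexed by the spectral parameter. The main obstacle will be step (ii): verifying for each entry of the $d=4$ list that the stabiliser $W^{\text{inv}}$ really is as claimed, since this is a case-by-case inspection of the star- and loop-shaped diagrams and requires a careful check that the non-invariant simple roots (drawn with dotted circles in Theorem~\ref{class}) are exactly those $c_{a}$ with $s_{a}(\alpha)\neq\alpha$; once this bookkeeping is done, the integrable-deformation statement follows formally from Theorems~\ref{reduction}, \ref{finitespec} and \ref{integrablemiddleconv}.
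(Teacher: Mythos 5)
Your proposal reconstructs essentially the argument the paper intends (the paper states this theorem without a formal proof): the dimension formula $\dim\mathfrak{M}_{\lambda}(\mathsf{Q},\alpha)=2p(\alpha)$ reduces $d=2,4$ to $\mathrm{idx}\,\mathbf{m}_{\alpha}=2q(\alpha)=0,-2$, i.e.\ to the lists of Theorems \ref{tame} and \ref{class}; the invariance of $\alpha$ under $W^{\mathrm{mc}}_{\alpha}$ (for $d=2$) or under the stabiliser $W^{\text{inv}}$ (for $d=4$) turns the reflections $s_{a}$, $r_{a}$ into an action on the family indexed by the spectral parameter; and Theorem \ref{integrablemiddleconv} lifts this to integrable deformations. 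Your identification $q(\alpha)=-1$ for $d=4$ is the correct reading (the paper's remark after Theorem \ref{class} writes $q(\alpha)=-2$, conflating $q(\alpha)$ with $\mathrm{idx}=2q(\alpha)$), and the case-by-case check of $s_{a}(\alpha)=\alpha$ against the dotted and plain circles is exactly what the paper relies on.

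The one step that does not close as written is your claim that fractionality is a consequence of genericity. Genericity, as defined in the statement, is the non-resonance condition $\lambda'_{[i,j,k]}\notin\mathbb{Z}\setminus\{0\}$ at the leg vertices along every reflection orbit, whereas fractionality is the condition $\sum_{i\in I_{\text{irr}}}\lambda'_{[i,j_{i}]}\notin\mathbb{Z}$ on the components at the vertices $[i,j]$; neither implies the other. Since Theorem \ref{integrablemiddleconv} is stated only for fractional $\lambda$, your lift of the middle convolutions $s_{\mathbf{i}}$, $\mathbf{i}\in\mathcal{J}$, to the level of integrable deformations is not justified for a merely generic $\lambda$: for instance $\lambda_{\mathbf{i}}$ could vanish, or be a nonzero integer, while every leg component stays non-resonant under all reflections. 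Likewise genericity does not give $\lambda_{a}\neq 0$; the paper handles that degeneration by the convention $s_{a}=r_{a}=\mathrm{id}$ rather than by excluding it. To make your argument watertight you would either need to strengthen the genericity hypothesis so that it includes fractionality along the whole reflection orbit, or restrict the lifted action to the leg reflections $s_{[i,j,k]}$ together with those $s_{\mathbf{i}}$ for which the hypothesis of Theorem \ref{integrablemiddleconv} actually holds. Apart from this point your three-step decomposition matches the paper's implicit argument.
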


In Theorems \ref{tame} and \ref{class}, we saw the cases where several 
different spectral types correspond to a same shape. For example,
\[ \begin{xy}
        (-5,10)\ar@{-}*++!R{1}*{\cdot}*\cir<4pt>{}="A"; (5,10) *++!L{1}*\cir<4pt>{}="B",
        \ar@{-}"A"; (-5,0) *++!R{1}*\cir<4pt>{}="C",
        \ar@{-}"C"; (5,0) *++!L{1}*{\cdot}*\cir<4pt>{}="D",
        \ar@{-}"A"; (0,5) *+!L!D{1}*\cir<4pt>{}="E",
        \ar@{-}"E";"D",
        \ar@{-}"B";"D",
        \ar@{} (0,-5) *{( (1) (1))( (1)(1)(1))}
        \ar@{} (0,-10) *{ (1)(1),11,11,11}
        \ar@{} (0,-15) *{ (1)(1)(1),21,21}
    \end{xy}.
\]
Then we identify two spectral types of dimension $2$ or $4$ 
if they have the same shape. This identification might be justified by 
Boalch's simply-laced isomonodromy theory \cite{Bo}. 
By Theorems \ref{tame} and \ref{class}, 
we can check that the identification happens only when the corresponding 
quiver $\mathsf{Q}$ in the spectral data $(\mathsf{Q},\lambda,\alpha)$ 
is simply-laced and $I_{\text{irr}}=\{0\}$. 
Thus integrable deformations with spectral types of the same shape
are isomorphic by Theorem 1.1 in \cite{Bo} in the sense of the paper \cite{Bo}.

In \cite{KNS}, Kawakami, Nakamura and Sakai considered  
isomonodromic deformations of linear differential equations 
which obtained by the confluent process 
from Fuchsian differential equations with $4$ accessory parameters 
classified by Oshima in \cite{O}.
And they gave explicit Hamiltonian equations of the isomonodromic deformations
after Sakai's computation in the Fuchsian cases (see \cite{Sak}).
Then under the above identification of spectral types, 
Theorem \ref{class}
shows that  the list of 
spectral types appeared in their paper \cite{KNS}
is the complete list of fundamental spectral types of dimension 4.

\begin{thm}
	Under the above identification of spectral types, 
	if we exclude the spectral types corresponding to 
	differential equations which  have only 3 regular singular points
	and no other singularities, then the 
	list of spectral types appeared in Section 1.3 of \cite{KNS} 
	is the complete list of spectral
	types of fundamental integrable deformations of dimension 4.
\end{thm}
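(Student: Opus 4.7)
My plan is to derive this result as a direct consequence of the classification in Theorem \ref{class} combined with the identification justified by Boalch's simply-laced isomonodromy theorem.

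First, I would recall that the dimension of the moduli space associated to a spectral data $(\mathsf{Q},\lambda,\alpha)$ is $2p(\alpha)$, so dimension $d=4$ corresponds to $p(\alpha)=2$, equivalently $q(\alpha)=-1$ and $\mathrm{idx}\,\mathbf{m}_{\alpha}=-2$. Thus by the preceding discussion, every spectral type of a fundamental admissible integrable deformation of dimension $4$ appears in the explicit finite list of Theorem \ref{class}. Any non-fundamental spectral type of dimension $4$ is reduced to a fundamental one by a finite iteration of middle convolutions and additions via Theorems \ref{reduction} and \ref{integrablemiddleconv}, so it suffices to compare the fundamental list with the list in \cite{KNS}.

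Second, I would invoke the identification of spectral types having the same shape. As noted just before the statement, this identification occurs exactly when the quiver $\mathsf{Q}$ is simply-laced and $I_{\text{irr}}=\{0\}$, and in those cases Boalch's Theorem 1.1 in \cite{Bo} provides an isomorphism between the corresponding integrable deformations. Hence after the identification, the list of Theorem \ref{class} collapses into a strictly shorter list indexed by shapes.

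Third, I would remove the shapes that correspond to differential equations with exactly three regular singular points and no other singularities, i.e., the star-shaped diagrams of Theorem \ref{class} that arise from a Fuchsian spectral data with $p=2$, $I_{\text{irr}}=\{0\}$ and $k_0=k_1=k_2=1$ (the remaining star shapes with $\ge 4$ legs or with a leg coming from an irregular factor must be kept). This is a finite, case-by-case pruning of the list.

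Finally, I would perform a shape-by-shape comparison between the pruned list and the list in Section 1.3 of \cite{KNS}; since both lists are finite and explicit, this last step is purely combinatorial bookkeeping. The main obstacle is this last verification: I must check carefully that every surviving shape in Theorem \ref{class} actually appears in \cite{KNS} (via their confluence procedure from Oshima's four-accessory-parameter Fuchsian equations in \cite{O}) and, conversely, that no entry of \cite{KNS} falls outside the pruned list. The delicate point is tracking how the identification of spectral types by shape, together with the Boalch isomorphism, matches the confluence diagrams used in \cite{KNS}, so that the two enumerations are seen to be literally the same.
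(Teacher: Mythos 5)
Your proposal is correct and follows essentially the same route as the paper: the paper's entire proof is the instruction to compare the list in \cite{KNS} with the shapes in Theorem \ref{class}, and your preliminary steps (dimension $4$ $\Leftrightarrow$ $p(\alpha)=2$ $\Leftrightarrow$ $\mathrm{idx}\,\mathbf{m}=-2$, reduction to fundamental types, identification by shape, pruning the three-regular-point cases) merely make explicit the context the paper has already set up before stating the theorem. The only work is the finite combinatorial comparison you describe in your last step, which is exactly what the paper does.
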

\begin{proof}
	Compare the list of spectral types in \cite{KNS} and 
	the shapes listed in Theorem \ref{class}.
\end{proof}

Moreover Theorem \ref{integrableWeyl} assures that 
integrable deformations considered in \cite{KNS} have $W^{\text{inv}}$-
symmetries listed in Theorem \ref{class}.

\end{document}